\definecolor{red}{rgb}{1.0,0.0,0.0}
\definecolor{blu}{rgb}{0.0,0.0,1.0}
\definecolor{gre}{rgb}{0.03,0.50,0.03}
\newtheorem{theorem}{Theorem}[section]
\newtheorem{lemma}[theorem]{Lemma}
\newtheorem{corollary}[theorem]{Corollary}
\newtheorem{remark}[theorem]{Remark}
\newtheorem{definition}[theorem]{Definition}
\let\Section=\section
\def\section{\setcounter{equation}{0}\Section}
\def\Swiech
\def\SWIECH
\begin{document}

\title{{\bf Perron's method for nonlocal fully nonlinear equations
}}

\author{
    \textsc{Chenchen Mou}\\
    \textit{Department of Mathematics, UCLA}\\
\textit{
Los Angeles, CA 90095, U.S.A.}\\
 \textit{E-mail: muchenchen@math.ucla.edu}   
  }
\date{}

\maketitle

\begin{abstract}
This paper is concerned with existence of viscosity solutions of non-translation invariant nonlocal fully nonlinear equations. We construct a discontinuous viscosity solution of such nonlocal equation by Perron's method. If the equation is uniformly elliptic, we prove the discontinuous viscosity solution is H\"older continuous and thus it is a viscosity solution.
\end{abstract}

\vspace{.2cm}
\noindent{\bf Keywords:} viscosity solution; integro-PDE; Hamilton-Jacobi-Bellman-Isaacs equation; Perron's method; weak Harnack inequality.

\vspace{.2cm}
\noindent{\bf 2010 Mathematics Subject Classification}: 35D40, 35J60, 35R09, 45K05, 47G20, 49N70. 
\section{Introduction}
In this paper, we investigate existence of a viscosity solution of
\begin{equation}\label{eq:gennon}
\left\{\begin{array}{ll} I(x,u(x),u(\cdot))=0,\quad \text{in $\Omega$,}\\
\qquad\qquad\quad\,\,\,\,\, u=g,\quad \text{in $\Omega^c$},
\end{array}
  \right.
\end{equation}
where $\Omega$ is a bounded domain in $\mathbb R^n$, $I$ is a non-translation invariant nonlocal operator and $g$ is a bounded continuous function in $\mathbb R^n$. 

An important example of $(\ref{eq:gennon})$ is the Dirichlet problem for nonlocal Bellman-Isaacs equations, i.e.,
\begin{equation}\label{eq:belisa}
\left\{\begin{array}{ll} \sup_{a\in\mathcal{A}}\inf_{b\in\mathcal{B}}\{-I_{ab}[x,u]+b_{ab}(x)\cdot \nabla u(x)+c_{ab}(x)u(x)+f_{ab}(x)\}=0,
\quad \text{in $\Omega$},\\
\qquad\qquad\qquad\qquad\qquad\qquad\quad\qquad\qquad\qquad\qquad\qquad\qquad\quad\quad\,\, u=g,\quad \text{in $\Omega^c$},
\end{array}
  \right.
\end{equation}
where $\mathcal{A}, \mathcal{B}$ are two index sets, $b_{ab}:\mathbb R^n\to\mathbb R^n$, $c_{ab}:\mathbb R^n\to \mathbb R^+$, $f_{ab}:\mathbb R^n\to\mathbb R$ are uniformly continuous functions and $I_{ab}$ is a L\'evy operator. If the L\'evy measures are symmetric and absolutely continuous with respect to the Lebesgue measure, then they can be represented as
\begin{equation}\label{eq:levope}
I_{ab}[x,u]:=\int_{\mathbb R^n}[u(x+z)-u(x)]K_{ab}(x,z)dz,
\end{equation}
where $\{K_{ab}(x,\cdot);x\in\Omega,a\in\mathcal{A},b\in\mathcal{B}\}$ are kernels of L\'evy measures satisfying
\begin{equation}\label{eq:levmea}
\int_{\mathbb R^n}\min\{|z|^2,1\}K_{ab}(x,z)dz<+\infty\quad \text{for all $x\in\Omega$}.
\end{equation}
In fact, we will not assume our L\'evy measures to be symmetric in the following sections. 

Existence of viscosity solutions has been well established for the Dirichlet problem for integro-differential equations by Perron's method when the equations satisfy the comparison principle. In \cite{BI}, G. Barles and C. Imbert studied the comparison principle for degenerate second order integro-differential equations assuming the nonlocal operators are of L\'evy-It\^o type and the equations satisfy the coercive assumption. Then G. Barles, E. Chasseigne and C. Imbert obtained existence of viscosity solutions for such integro-differential equations by Perron's method in \cite{BCI0}. L. A. Caffarelli and L. Silvestre proved, in Section 5 of \cite{LL1}, the comparison principle for uniformly elliptic translation invariant integro-differential equations where the nonlocal operators are of L\'evy type. Then existence of viscosity solutions follows, if suitable barriers can be constructed, by Perron's method. Later H. Chang Lara and G. Davila extended the comparison and existence results of \cite{LL1} to parabolic equations, see Section 3 in \cite{CD1,CD4}. The existence for $(\ref{eq:gennon})$ when $I$ is a non-translation invariant nonlocal operator is much more difficult to tackle since we do not have a good comparison principle, see \cite{MS}. In \cite{MS}, the authors proved comparison assuming that either a viscosity subsolution or a supersolution is more regular.  To our knowledge, the only available results for existence of solutions for non-translation invariant equations are the following. D. Kriventsov studied, in Section 5 of \cite{Kri}, existence of viscosity solutions of some uniformly elliptic nonlocal equations. In Section 4 of \cite{Se1}, J. Serra proved existence of viscosity solutions of uniformly elliptic nonlocal Bellman equations. H. Chang Lara and D. Kriventsov extended existence results in \cite{Kri} to a class of uniformly parabolic nonlocal equations, see Section 5 of \cite{CK}. In all these proofs, the authors used fixed point arguments. In \cite{AT}, O. Alvarez and A. Tourin obtained existence of viscosity solutions of degenerate parabolic nonlocal equations by Perron's method with a restrictive assumption that the L\'evy measures are bounded. The boundedness of L\'evy measures allows them to obtain the comparison principle. The reader can consult \cite{CIL,Is,Is1,Ko} for Perron's method for viscosity solutions of fully nonlinear partial differential equations. 

The probability literature on existence of viscosity solutions of nonlocal Bellman-Isaacs equations is enormous. It is well-known that Bellman-Isaacs equations arise when people study the differential games, where the equations carry information about the value and strategies of the games. The probabilists represent viscosity solutions of nonlocal Bellman-Isaacs equations as value functions of certain stochastic differential games with jump diffusion via the dynamic programming principle. However, mostly in the probability literature, the nonlocal terms of nonlocal Bellman-Isaacs equations are of L\'evy-It\^o type and $\Omega$ is the whole space $\mathbb R^n$. We refer the reader to \cite{BBP10,B10,B11,BHL10,I10,KP10,KS10,OS10,P10,S10,S11,SZ10} for stochastic representation formulas for viscosity solutions of nonlocal Bellman-Isaacs equations.

In Section 3, we adapt to the nonlocal case the approach from \cite{Is,Is1,Ko} for obtaining existence of a discontinuous viscosity solution $u$ of $(\ref{eq:gennon})$ without using the comparison principle. For applying Perron's method, we need to assume that there exist a continuous viscosity subsolution and a continuous supersolution of $(\ref{eq:gennon})$ and both satisfy the boundary condition. Since $(\ref{eq:gennon})$ involves the nonlocal term, the proof of the existence is more delicate than the PDE case.

In Section 4, we obtain a H\"older estimate for the discontinuous viscosity solution of $(\ref{eq:gennon})$ constructed by Perron's method assuming the equation is uniformly elliptic. In most of the literature, the nonlocal operator $I$ is assumed to be uniformly elliptic with respect to a class of linear nonlocal operators of form $(\ref{eq:levope})$ with kernels $K$ satisfying
\begin{equation}\label{eq:uell1}
(2-\sigma)\frac{\lambda}{|z|^{n+\sigma}}\leq K(x,z)\leq (2-\sigma)\frac{\Lambda}{|z|^{n+\sigma}},
\end{equation}
where $0<\lambda\leq \Lambda$. Various of regularity results were obtained in recent year under the above uniform ellipticity such as \cite{LL1,LL2,LL3,CD1,CD2,CD4,CD3,CK,DK,TJ1,TJ,Kri,Se1,Se,L1,L3} for both elliptic and parabolic integro-differential equations. In this paper, we follow \cite{SS} to assume a much weaker uniform ellipticity. Roughly speaking, we let $I$ be uniformly elliptic with respect to a larger class of linear nonlocal operators where the kernels $K$ satisfy the right hand side of $(\ref{eq:uell1})$ in an integral sense and the left hand side of that in a symmetric subset of each annulus domain with positive measure. The main tool we use is the weak Harnack inequality obtained in \cite{SS}. With the weak Harnack inequality, we are able to prove the oscillation between the upper and lower semicontinuous envelope of the discontinuous viscosity solution $u$ in the ball $B_r$ is of order $r^\alpha$ for some $\alpha>0$ and any small $r>0$. This proves that $u$ is H\"older continuous and thus it is a viscosity solution of $(\ref{eq:gennon})$. Recently, L. Silvestre applied the regularity for nonlocal equations under this weak ellipticity to obtain the regularity for the homogeneous Boltzmann equation without cut-off, see \cite{L2}. We also want to mention that M. Kassmann, M. Rang and R. Schwab studied H\"older regularity for a class of integro-differential operators with kernels which are positive along some given rays or cone-like sets, see \cite{KRS}.

To complete the existence results, we construct continuous sub/supersolutions in both uniformly elliptic and degenerate cases in Section 5. In the uniformly elliptic case, we follow the idea of \cite{XJ1} to construct appropriate barrier functions. We then use them to construct a subsolution and a supersolution which satisfy the boundary condition. The weak uniform ellipticity and the lower order terms of $I$ make the proofs more involved. With all these ingredients in hand, we can conclude one of the main results in this manuscript that $(\ref{eq:gennon})$ admits a viscosity solution if $I$ is uniformly elliptic, see Theorem $\ref{mainthm1}$ in Section 5.1. This main result generalizes nearly all the previous existence results for uniformly elliptic integro-differential equations. In the degenerate case,  it is natural to construct a sub/supersolution only for $(\ref{eq:belisa})$ since we have little information about the nonlocal operator $I$. Moreover, we need to assume the nonlocal Bellman-Isaacs equation in $(\ref{eq:belisa})$ satisfies the coercive assumption, i.e., $c_{ab}\geq\gamma$ for some $\gamma>0$. The coercive assumption is often made to study uniqueness, existence and regularity of viscosity solutions of degenerate elliptic PDEs and integro-PDEs, see \cite{BCI0,BI,CIL,Is,Is1,IL,jk1,M,MS}. In Section 5.2, we obtain a subsolution and a supersolution which satisfy the boundary condition in the degenerate case. The difficulty here lies in giving a degenerate assumption on the kernels which allows us to construct barrier functions. Roughly speaking, we only need to assume that the kernels $K_{ab}(x,\cdot)$ are non-degenerate in the outer-pointing normal direction of the boundary for the points $x$ which are sufficiently close to the boundary. That means we allow our kernels $K_{ab}$ to be degenerate in the whole domain. Then we can conclude the second main result, the existence of a discontinuous viscosity solution of $(\ref{eq:belisa})$, given in Theorem $\ref{thm:disvis}$. If the comparison principle holds for $(\ref{eq:belisa})$, we obtain the discontinuous viscosity solution is a viscosity solution. In the end, we want to notice that our method could be adapted to the nonlocal parabolic equations for obtaining the corresponding existence results.

\section{Notation and definitions}

We write $B_\delta$ for the open ball centered at the origin with radius $\delta>0$ and $B_{\delta}(x):=B_\delta+x$. We set $\Omega_{\delta}:=\{x\in\Omega; {\rm dist}(x,\partial\Omega)>\delta\}$ for $\delta>0$. For each
non-negative integer $r$ and $0<\alpha\le1$, we denote by
$C^{r,\alpha}(\Omega)$ ($C^{r,\alpha}(\bar\Omega)$) the subspace of
$C^{r,0}(\Omega)$ ($C^{r,0}(\bar\Omega)$) consisting functions whose
$r$th partial derivatives are locally (uniformly) $\alpha$-H\"older
continuous in $\Omega$.
For any $u\in
C^{r,\alpha}(\bar\Omega)$, where $r$ is a non-negative integer and
$0\le\alpha\le 1$, define
\[
[u]_{r, \alpha; \Omega}:=\left\{\begin{array}{ll}
\sup_{x\in\Omega,|j|=r}|\partial^{j}u(x)|,&\hbox{if}\, \alpha=0;\\
\sup_{x, y\in \Omega, x\not
=y,|j|=r}\frac{|\partial^{j}u(x)-\partial^{j}u(y)|}{|x-y|^{\alpha}},
&\hbox{if}\, \alpha>0,\end{array}\right.
\]
and
\[
\|u\|_{C^{r, \alpha}(\bar \Omega)}=\left\{\begin{array}{ll}
\sum_{j=0}^{r}[u]_{j, 0, \Omega}, &\hbox{if}\, \alpha=0;\\
\|u\|_{C^{r,0}(\bar \Omega)}+[u]_{r, \alpha; \Omega}, &\hbox{if}\,
\alpha>0.\end{array}\right.
\]
For simplicity, we use the notation
$C^\beta(\Omega)$ ($C^{\beta}(\bar\Omega)$), where $\beta>0$, to
denote the space $C^{r,\alpha}(\Omega)$
($C^{r,\alpha}(\bar\Omega)$), where $r$ is the largest integer
smaller than $\beta$ and $\alpha=\beta-r$. The set $C_b^{\beta}(\Omega)$ consist of functions from $C^{\beta}(\Omega)$ which are bounded. We write $USC(\mathbb R^n)$ ($LSC(\mathbb R^n)$) for the space of upper (lower) semicontinuous function in $\mathbb R^n$. 



We will give a definition of viscosity solutions of $(\ref{eq:gennon})$. We first state the general assumptions on the nonlocal operator $I$ in $(\ref{eq:gennon})$. For any $\delta>0$, $r,s\in\mathbb R$, $x,x_k\in\Omega$, $\varphi,\varphi_k,\psi\in C^{2}(B_\delta(x))\cap L^{\infty}(\mathbb R^n)$, we assume:
\begin{itemize}
\item[({\rm A0})] The function $(x,r)\to I(x,r,\varphi(\cdot))$ is continuous in $B_\delta(x)\times\mathbb R$.
\item[({\rm A1})] If $x_k\to x$ in $\Omega$, $\varphi_k\to \varphi$ a.e. in $\mathbb R^n$, $\varphi_k\to \varphi$ in $C^{2}(B_{\delta}(x))$ and $\{\varphi_k\}_k$ is uniformly bounded in $\mathbb R^n$, then
\begin{equation*}
I(x_k,r,\varphi_k(\cdot))\to I(x,r,\varphi(\cdot)).
\end{equation*}
\item[({\rm A2})] If $r\leq s$, then $I(x,r,\varphi(\cdot))\leq I(x,s,\varphi(\cdot))$.
\item[({\rm A3})] For any constant $C$, $I(x,r,\varphi(\cdot)+C)=I(x,r,\varphi(\cdot))$.
\item[({\rm A4})] If $\varphi$ touches $\psi$ from above at $x$, then $I(x,r,\varphi(\cdot))\leq I(x,r,\psi(\cdot))$.
\end{itemize}
\begin{remark}
If $I$ is uniformly elliptic and satisfies $({\rm A0})$, $({\rm A2})$, then $({\rm A0})$-$({\rm A4})$ hold for $I$. See Lemma $\ref{lem:equiv}$.
\end{remark}
\begin{remark}\label{rem:form}
The nonlocal operator $I$ in \cite{SS} has only two components, i.e., $(x,\varphi)\to I(x,\varphi(\cdot))$. Here we let our nonlocal operator $I$ have three components and assume $({\rm A2})$-$({\rm A3})$ hold. It is because that we want to let $I$ include the left hand side of the nonlocal Bellman-Isaacs equation in $(\ref{eq:belisa})$ and, moreover, want to describe the following two properties 
\begin{equation*}
-I_{ab}[x,\varphi+C]+b_{ab}(x)\cdot \nabla(\varphi+C)(x)=-I_{ab}[x,\varphi]+b_{ab}(x)\cdot \nabla\varphi(x),
\end{equation*}
\begin{equation*}
c_{ab}(x)r\leq c_{ab}(x)s\quad\text{if $r\leq s$}
\end{equation*}
in abstract forms.
\end{remark}
\begin{remark}
The left hand side of the nonlocal Bellman-Isaacs equation in $(\ref{eq:belisa})$ satisfies {\rm (A0)}-{\rm (A4)} if $(\ref{eq:levmea})$ holds and its coefficients $K_{ab}$, $b_{ab}$, $c_{ab}$ and $f_{ab}$ are uniformly continuous with respect to $x$ in $\Omega$, uniformly in $a\in\mathcal{A}$, $b\in \mathcal{B}$. See \cite{ns} for when the nonlocal operator $I$ has a min-max structure. 
\end{remark}
Throughout the paper, we always assume the nonlocal operator $I$ satisfies {\rm (A0)}-{\rm (A4)}.

\begin{definition}\label{de:vis1}
A bounded function $u\in USC(\mathbb R^n)$ is a viscosity subsolution of $I=0$ in $\Omega$ if whenever $u-\varphi$ has a maximum over $\mathbb R^n$ at $x\in\Omega$ for $\varphi\in C_b^2(\mathbb R^n)$, then
\begin{equation*}
I\left(x,u(x),\varphi(\cdot)\right)\leq 0.
\end{equation*}
A bounded function $u\in LSC(\mathbb R^n)$ is a viscosity supersolution of $I=0$ in $\Omega$ if whenever $u-\varphi$ has a minimum over $\mathbb R^n$ at $x\in\Omega$ for $\varphi\in C_b^2(\mathbb R^n)$, then
\begin{equation*}
I\left(x,u(x),\varphi(\cdot)\right)\geq 0.
\end{equation*}
A bounded function $u$ is a viscosity solution of $I=0$ in $\Omega$ if it is both a viscosity subsolution and viscosity supersolution
of $I=0$ in $\Omega$.
\end{definition}
\begin{remark}
In Definition $\ref{de:vis1}$, all the maximums and minimums can be replaced by strict maximums and minimums.
\end{remark}
\begin{definition}\label{def:visbou}
A bounded function $u$ is a viscosity subsolution of $(\ref{eq:gennon})$ if $u$ is a viscosity subsolution of $I=0$ in $\Omega$ and $u\leq g$ in $\Omega^c$. 
A bounded function $u$ is a viscosity supersolution of $(\ref{eq:gennon})$ if $u$ is a viscosity supersolution of $I=0$ in $\Omega$ and $u\geq g$ in $\Omega^c$. A bounded function $u$ is a viscosity solution of $(\ref{eq:gennon})$ if $u$ is a viscosity subsolution and supersolution of $(\ref{eq:gennon})$.
\end{definition}
We will use the following notations: if $u$ is a function on $\Omega$, then, for any $x\in\Omega$,
\begin{equation*}
u^*(x)=\lim_{r\to 0}\sup\{u(y); y\in\Omega\,\,\text{and}\,\,|y-x|\leq r\},
\end{equation*}
\begin{equation*}
u_*(x)=\lim_{r\to 0}\inf\{u(y); y\in\Omega\,\,\text{and}\,\,|y-x|\leq r\}.
\end{equation*}
One calls $u^*$ the upper semicontinuous envelope of $u$ and $u_*$ the lower semicontinuous envelope of $u$.

We then give a definition of discontinuous viscosity solutions of $(\ref{eq:gennon})$.
\begin{definition}\label{de:disvis}
A bounded function $u$ is a discontinuous viscosity subsolution of $(\ref{eq:gennon})$ if $u^*$ is a viscosity subsolution of $(\ref{eq:gennon})$.
A bounded function $u$ is a discontinuous viscosity supersolution of $(\ref{eq:gennon})$ if $u_*$ is a viscosity supersolution of  $(\ref{eq:gennon})$.
A function $u$ is a discontinuous viscosity solution of $(\ref{eq:gennon})$ if it is both a discontinuous viscosity subsolution and a discontinuous viscosity supersolution of $(\ref{eq:gennon})$.
\end{definition}
\begin{remark}
If $u$ is a discontinuous viscosity solution of $(\ref{eq:gennon})$ and $u$ is continuous in $\mathbb R^n$, then $u$ is a viscosity solution of $(\ref{eq:gennon})$.
\end{remark}

\section{Perron's method}\label{sec:per}

In this section, we obtain existence of a discontinuous viscosity solution of $(\ref{eq:gennon})$ by Perron's method. We remind you that $I$ satisfies {\rm (A0)}-{\rm (A4)}.
\begin{lemma}\label{lem:sta}
Let $\mathcal{F}$ be a family of viscosity subsolutions of $I=0$ in $\Omega$. Let $w(x)=\sup\{u(x):u\in\mathcal{F}\}$ in $\mathbb R^n$and assume that $w^*(x)<\infty$ for all $x\in\mathbb R^n$. Then $w$ is a discontinuous viscosity subsolution of $I=0$ in $\Omega$.
\end{lemma}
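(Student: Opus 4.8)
The plan is to show that $w^*$ is a viscosity subsolution of $I=0$ in $\Omega$, i.e., that whenever $w^* - \varphi$ has a maximum over $\mathbb{R}^n$ at a point $x_0 \in \Omega$ for some $\varphi \in C_b^2(\mathbb{R}^n)$, then $I(x_0, w^*(x_0), \varphi(\cdot)) \le 0$. By the remark following Definition \ref{de:vis1}, I may assume the maximum is strict. First I would reduce to a local statement: replacing $\varphi$ by $\varphi + (w^*(x_0) - \varphi(x_0))$ and using (A3), I may assume $\varphi(x_0) = w^*(x_0)$, so $w^* \le \varphi$ on $\mathbb{R}^n$ with equality at $x_0$, and the maximum is strict.

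The core step is a standard diagonalization. By definition of $w^*$ and $w$, there exist points $y_k \to x_0$ and subsolutions $u_k \in \mathcal{F}$ with $u_k(y_k) \to w^*(x_0)$. Since each $u_k \in USC(\mathbb{R}^n)$ and $u_k \le w \le w^* \le \varphi$, and since $u_k - \varphi \le 0$ with $(u_k - \varphi)(y_k) \to 0$, the function $u_k - \varphi$ attains its maximum over $\mathbb{R}^n$ at some point $x_k$ (here I need boundedness/upper semicontinuity together with a coercivity-type control: since $\varphi$ is bounded and $u_k$ bounded, and more simply one can localize by noting $u_k - \varphi$ is USC and $\mathbb{R}^n$ decomposes as the closure of a ball where the sup is nearly attained plus its complement where $u_k - \varphi$ is small — actually since $\varphi - w^*$ has a strict minimum at $x_0$, near-maximizers of $u_k - \varphi$ are forced toward $x_0$). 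I would then argue $x_k \to x_0$ and $(u_k - \varphi)(x_k) \to 0$ via the strict-maximum argument: if $x_k$ stayed in a region bounded away from $x_0$, then $u_k(x_k) - \varphi(x_k) \le w^*(x_k) - \varphi(x_k)$, whose $\limsup$ is $\le \sup_{|x-x_0|\ge \rho}(w^*(x) - \varphi(x)) < 0$, contradicting $(u_k-\varphi)(x_k) \ge (u_k-\varphi)(y_k) \to 0$. Hence $x_k \to x_0$, and in particular $x_k \in \Omega$ for $k$ large.

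Since $u_k$ is a viscosity subsolution and $u_k - \varphi$ has a maximum at $x_k \in \Omega$ with test function $\varphi \in C_b^2(\mathbb{R}^n)$, Definition \ref{de:vis1} gives $I(x_k, u_k(x_k), \varphi(\cdot)) \le 0$ for all large $k$. Now I pass to the limit. Set $r_k := u_k(x_k)$; then $r_k \to w^*(x_0)$ (since $(u_k-\varphi)(x_k) \to 0$ and $\varphi(x_k) \to \varphi(x_0) = w^*(x_0)$). The test function $\varphi$ is fixed, so the convergence required in (A1) — namely $x_k \to x_0$, $\varphi \to \varphi$ (trivially) in $C^2(B_\delta(x_0))$ and uniformly bounded — holds, and combined with the continuity in $r$ from (A0) I obtain
\[
I(x_0, w^*(x_0), \varphi(\cdot)) = \lim_{k\to\infty} I(x_k, r_k, \varphi(\cdot)) \le 0.
\]
This proves $w^*$ is a viscosity subsolution of $I=0$ in $\Omega$, hence $w$ is a discontinuous viscosity subsolution. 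The main obstacle is the argument that the maximizers $x_k$ of $u_k - \varphi$ exist over all of $\mathbb{R}^n$ and converge to $x_0$; unlike the PDE case where one can simply work in a compact neighborhood, here the nonlocal operator forces us to test with a function defined and bounded on all of $\mathbb{R}^n$, so the globalization of the maximum and the use of the strict maximum to pin down $x_k$ must be handled with some care, using only the upper semicontinuity and boundedness of the $u_k$ together with the fact that $\varphi - w^*$ has a strict minimum at $x_0$.
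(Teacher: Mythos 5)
There is a genuine gap, and it sits exactly at the point you yourself flag as the main obstacle. Your localization rests on the claim that, because $w^*-\varphi$ has a strict maximum at $x_0$ over $\mathbb R^n$, one has $\sup_{|x-x_0|\ge\rho}\left(w^*(x)-\varphi(x)\right)<0$. This does not follow: a strict maximum only gives $w^*(x)-\varphi(x)<0$ pointwise for $x\ne x_0$, and on the non-compact set $\{|x-x_0|\ge\rho\}$ the supremum of a negative USC function can still equal $0$, approached as $|x|\to\infty$ (e.g. $w^*\equiv 0$, $\varphi(x)=|x-x_0|^2/(1+|x-x_0|^4)$, which is an admissible $C^2_b$ test function). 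Consequently neither the existence of a global maximizer $x_k$ of $u_k-\varphi$ over $\mathbb R^n$ nor the convergence $x_k\to x_0$ is established, and the definition of viscosity subsolution in this paper requires a \emph{global} maximum over $\mathbb R^n$, so this cannot be waved away by working in a compact neighborhood.

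The paper's proof supplies precisely the missing device: it replaces $\varphi$ by a uniformly bounded sequence $\varphi_m\in C^2(\mathbb R^n)$ with $\varphi_m=\varphi$ on $B_1(x_0)$, $\varphi_m\ge\varphi$, $\varphi_m\to\varphi$ pointwise, and $\sup_{B_2^c(x_0)}\{w^*-\varphi_m\}\le -\tfrac1m$. With this quantitative negativity outside a fixed ball, the supremum of $u_\epsilon-\varphi_m$ is forced into the compact set $\overline{B_1(x_0)}$, where upper semicontinuity guarantees it is attained, and the near-maximizers are pinned near $x_0$. The price is an extra limit $m\to\infty$ at the end, which is exactly where assumption {\rm (A1)} (stability of $I$ under a.e.\ convergence of uniformly bounded test functions converging in $C^2$ near $x_0$) is used; note one cannot instead invoke {\rm (A4)} to compare $I(x_0,\cdot,\varphi_m)$ with $I(x_0,\cdot,\varphi)$, since $\varphi_m\ge\varphi$ makes that inequality point the wrong way. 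To repair your argument you would need to introduce such a modification of the test function (or an equivalent penalization that is bounded away from $0$ at infinity) and then carry out this limiting step; the rest of your diagonalization and the passage to the limit via {\rm (A0)}--{\rm (A1)} is in line with the paper.
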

\begin{proof}
Suppose that $\varphi$ is a $C_b^2(\mathbb R^n)$ function such that $w^*-\varphi$ has a strict maximum (equal $0$) at $x_0\in\Omega$ over $\mathbb R^n$. We can construct a uniformly bounded sequence of $C^2(\mathbb R^n)$ functions $\{\varphi_m\}_m$ such that $\varphi_m=\varphi$ in $B_1(x_0)$, $\varphi\leq\varphi_m$ in $\mathbb R^n$, $\sup_{x\in B_2^c(x_0)}\{w^*(x)-\varphi_m(x)\}\leq-\frac{1}{m}$ and $\varphi_m\to\varphi$ pointwise. Thus, for any positive integer $m$, $w^*-\varphi_m$ has a strict maximum (equal $0$) at $x_0$ over $\mathbb R^n$. Therefore, $\sup_{x\in B_1^c(x_0)}\{w^*(x)-\varphi_m(x)\}=\epsilon_m<0$. By the definition of $w^*$, we have, for any $u\in\mathcal{F}$, $\sup_{x\in B_1^c(x_0)}\{u(x)-\varphi_m(x)\}\leq\epsilon_m<0$. Again, by the definition of $w^*$, we have, for any $\epsilon_m<\epsilon<0$, there exist $u_\epsilon\in\mathcal{F}$ and $\bar x_\epsilon\in B_1(x_0)$ such that $u_{\epsilon}(\bar x_\epsilon)-\varphi(\bar x_\epsilon)>\epsilon$. Since $u_\epsilon\in USC(\mathbb R^n)$ and $\varphi_m\in C_b^2(\mathbb R^n)$, there exists $x_\epsilon\in B_1(x_0)$ such that $u_{\epsilon}(x_\epsilon)-\varphi_m(x_\epsilon)=\sup_{x\in\mathbb R^n}\{u_\epsilon(x)-\varphi(x)\}\geq u_{\epsilon}(\bar x_\epsilon)-\varphi_m(\bar x_\epsilon)>\epsilon$. Since $w^*-\varphi_m$ attains a strict maximum (equal $0$) at $x_0$ over $\mathbb R^n$ and $u\leq w^*$ for any $u\in\mathcal{F}$, then $u_\epsilon(x_\epsilon)\to w^*(x_0)$ and $x_\epsilon\to x_0$ as $\epsilon\to 0^-$. Since $u_\epsilon$ is a viscosity subsolution of $I=0$ in $\Omega$, we have 
\begin{equation}\label{eq3.1}
I(x_\epsilon,u_\epsilon(x_\epsilon),\varphi_m(\cdot))\leq 0.
\end{equation}
Since $x_\epsilon\to x_0$, $u_\epsilon(x_\epsilon)\to w^*(x_0)$ as $\epsilon\to 0^-$, $\varphi_m=\varphi$ in $B_1(x_0)$, $\varphi_m\to \varphi$ pointwise, $\{\varphi_m\}_m$ is uniformly bounded, $\varphi\in C_b^2(\mathbb R^n)$, {\rm (A0)} and {\rm (A1)} hold, we have, letting $\epsilon\to 0^-$ and $m\to+\infty$ in $(\ref{eq3.1})$,
\begin{equation*}
I(x_0,w^*(x_0),\varphi(\cdot))\leq 0.
\end{equation*}
Therefore, $w$ is a discontinuous viscosity subsolution of $I=0$.
\end{proof}

\begin{theorem}\label{thm:per}
Let $\underbar u, \bar u$ be bounded continuous functions and be respectively a viscosity subsolution and a viscosity supersolution of $I=0$ in $\Omega$. Assume moreover that $\bar u=\underbar u=g$ in $\Omega^c$ for some bounded continuous function $g$ and $\underbar u\leq \bar u$ in $\mathbb R^n$. Then
\begin{equation*}
w(x)=\sup_{u\in\mathcal{F}}u(x),
\end{equation*}
where $\mathcal{F}=\{u\in C^0(\mathbb R^n);\,\,\underbar u\leq u\leq \bar u\,\, in\,\,\mathbb R^n\,\, and \,\, u\,\,\text{is a viscosity subsolution of $I=0$ in $\Omega$}\}$,
 is a discontinuous viscosity solution of $(\ref{eq:gennon})$.
\end{theorem}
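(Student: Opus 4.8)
The plan is to run the classical Perron argument in two halves: first show $w$ is a discontinuous viscosity subsolution, then show it is a discontinuous viscosity supersolution, and throughout keep track of the nonlocal term's extra subtleties (which is why we need the full strength of assumptions (A0)--(A4) and Lemma~\ref{lem:sta}).

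First I would record some elementary facts about $\mathcal F$. It is nonempty since $\underbar u\in\mathcal F$; every $u\in\mathcal F$ satisfies $\underbar u\le u\le\bar u$, so $\underbar u\le w\le\bar u$ in $\mathbb R^n$, and in particular $w^*\le\bar u<\infty$ everywhere, so Lemma~\ref{lem:sta} applies and gives immediately that $w$ is a discontinuous viscosity subsolution of $I=0$ in $\Omega$. Moreover, since $\underbar u=\bar u=g$ in $\Omega^c$, the squeeze forces $w=g$ in $\Omega^c$, hence $w^*=g$ in $\Omega^c$ as well (using continuity of $g$ and of $\underbar u,\bar u$ near $\partial\Omega$); so $w^*$ satisfies the boundary inequality $w^*\le g$ in $\Omega^c$ and $w$ is a discontinuous viscosity subsolution of~$(\ref{eq:gennon})$. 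I would also want the auxiliary observation that $w^*\in\mathcal F$, i.e.\ $w^*$ itself is a (continuous? no—only USC) viscosity subsolution of $I=0$: this is exactly the content of Lemma~\ref{lem:sta}, and $\underbar u\le w^*\le\bar u$ holds because $\underbar u$ is continuous (so $\underbar u\le w$ implies $\underbar u=\underbar u^*\le w^*$) and $\bar u$ is continuous (so $w\le\bar u$ implies $w^*\le\bar u^*=\bar u$).

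The substantive half is showing $w_*$ is a viscosity supersolution of $I=0$ in $\Omega$. Argue by contradiction: suppose there is $x_0\in\Omega$ and $\varphi\in C_b^2(\mathbb R^n)$ such that $w_*-\varphi$ has a strict global minimum $0$ at $x_0$ but $I(x_0,w_*(x_0),\varphi(\cdot))<0$. The idea is to bump $\varphi$ up by a small constant $\delta>0$ on a tiny ball $B_\rho(x_0)$, producing a function $u_\delta$ that coincides with $w$ outside $B_\rho(x_0)$ but strictly exceeds it at $x_0$, is still $\le\bar u$, and is still a subsolution—contradicting maximality of $w$. Concretely: by (A0) and the continuity in $x$ of $I(\cdot,\cdot,\varphi(\cdot))$ (assumption (A0)/(A1)), there are $\rho>0$ and $\delta>0$ with $I(x,\varphi(x)+\delta,\varphi(\cdot)+\delta)<0$—wait, more carefully—$I(x,r,\varphi(\cdot))<0$ for all $x\in B_\rho(x_0)$ and all $r$ with $|r-w_*(x_0)|$ small; using (A3) the additive constant in the test function is irrelevant to $I$, and using (A4) we may even work with the genuine bump. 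Then set
\begin{equation*}
u_\delta=\begin{cases}\max\{w,\ \varphi+\delta\}&\text{in }B_\rho(x_0),\\ w&\text{in }\mathbb R^n\setminus B_\rho(x_0),\end{cases}
\end{equation*}
after first shrinking $\rho,\delta$ so that $\varphi+\delta<w\le w^*$ on $\partial B_\rho(x_0)$ (possible since $w_*-\varphi\ge c>0$ on that compact set), which makes $u_\delta$ USC, equal to $w$ near $\partial B_\rho(x_0)$, and $\le\bar u$ (here one uses $\varphi(x_0)+\delta\le\bar u(x_0)$: since $w_*\le\bar u$ and $w_*(x_0)=\varphi(x_0)$, need $\delta\le$ the gap $\bar u(x_0)-\varphi(x_0)$; if that gap is $0$ one instead derives a contradiction directly from $\bar u$ being a supersolution). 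One checks $u_\delta$ is a viscosity subsolution of $I=0$ in $\Omega$ by the standard "pasting" argument: at a test point either the max is achieved by $w$ (use that $w^*$ is a subsolution, Lemma~\ref{lem:sta}) or by $\varphi+\delta$ in $B_\rho(x_0)$ (use the choice of $\rho,\delta$ and (A4) to compare the test function against $\varphi+\delta$). Thus $u_\delta\in\mathcal F$ but $u_\delta(y)>w(y)$ for some $y$ near $x_0$ (because $w_*(x_0)=\varphi(x_0)<\varphi(x_0)+\delta$ and $w_*$ is the liminf of $w$), contradicting $w=\sup\mathcal F$.

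Finally, the boundary condition for the supersolution: since $w_*=g$ in $\Omega^c$ from the squeeze above (continuity of $g$), $w_*\ge g$ there trivially. Combining, $w$ is a discontinuous viscosity supersolution of~$(\ref{eq:gennon})$, and together with the first half, a discontinuous viscosity solution. The main obstacle is the pasting/bump step in the nonlocal setting: unlike the PDE case, modifying $w$ on a small ball changes the value of $I$ at \emph{every} point of $\Omega$ through the integral term, so the verification that $u_\delta$ remains a subsolution must invoke (A4) (monotonicity under touching from above) to dominate $I$ evaluated on the pasted function by $I$ evaluated on the smooth test function, and one must be careful that the "bump region" $B_\rho(x_0)$ is small enough that the sign $I(x,\cdot,\varphi+\delta)<0$ is preserved uniformly for $x$ in it—this is where (A0)--(A1) continuity is essential. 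I would also double-check that $w_*$ is genuinely below $w$ at points forced by the bump, i.e.\ that the strict minimum at $x_0$ of $w_*-\varphi$ together with the bump really produces a point where $u_\delta$ strictly exceeds $w$; this is the usual subtlety that $w(x_0)$ itself may exceed $w_*(x_0)$, handled by working along a sequence $x_k\to x_0$ with $w(x_k)\to w_*(x_0)$.
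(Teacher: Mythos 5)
Your overall strategy (Lemma \ref{lem:sta} for the subsolution half, contradiction plus a local bump for the supersolution half) is the right one and matches the paper's, and you correctly identify the roles of (A0)--(A1), (A4), and the case $\varphi(x_0)=\bar u(x_0)$. But there is a genuine gap in the bump step. In this theorem the admissible class $\mathcal F$ consists of \emph{continuous} functions, and your pasted function $u_\delta=\max\{w,\varphi+\delta\}$ on $B_\rho(x_0)$, $=w$ outside, is built from $w$ itself, which is only known to be a supremum of continuous functions (hence lower semicontinuous, not USC and certainly not continuous --- your own parenthetical ``continuous? no---only USC'' about $w^*$ already signals the problem). Consequently $u_\delta\notin\mathcal F$, and the inequality $u_\delta>w$ near $x_0$ does not contradict $w=\sup_{\mathcal F}u$. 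Producing a \emph{continuous} competitor is exactly the extra work the paper does: it fixes $r>0$, sets $\Delta_r=\sup_{B_r^c(x_0)}(\varphi_{\epsilon_1}-w_*)<0$, and for each $y\in\bar\Omega\setminus B_r(x_0)$ picks $v_y\in\mathcal F$ with $v_y(y)-\varphi_{\epsilon_1}(y)\ge-\tfrac34\Delta_r$; by continuity and compactness finitely many of these suffice, and their maximum $v_r$ is a continuous member of $\mathcal F$ with $v_r-\varphi_{\epsilon_1}\ge-\tfrac12\Delta_r$ on $\bar\Omega\setminus B_r(x_0)$. The bump is then $U=\max\{\varphi_{\epsilon_1}-\alpha\Delta_r,\,v_r\}$ in $B_r(x_0)$ and $U=v_r$ outside, which is continuous, lies between $\underbar u$ and $\bar u$, and exceeds $w$ along a sequence tending to $x_0$.

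A second, more minor omission: because of the nonlocal term you need the test function to sit below $w_*$ with a \emph{uniformly positive} gap outside a fixed ball, not merely to realize a strict global minimum at $x_0$; since $\Omega^c$ is unbounded, strictness at $x_0$ alone does not rule out $\inf_{B_\rho^c(x_0)}(w_*-\varphi)=0$ at infinity, and then no admissible $\delta>0$ exists for your pasting inequality on $B_\rho^c(x_0)$. The paper handles this by replacing $\varphi$ with $\varphi_{\epsilon_1}\le\varphi$, equal to $\varphi$ near $x_0$, satisfying $\inf_{B_{2\delta_0}^c(x_0)}(w_*-\varphi_{\epsilon_1})\ge\epsilon_1>0$ while keeping $I(x_0,\varphi_{\epsilon_1}(x_0),\varphi_{\epsilon_1}(\cdot))<-\epsilon_0/2$.
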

\begin{proof}
Since $\underbar u\in\mathcal{F}$, then $\mathcal{F}\not=\emptyset$. Thus, $w$ is well defined, $\underbar u\leq w\leq\bar u$ in $\mathbb R^n$ and $w=\bar u=\underbar u$ in $\Omega^c$. By Lemma $\ref{lem:sta}$, $w$ is a discontinuous viscosity subsolution of $G=0$ in $\Omega$. We claim that $w$ is a discontinuous viscosity supersolution of $G=0$ in $\Omega$. If not, there exist a point $ x_0\in\Omega$ and a function $\varphi\in C_b^2(\mathbb R^n)$ such that $w_*-\varphi$ has a strict minimum (equal $0$) at the point $x_0$ over $\mathbb R^n$ and 
\begin{equation*}
I(x_0,w_*(x_0),\varphi(\cdot))<-\epsilon_0,
\end{equation*}
where $\epsilon_0$ is a positive constant. Thus, we can find sufficiently small constants $\epsilon_1>0$ and $\delta_0>0$ such that $B_{\delta_0}(x_0)\subset \Omega$ and there exists a $C_b^2(\mathbb R^n)$ function $\varphi_{\epsilon_1}$ satisfying that $\varphi_{\epsilon_1}=\varphi$ in $B_{\delta_0}(x_0)$, $\varphi_{\epsilon_1}\leq\varphi$ in $\mathbb R^n$, $\inf_{x\in B_{2\delta_0}^c(x_0)}\{w_*(x)-\varphi_{\epsilon_1}(x)\}\geq \epsilon_1>0$ and
\begin{equation}\label{eq3.3}
I(x_0,\varphi_{\epsilon_1}(x_0),\varphi_{\epsilon_1}(\cdot))<-\frac{\epsilon_0}{2}.
\end{equation}
Thus, by {\rm (A0)}, there exists $\delta_1<\delta_0$ such that, for any $x\in B_{\delta_1}(x_0)$,
\begin{equation}\label{eq3.4}
I(x,\varphi_{\epsilon_1}(x),\varphi_{\epsilon_1}(\cdot))<-\frac{\epsilon_0}{4}.
\end{equation}
By the definition of $w$, we have $\varphi_{\epsilon_1}\leq w_*\leq \bar u$ in $\mathbb R^n$. If $\varphi_{\epsilon_1}(x_0)=w_*(x_0)=\bar u(x_0)$, then $\bar u-\varphi_{\epsilon_1}$ has a strict minimum at point $x_0$ over $\mathbb R^n$. Since $\bar u$ is a viscosity supersolution of $I=0$ in $\Omega$, we have
\begin{equation*}
I(x_0,\varphi_{\epsilon_1}(x_0),\varphi_{\epsilon_1}(\cdot))\geq 0,
\end{equation*}
which contradicts with $(\ref{eq3.3})$. Thus, we have $\varphi_{\epsilon_1}(x_0)<\bar u(x_0)$. Since $\bar u$ and $\varphi_{\epsilon_1}$ are continuous functions in $\mathbb R^n$, we have $\varphi_{\epsilon_1}(x)<\bar u(x)-\epsilon_2$ in $B_{\delta_2}(x_0)$ for some $0<\delta_2<\delta_1$ and $\epsilon_2>0$. We define
\begin{equation*}
\Delta_r=\sup_{x\in B_r^c(x_0)}\{\varphi_{\epsilon_1}(x)-w_*(x)\}.
\end{equation*}
Since $\inf_{x\in B_{2\delta_0}^c(x_0)}\{w_*(x)-\varphi_{\epsilon_1}(x)\}\geq \epsilon_1>0$, $w_*-\varphi_{\epsilon_1}$ has a strict minimum (equal $0$) at the point $x_0$ and $-w_*\in USC(\mathbb R^n)$, we have $\Delta_r<0$ for each $r>0$. For any $y\in \bar\Omega\setminus B_r(x_0)$, there exists a function $v_y\in\mathcal{F}$ such that $v_y(y)-\varphi_{\epsilon_1}(y)\geq -\frac{3\Delta_r}{4}$. Since $v_y$ and $\varphi_{\epsilon_1}$ are continuous in $\mathbb R^n$, there exists a positive constant $\delta_y$ such that $\inf_{x\in B_{\delta_y}(y)}\{v_y(x)-\varphi_{\epsilon_1}(x)\}\geq-\frac{\Delta_r}{2}$. Since $\bar\Omega\setminus B_r(x_0)$ is a compact set in $\mathbb R^n$, there exists a finite set $\{y_i\}_{i=1}^{n_r}\subset \bar\Omega\setminus B_r(x_0)$ such that $\bar\Omega\setminus B_r(x_0)\subset \cup_{i=1}^{n_r} B_{\delta_{y_i}}(y_i)$. Thus, we define 
\begin{equation*}
v_r(x)=\sup_{1\leq i\leq n_r}\{v_{y_i}(x)\},\quad x\in\mathbb R^n.
\end{equation*}
By Lemma $\ref{lem:sta}$ and the definition of $v_r$, we have $v_r\in\mathcal{F}$ and $\inf_{x\in\bar\Omega\setminus B_r(x_0)}\{v_r(x)-\varphi_{\epsilon_1}(x)\}\geq -\frac{\Delta_r}{2}$. Let $\alpha_r$ be a constant such that $0<\alpha_r<\frac{1}{2}$ and $-\alpha_r\Delta_r<\epsilon_2$.
Thus, we define 
\begin{equation*}
U(x)=\left\{\begin{array}{ll} \max\{\varphi_{\epsilon_1}(x)-\alpha\Delta_r,\,\,v_r(x)\},
\quad x\in B_r(x_0),\\
\qquad\qquad\qquad\qquad\quad\,\,\, v_r(x),\quad x\in B_r^c(x_0),
\end{array}
  \right.
\end{equation*}
where $0<r<\delta_2$ and $0<\alpha<\alpha_r$. By the definition of $U$, we obtain $U\in C^0(\mathbb R^n)$, $\underbar u\leq U\leq\bar u$ in $\mathbb R^n$, and there exists a squence $\{x_n\}_n\subset B_r(x_0)$ such that $x_n\to x_0$ as $n\to+\infty$ and $U(x_n)>w(x_n)$.

We claim that $U$ is a viscosity subsolution of $I=0$ in $\Omega$. For any $y\in\Omega$, suppose that there is a function $\psi\in C_b^2(\mathbb R^n)$ such that $U-\psi$ has a maximum (equal $0$) at $y$ over $\mathbb R^n$. We then divide the proof into two cases.

Case 1: $U(y)=v_r(y)$.

Since $v_r\leq U\leq \psi$ in $\mathbb R^n$, then $v_r-\psi$ has a maximum (equal $0$) at $y$ over $\mathbb R^n$. We recall that $v_r$ is a viscosity subsolution of $I=0$ in $\Omega$. Therefore, we have 
\begin{equation*}
I(y,U(y),\psi(\cdot))\leq 0.
\end{equation*}

Case 2: $U(y)=\varphi_{\epsilon_1}(y)-\alpha\Delta_r$.

We first notice that $y\in B_r(x_0)$. Since $\varphi_{\epsilon_1}-\alpha\Delta_r\leq U\leq \psi$ in $B_r(x_0)$, then $\varphi_{\epsilon_1}-\alpha\Delta_r-\psi\leq 0$ in $B_r(x_0)$. By the definition of $U$, we have $\psi\geq U=v_r$ in $B_r^c(x_0)$. Thus, $\varphi_{\epsilon_1}-\alpha\Delta_r-\psi\leq \varphi_{\epsilon_1}-\alpha\Delta_r-v_r\leq \frac{\Delta_r}{2}-\alpha\Delta_r\leq 0$ in $B_r^c(x_0)$. Therefore, we have $\varphi_{\epsilon_1}-\alpha\Delta_r-\psi$ has a maximum (equal $0$) at $y\in B_r(x_0)\subset B_{\delta_1}(x_0)$ over $\mathbb R^n$. Since $(\ref{eq3.4})$, {\rm (A0)}, {\rm (A3)}-{\rm (A4)} hold, we can choose sufficiently small $\alpha$ independent of $\psi$ such that
\begin{equation*}
I(y,\psi(y),\psi(\cdot))\leq I(y,\varphi_{\epsilon_1}(y)-\alpha\Delta_r,\varphi_{\epsilon_1}(\cdot))\leq 0.
\end{equation*}

Based on the two cases, we have that $U$ is a viscosity subsolution of $I=0$ in $\Omega$. Therefore, $U\in\mathcal{F}$, which contradicts with the definition of $w$. Thus, $w$ is a discontinuous viscosity supersolution of $I=0$ in $\Omega$. Therefore, $w$ is a discontinuous viscosity solution of $I=0$ in $\Omega$. Since $w=g$ in $\Omega^c$, then $w$ is a discontinuous viscosity solution of $(\ref{eq:gennon})$.
\end{proof}

\begin{remark}
Under the assumptions of Theorem $\ref{thm:per}$, if the comparison principle holds for $(\ref{eq:gennon})$, the discontinuous viscosity solution $w$ is the unique viscosity solution of $(\ref{eq:gennon})$. For example, if $I$ is a translation invariant nonlocal operator, $(\ref{eq:gennon})$ admits a unique viscosity solution.
\end{remark}

Before applying Theorem $\ref{thm:per}$ to $(\ref{eq:belisa})$, we now give the precise assumptions on its equation. For any $0<\lambda\leq\Lambda$ and $0<\sigma<2$, we consider the family of kernels $K:\mathbb R^n\to\mathbb R$ satisfying the following assumptions.\\
{\rm (H0)} $K(z)\geq 0$ for any $z\in\mathbb R^n$.\\
{\rm (H1)} For any $\delta>0$,
\begin{equation*}
\int_{B_{2\delta}\setminus B_\delta}K(z)dz\leq (2-\sigma)\Lambda \delta^{-\sigma}.
\end{equation*}
{\rm (H2)} For any $\delta>0$,
\begin{equation*}
\left|\int_{B_{2\delta}\setminus B_\delta}zK(z)dz\right|\leq \Lambda|1-\sigma|\delta^{1-\sigma}.
\end{equation*}
We define our nonlocal operator
\begin{equation}\label{eq:nonope1}
I_{ab}[x,u]:=\int_{\mathbb R^n}\delta_z u(x)K_{ab}(x,z)dz,
\end{equation}
where
\begin{equation*}
\delta_zu(x):=\left\{\begin{array}{ll} u(x+z)-u(x),\,\qquad\qquad\qquad\qquad\qquad\text{if $\sigma<1$},\\
u(x+z)-u(x)-\mathbbm 1_{B_1}(z)\nabla u(x)\cdot z,\qquad \text{if $\sigma=1$},\\
u(x+z)-u(x)-\nabla u(x)\cdot z,\qquad\qquad\quad \text{if $\sigma>1$}.\\
\end{array}
  \right.
\end{equation*}
We consider the following nonlocal Bellman-Isaacs equation
\begin{equation}\label{eq3.5}
\sup_{a\in\mathcal{A}}\inf_{b\in\mathcal{B}}\{-I_{ab}[x,u]+b_{ab}(x)\cdot \nabla u(x)+c_{ab}(x)u(x)+f_{ab}(x)\}=0,\quad\text{in $\Omega$.}
\end{equation}
\begin{corollary}\label{col:biper}
Assume that $0<\sigma<2$, $b_{ab}\equiv 0$ in $\Omega$ if $\sigma<1$ and $c_{ab}\geq 0$ in $\Omega$. Let $\underbar u$, $\bar u$ be bounded continuous functions and be respectively a viscosity subsolution and a viscosity supersolution of $(\ref{eq3.5})$ where $\{K_{ab}(\cdot,z)\}_{a,b,z}$, $\{b_{ab}\}_{a,b}$, $\{c_{ab}\}_{a,b}$ and $\{f_{ab}\}_{a,b}$ are sets of uniformly continuous functions in $\Omega$, uniformly in $a\in\mathcal{A}$, $b\in\mathcal{B}$, and $\{K_{ab}(x,\cdot):x\in\Omega, a\in\mathcal{A}, b\in\mathcal{B}\}$ are kernels satisfying {\rm (H0)}-{\rm (H2)}.
Assume moreover that $\bar u=\underbar u=g$ in $\Omega^c$ for some bounded continuous function $g$ and $\underbar u\leq\bar u$ in $\mathbb R^n$. Then
\begin{equation*}
w(x)=\sup_{u\in\mathcal{F}}u(x),
\end{equation*}
where $\mathcal{F}=\{u\in C^0(\mathbb R^n);\,\,\underbar u\leq u\leq \bar u\,\, in\,\,\mathbb R^n\,\, and \,\, u\,\,\text{is a viscosity subsolution of $(\ref{eq3.5})$}\}$,
 is a discontinuous viscosity solution of $(\ref{eq:belisa})$.
\end{corollary}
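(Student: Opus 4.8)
The plan is to identify the left--hand side of $(\ref{eq3.5})$ with the abstract operator
\begin{equation*}
I(x,r,u(\cdot)):=\sup_{a\in\mathcal{A}}\inf_{b\in\mathcal{B}}\bigl\{-I_{ab}[x,u]+b_{ab}(x)\cdot\nabla u(x)+c_{ab}(x)r+f_{ab}(x)\bigr\},
\end{equation*}
to verify that this $I$ satisfies $({\rm A0})$--$({\rm A4})$, and then to invoke Theorem $\ref{thm:per}$ verbatim. Indeed the family $\mathcal{F}$, the ordered pair $\underbar u\le\bar u$ and the boundary datum $g$ in the hypotheses of the corollary are exactly those required by the theorem, and ``viscosity sub/supersolution of $(\ref{eq3.5})$'' is, by definition, ``viscosity sub/supersolution of $I=0$ in $\Omega$'' for this $I$; so the entire content of the proof is the verification of $({\rm A0})$--$({\rm A4})$ under the standing assumptions $({\rm H0})$--$({\rm H2})$ and uniform continuity of the coefficients.

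Three of the four properties are essentially formal. For $({\rm A2})$: since $c_{ab}\ge 0$ in $\Omega$, the bracketed quantity is nondecreasing in $r$ for every $(a,b)$, and $\sup_a\inf_b$ preserves monotonicity. For $({\rm A3})$: for any constant $C$ one has $\delta_z(\varphi+C)(x)=\delta_z\varphi(x)$ and $\nabla(\varphi+C)(x)=\nabla\varphi(x)$, so neither $I_{ab}[x,\cdot]$ nor the drift term is affected while the $r$--slot is untouched. For $({\rm A4})$: if $\varphi$ touches $\psi$ from above at $x$ over $\mathbb{R}^n$, i.e.\ $\varphi\ge\psi$ on $\mathbb{R}^n$ with $\varphi(x)=\psi(x)$ and both $C^2$ near $x$, then $\nabla\varphi(x)=\nabla\psi(x)$, hence $\delta_z\varphi(x)-\delta_z\psi(x)=\varphi(x+z)-\psi(x+z)\ge 0$ for every $z$; by $({\rm H0})$ this gives $I_{ab}[x,\varphi]\ge I_{ab}[x,\psi]$, so for each $(a,b)$ the bracket for $\varphi$ does not exceed the one for $\psi$ (the drift, $c_{ab}(x)r$ and $f_{ab}(x)$ terms being equal), and $\sup_a\inf_b$ again preserves the inequality.

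The substantive part is $({\rm A0})$ and $({\rm A1})$, the finiteness and joint continuity of the nonlocal term. I would first extract from $({\rm H0})$--$({\rm H2})$, by summing over the dyadic annuli $B_{2^{-j}}\setminus B_{2^{-j-1}}$ and $B_{2^{j+1}}\setminus B_{2^{j}}$, the bounds
\begin{equation*}
\int_{\mathbb{R}^n}\min\{|z|^2,1\}\,K_{ab}(x,z)\,dz\le C,\qquad \int_{B_1^c}K_{ab}(x,z)\,dz\le C\qquad(x\in\Omega,\ a\in\mathcal{A},\ b\in\mathcal{B}),
\end{equation*}
with $C=C(n,\lambda,\Lambda,\sigma)$, together with $\int_{B_1}|z|K_{ab}(x,z)\,dz\le C$ when $\sigma<1$ and $\int_{B_1^c}|z|K_{ab}(x,z)\,dz\le C$ when $\sigma>1$, the remaining ``odd'' first--order part in the critical case $\sigma=1$ being controlled by $({\rm H2})$. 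Combined with a Taylor expansion of $\varphi\in C^2(B_\delta(x))\cap L^\infty(\mathbb{R}^n)$ near $x$ and the crude bound $|\delta_z\varphi(x)|\le 2\|\varphi\|_{L^\infty}$ away from $x$, these make $I_{ab}[x,\varphi]$ finite and bounded uniformly in $(a,b)$; since the coefficient families are uniformly bounded in $(a,b)$, $I(x,r,\varphi(\cdot))$ is finite. For the two continuity statements I would split $\int_{\mathbb{R}^n}=\int_{B_\rho}+\int_{B_R\setminus B_\rho}+\int_{B_R^c}$ with $0<\rho<\delta<R$: the $B_R^c$--piece is $<\ep$ uniformly in $(a,b,x)$ once $R$ is large, by the tail bound in $({\rm H1})$; the $B_\rho$--piece is $<\ep$ uniformly once $\rho$ is small, by the Taylor/first--order estimates and the convergent dyadic series (this is where $({\rm H1})$ and, for $\sigma\ge1$, $({\rm H2})$ are used); and on the fixed shell $B_R\setminus B_\rho$ one uses uniform continuity of $x\mapsto K_{ab}(x,z)$ and of the coefficients, together with — for $({\rm A1})$ — the a.e.\ convergence $\varphi_k\to\varphi$ and the uniform $L^\infty$ bound (dominated convergence against $\mathbbm{1}_{B_R\setminus B_\rho}K_{ab}(x,z)\,dz$) and the $C^2$--convergence on $B_\delta(x)$. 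As every estimate is uniform over $\mathcal{A}$ and $\mathcal{B}$ it survives $\sup_a\inf_b$, whence $({\rm A0})$--$({\rm A1})$; Theorem $\ref{thm:per}$ then applies and produces a discontinuous viscosity solution of $(\ref{eq:belisa})$.

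I expect the only real difficulty to be bookkeeping: keeping all the integral estimates in $({\rm A0})$--$({\rm A1})$ uniform over the index sets $\mathcal{A}$, $\mathcal{B}$ so that continuity is not destroyed upon taking $\sup_a\inf_b$, and handling the correction $\delta_z$ in the ranges $\sigma\ge1$, where one cannot rely on absolute integrability of the first--order part near the origin and must instead exploit $({\rm H2})$ (and, for $\sigma>1$, the extra decay of $\int_{B_1^c}|z|K_{ab}$).
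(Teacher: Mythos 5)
Your proposal is correct and follows essentially the same route as the paper: define the abstract operator $I$ via the $\sup_a\inf_b$ bracket with $c_{ab}(x)r$ in the second slot, verify $({\rm A0})$--$({\rm A4})$ from $({\rm H0})$--$({\rm H2})$ and the uniform continuity of the coefficients, and apply Theorem \ref{thm:per}. The only difference is cosmetic: the paper outsources the integrability bound $\int_{\mathbb R^n}\min\{|z|^2,1\}K_{ab}(x,z)\,dz<\infty$ to Lemma 2.3 of \cite{SS}, whereas you rederive it by dyadic summation over annuli.
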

\begin{proof}
We define 
\begin{equation*}
I(x,r,u(\cdot)):=\sup_{a\in\mathcal{A}}\inf_{b\in\mathcal{B}}\{-I_{ab}[x,u]+b_{ab}(x)\cdot \nabla u(x)+c_{ab}(x)r+f_{ab}(x)\}.
\end{equation*}
It follows from {\rm (H1)} and {\rm (H2)} that $I_{ab}$ satisfies $(\ref{eq:levmea})$, see Lemma 2.3 in \cite{SS}. Then, by $(\ref{eq:levmea})$ and uniform continuity of the coefficients, {\rm (A0)} and {\rm (A1)} hold. Since $c_{ab}\geq 0$ in $\Omega$, {\rm (A2)} holds. By {\rm (H0)} and the structure of $I_{ab}$, {\rm (A3)} and {\rm (A4)} hold. 
\end{proof}

\section{H\"older estimates}

In this section we give H\"older estimates of the discontinuous viscosity solution constructed by Perron's method in the above section. To obtain H\"older estimates, we will assume that the nonlocal operator $I$ is uniformly elliptic. 

We define $\mathcal{L}:=\mathcal{L}(\sigma,\lambda,\Lambda)$ is the class of all the nonlocal operators of form
\begin{equation*}
Lu(x):=\int_{\mathbb R^n}\delta_zu(x)K(z)dz,
\end{equation*}
where $K$ is a kernel satisfying the assumptions {\rm (H0)}-{\rm (H2)} given above and\\
{\rm (H3)} There exist positive constants $\lambda$ and $\mu$ such that, for any $\delta>0$, there is a set $A_\delta$ satisfying
\begin{itemize}
\item[(i)] $A_\delta\subset B_{2\delta}\setminus B_\delta$;
\item[(ii)] $A_\delta=-A_\delta$;
\item[(iii)] $|A_\delta|\geq\mu|B_{2\delta}\setminus B_\delta|$;
\item[(iv)] $K(z)\geq (2-\sigma)\lambda \delta^{-n-\sigma}$ for any $z\in A_\delta$.
\end{itemize}
We note that we will also write $K\in\mathcal{L}$ if the corresponding nonlocal operator $L\in\mathcal{L}$. We then define the extremal operators \begin{equation*}
M_{\mathcal{L}}^+u(x):=\sup_{L\in\mathcal{L}}Lu(x),
\end{equation*}
\begin{equation*}
 M_{\mathcal{L}}^-u(x):=\inf_{L\in\mathcal{L}}Lu(x).
 \end{equation*}
We denote by $m:[0,+\infty)\to[0,+\infty)$ a modulus of continuity. We say that the nonlocal operator $I$ is uniformly elliptic if for every $r,s\in\mathbb R$, $x\in\Omega$, $\delta>0$, $\varphi,\psi\in C^{2}(B_\delta(x))\cap L^{\infty}(\mathbb R^n)$, 
\begin{eqnarray*}
&&M_{\mathcal L}^-(\varphi-\psi)(x)-C_0|\nabla (\psi-\varphi)(x)|-m(|r-s|)\\
&\leq&  I(x,r,\psi(\cdot))-I(x,s,\varphi(\cdot))\\
&\leq& M_{\mathcal{L}}^+(\varphi-\psi)(x)+C_0|\nabla(\psi-\varphi)(x)|+m(|r-s|),
\end{eqnarray*}
where $C_0$ is a non-negative constant such that $C_0=0$ if $\sigma<1$.
\begin{remark}
The definition of uniform ellipticity is different from that in \cite{SS} since the nonlocal operator $I$ contains the second component $r$.
\end{remark}
\begin{lemma}\label{lem:equiv}
If the nonlocal operator $I$ is uniformly elliptic and satisfies {\rm (A0)}, {\rm (A2)}, then $I$ satisfies {\rm (A0)}-{\rm (A4)}.
\end{lemma}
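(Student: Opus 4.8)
The plan is to derive (A1), (A3) and (A4) from the uniform-ellipticity inequality, since (A0) and (A2) are among the hypotheses. Two elementary observations about the extremal operators carry most of the argument. First, $\delta_z$ annihilates constants, so $LC\equiv 0$ for every $L\in\mathcal L$ and hence $M^\pm_{\mathcal L}C\equiv 0$ for any constant $C$; also $m(0)=0$. This gives (A3) at once: applying uniform ellipticity to the test functions $\varphi+C$ and $\varphi$ with $r=s$, the function entering the extremal operators is the constant $-C$, while the gradient term and $m(|r-s|)$ vanish, so the two-sided bound forces $I(x,r,\varphi+C)=I(x,r,\varphi)$. Second, if $v\in C^2(B_\delta(x))\cap L^\infty(\mathbb R^n)$ attains a global maximum over $\mathbb R^n$ at $x$, then $v(x+z)-v(x)\le 0$ for every $z$ and $\nabla v(x)=0$, whence $\delta_zv(x)\le 0$ for all $z$; since $K\ge 0$ by (H0), this gives $Lv(x)\le 0$ for every $L\in\mathcal L$, i.e. $M^+_{\mathcal L}v(x)\le 0$. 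For (A4): if $\varphi$ touches $\psi$ from above at $x$ (so $\varphi\ge\psi$ on $\mathbb R^n$ and $\varphi(x)=\psi(x)$), then $\psi-\varphi$ has a global maximum $0$ at $x$ and $\nabla(\psi-\varphi)(x)=0$, so by uniform ellipticity $I(x,r,\varphi)-I(x,r,\psi)\le M^+_{\mathcal L}(\psi-\varphi)(x)+C_0|\nabla(\psi-\varphi)(x)|+m(0)\le 0$.

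For (A1), write $I(x_k,r,\varphi_k)-I(x,r,\varphi)=\big[I(x_k,r,\varphi_k)-I(x_k,r,\varphi)\big]+\big[I(x_k,r,\varphi)-I(x,r,\varphi)\big]$. The second bracket tends to $0$ by (A0), since the test function $\varphi$ is fixed and $(x_k,r)\to(x,r)$. For the first bracket, set $w_k:=\varphi-\varphi_k$; using uniform ellipticity with the pair $\varphi_k,\varphi$ and also with $\varphi,\varphi_k$ (each time $r=s$) gives
\[
|I(x_k,r,\varphi_k)-I(x_k,r,\varphi)|\le \sup_{L\in\mathcal L}|Lw_k(x_k)|+C_0|\nabla w_k(x_k)| .
\]
The gradient term goes to $0$ because $\varphi_k\to\varphi$ in $C^2(B_\delta(x))$ and $x_k\to x$ (and it is absent when $\sigma<1$). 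Thus (A1) reduces to showing $\sup_{L\in\mathcal L}|Lw_k(x_k)|\to 0$, where $w_k\to 0$ a.e.\ and in $C^2(B_\delta(x))$, $\{w_k\}$ is uniformly bounded, and $w_k(x_k)\to 0$, $\nabla w_k(x_k)\to 0$.

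To prove this, fix a small $\rho$ with $\overline{B_{2\rho}(x)}\subset B_\delta(x)$ (so $B_\rho(x_k)\subset B_\delta(x)$ for large $k$) and split each $Lw_k(x_k)$ into the integral over $B_\rho$ and the integral over $B_\rho^c$. On $B_\rho$, Taylor's theorem gives $|\delta_zw_k(x_k)|\le C\|w_k\|_{C^2(B_\rho(x_k))}|z|^{\beta}$, with $\beta=2$ if $\sigma\ge1$ and $\beta=1$ if $\sigma<1$, while summing (H1) over dyadic annuli gives $\sup_{L\in\mathcal L}\int_{B_\rho}|z|^\beta K(z)\,dz\le C(\sigma,\Lambda)\rho^{\beta-\sigma}$ with $\beta-\sigma>0$; since $\|w_k\|_{C^2(B_\rho(x_k))}\to0$, the near part tends to $0$ for each fixed $\rho$, uniformly in $L$. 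On $B_\rho^c$ the measure $K(z)\,dz$ is finite, with the uniform bounds $\sup_{L\in\mathcal L}\int_{B_\rho^c}K\le C(\sigma,\Lambda)\rho^{-\sigma}$ and $\sup_{L\in\mathcal L}\int_{B_R^c}K\le C(\sigma,\Lambda)R^{-\sigma}$; combining these with $w_k(x_k)\to0$, $\nabla w_k(x_k)\to0$, the uniform boundedness of $\{w_k\}$, and the a.e.\ (hence $L^1_{\mathrm{loc}}$, by dominated convergence) convergence $w_k\to0$, one shows the far part tends to $0$ as well. Hence $\sup_{L\in\mathcal L}|Lw_k(x_k)|\to0$ and (A1) follows.

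The main obstacle is precisely the far-field estimate in (A1). The kernels of $\mathcal L$ are controlled only in the integral sense of (H1)--(H2), not pointwise, so they may concentrate their mass on sets of arbitrarily small Lebesgue measure; consequently the convergence $\int_{B_\rho^c}\delta_zw_k(x_k)K(z)\,dz\to0$ has to be made uniform in $L\in\mathcal L$ by carefully combining the uniform tail decay in $R$ with the $L^1_{\mathrm{loc}}$ convergence of $w_k$ on the intermediate region $B_R\setminus B_\rho$. Everything else --- (A3), (A4), the reduction of (A1) to the extremal operators, the vanishing of the gradient term, and the near-field estimate --- is short.
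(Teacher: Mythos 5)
Your handling of (A3) and (A4) is correct and coincides with the paper's: constants are annihilated by $M_{\mathcal L}^{\pm}$, and if $\varphi$ touches $\psi$ from above then $\psi-\varphi$ has a global maximum $0$ at $x$ with $\nabla(\psi-\varphi)(x)=0$, so $M_{\mathcal L}^{+}(\psi-\varphi)(x)\le 0$. Your reduction of (A1) to $\sup_{L\in\mathcal L}|L(\varphi-\varphi_k)(x_k)|\to0$ via the two-sided ellipticity inequality is also exactly the paper's display (4.1), and your near-field Taylor estimate (using (H1) summed over dyadic annuli, with $\beta-\sigma>0$) is fine.

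The far-field step, however, is a genuine gap, and the mechanism you propose does not close it. After removing the correction terms (which are handled by $w_k(x_k)\to0$, $\nabla w_k(x_k)\to0$ and the (H1)--(H2) tail bounds) and the outer tail $B_R^c$ (handled by uniform boundedness and $\sup_{K}\int_{B_R^c}K\le CR^{-\sigma}$), you are left with $\sup_{K\in\mathcal L}\int_{B_R\setminus B_\rho}|w_k(x_k+z)|K(z)\,dz$. Here $L^1_{\rm loc}$ convergence of $w_k$ buys you nothing: the kernels in $\mathcal L$ are controlled by (H1) only as measures on each annulus, not in $L^\infty$, so neither H\"older pairing ($\|w_k\|_{L^1}\|K\|_{L^\infty}$ or $\|w_k\|_{L^\infty}\|K\\|_{L^1}$) produces a quantity that tends to zero. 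Indeed the assertion can fail under the stated hypotheses: take $w_k=\mathbbm{1}_{E_k}$ on $B_\delta(x)^c$ with $E_k\subset x_k+(B_2\setminus B_1)$ and $|E_k|\to0$ (so $w_k\to0$ a.e., uniformly bounded, and $w_k\equiv 0$ near $x$), and let $K_k$ place mass comparable to $(2-\sigma)\Lambda$ on the symmetrized set $(E_k-x_k)\cup(-(E_k-x_k))$ (symmetrization keeps (H2); add a background $(2-\sigma)\lambda|z|^{-n-\sigma}$ piece for (H3)). Then $\int\delta_zw_k(x_k)K_k(z)\,dz$ stays bounded away from $0$. So the uniform-in-$L$ convergence requires more than a.e.\ convergence --- e.g.\ locally uniform convergence of $\varphi_k$ on compact sets, which would make $\|w_k\|_{L^\infty(B_R(x_k)\setminus B_\rho(x_k))}\to0$ and finish the argument. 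To be fair, the paper's own proof is no more detailed at this point (it writes the sandwich inequality, cites the integrability of the kernels, and ``lets $k\to+\infty$''), so you have correctly isolated the one delicate step; but as written your proposal does not prove it, and the step is not merely technical.
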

\begin{proof}
Suppose that $\delta>0$, $x_k\to x$ in $\Omega$, $\varphi_k\to \varphi$ a.e. in $\mathbb R^n$, $\varphi_k\to\varphi$ in $C^{2}(B_\delta(x))$ and $\{\varphi_k\}_k$ is uniformly bounded in $\mathbb R^n$. Since $I$ is uniformly elliptic, we have, for any $r\in\mathbb R$,
\begin{eqnarray}
&&M_{\mathcal L}^-(\varphi-\varphi_k)(x_k)-C_0|\nabla (\varphi_k-\varphi)(x_k)|\nonumber\\
&\leq&I(x_k,r,\varphi_k(\cdot))-I(x_k,r,\varphi(\cdot))\nonumber\\
&\leq&M_{\mathcal{L}}^+(\varphi-\varphi_k)(x_k)+C_0|\nabla(\varphi_k-\varphi)(x_k)|.\label{eq:4.1}
\end{eqnarray}
Since $K\in\mathcal{L}$, then, by Lemma 2.3 in \cite{SS}, $K$ satisfies $(\ref{eq:levmea})$. Letting $k\to+\infty$ in $(\ref{eq:4.1})$, we have, by {\rm(A0)},
\begin{equation*}
\lim_{k\to+\infty}I(x_k,r,\varphi_k(\cdot))=I(x,r,\varphi(\cdot)).
\end{equation*}
Therefore, {\rm(A1)} holds. For any constant $C$, we have
\begin{equation*}
0=M_{\mathcal L}^-(-C)-C_0|\nabla C|\leq I(x,r,\varphi(\cdot)+C)-I(x,r,\varphi(\cdot))\leq M_{\mathcal{L}}^+(-C)+C_0|\nabla C|=0.
\end{equation*}
Thus, {\rm(A3)} holds. If $\varphi$ touches a $C^{2}(B_\delta(x))\cap L^{\infty}(\mathbb R^n)$ function $\psi$ from above at $x$, then
\begin{equation*}
I(x,r,\varphi)-I(x,r,\psi)\leq M_{\mathcal{L}}^+(\psi-\varphi)(x)\leq 0.
\end{equation*}
Therefore, {\rm(A4)} holds.
\end{proof}


The following lemma is an elliptic version of Theorem 6.1 in \cite{SS}.
\begin{lemma}\label{lem:har}
Assume $0<\sigma_0\leq\sigma<2$, $C_0,C_1\geq 0$, and further assume $C_0=0$ if $\sigma<1$. Let $u$ be a viscosity supersolution of
\begin{equation*}
M_{\mathcal{L}}^-u-C_0|\nabla u|= C_1\quad \text{in $B_2$}
\end{equation*}
and $u\geq 0$ in $\mathbb R^n$. Then there exist constants $C$ and $\epsilon_3$ such that
\begin{equation*}
\left(\int_{B_{1}}u^{\epsilon_3} dx\right)^{\frac{1}{\epsilon_3}}\leq C(\inf_{B_{1}}u+C_1),
\end{equation*}
where $\epsilon_3$ and $C$ depend on $\sigma_0$, $\lambda$, $\Lambda$, $C_0$, $n$ and $\mu$.
\end{lemma}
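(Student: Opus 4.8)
The plan is to reduce the statement to the already-established weak Harnack inequality for the translation-invariant class $\mathcal{L}$, namely Theorem 6.1 in \cite{SS}, by verifying that a viscosity supersolution $u$ of the equation $M_{\mathcal{L}}^-u - C_0|\nabla u| = C_1$ in $B_2$ (in the sense of Definition \ref{de:vis1}, with the nonlocal operator $I(x,r,\varphi(\cdot)) := M_{\mathcal{L}}^-\varphi(x) - C_0|\nabla\varphi(x)| - C_1$) is automatically a supersolution in the exact form to which the cited theorem applies. The subtlety is that Theorem 6.1 in \cite{SS} is stated for the parabolic equation, so first I would note that a time-independent function $u(x)$ which is a stationary supersolution in $B_2$ is, trivially, a parabolic supersolution on any cylinder $(-T,0]\times B_2$: if $\phi(t,x)$ touches $u$ from below at an interior point $(t_0,x_0)$, then $\partial_t\phi(t_0,x_0)\le 0$ and $x\mapsto\phi(t_0,x)$ touches $u$ from below at $x_0$, so the parabolic supersolution inequality $\partial_t\phi - M_{\mathcal{L}}^-\phi + C_0|\nabla\phi| \ge -C_1$ follows from the elliptic one. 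Care is needed with the one-point-in-space-only test functions allowed in the nonlocal parabolic definition, but the standard device of splitting a test function into its smooth local part near $x_0$ and its global values away from $x_0$ — exactly as used in the proof of Lemma \ref{lem:sta} and Theorem \ref{thm:per} above — handles this; one approximates by $C_b^2$ functions agreeing with $\phi$ on a small ball and dominating $u$ outside, using the finiteness of the Lévy integral $(\ref{eq:levmea})$ guaranteed by Lemma 2.3 in \cite{SS}.

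Next I would apply the parabolic weak Harnack estimate of \cite{SS} on the cylinder, say $(-1,0]\times B_2$ with the measure set taken in $\{-1\}\times B_1$ or in a spatial slice, and integrate/specialize to recover the purely spatial estimate
\[
\left(\int_{B_1} u^{\epsilon_3}\,dx\right)^{1/\epsilon_3} \le C\left(\inf_{B_1} u + C_1\right).
\]
Here the dependence of $\epsilon_3$ and $C$ only on $\sigma_0,\lambda,\Lambda,C_0,n,\mu$ is inherited directly from the corresponding dependence in \cite{SS}; the lower bound $\sigma_0\le\sigma$ is what keeps the constants from degenerating as $\sigma\to 0$, and $C_0=0$ when $\sigma<1$ is needed so that the gradient term does not overwhelm the (then weaker) diffusion. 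The rescaling from whatever normalization \cite{SS} uses to the domains $B_2\supset B_1$ stated here is routine: the class $\mathcal{L}$ is scale-invariant in the appropriate sense because (H1)–(H3) are stated uniformly over all $\delta>0$, so dilations map $\mathcal{L}$ to $\mathcal{L}$ up to the explicit $\sigma$-dependent factors already accounted for.

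The main obstacle I anticipate is the bookkeeping in the first step: making precise that the stationary viscosity supersolution notion of this paper (test functions in $C_b^2(\mathbb{R}^n)$, evaluated through the three-component operator $I$) coincides with the notion used in \cite{SS}, and that passing to the parabolic setting introduces no loss. This requires checking that the extremal operator $M_{\mathcal{L}}^-$ as defined here — a sup/inf over the class $\mathcal{L}$ with the symmetric-annulus lower bound (H3) — is exactly the $M^-$ of \cite{SS}, and that the $-C_0|\nabla u|$ and $-C_1$ terms are absorbed in \cite{SS}'s framework (they are, since \cite{SS} already treats a drift term and an inhomogeneity). Once this identification is made, there is essentially nothing left to prove: the lemma is the elliptic shadow of the parabolic weak Harnack inequality, and the proof is a citation plus the two reductions (stationary $\Rightarrow$ parabolic supersolution, and parabolic estimate $\Rightarrow$ elliptic estimate on the stated balls).
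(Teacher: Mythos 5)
Your proposal is essentially the paper's approach: the paper offers no proof of this lemma at all beyond introducing it as ``an elliptic version of Theorem 6.1 in \cite{SS}'', and your reduction (a stationary supersolution is a parabolic supersolution on any cylinder, apply the parabolic weak Harnack inequality of \cite{SS}, then specialize the time-independent case and rescale/cover to the stated balls) is exactly the standard argument that phrase leaves implicit. One small correction: at a point where $\phi$ touches the time-independent $u$ from below, the map $t\mapsto\phi(t,x_0)$ attains a maximum at $t_0$, so $\partial_t\phi(t_0,x_0)\geq 0$ (not $\leq 0$), and it is this sign, combined with the elliptic inequality $M_{\mathcal{L}}^-\phi(x_0)-C_0|\nabla\phi(x_0)|\leq C_1$, that yields the parabolic supersolution inequality.
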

The following Lemma is a direct Corollary of Lemma 4.2.
\begin{corollary}\label{cor:har}
Assume $0<\sigma_0\leq\sigma<2$, $0<r<1$, $C_0,C_1\geq 0$, and further assume $C_0=0$ if $\sigma<1$. Let $u$ be a viscosity supersolution of
\begin{equation*}
M_{\mathcal{L}}^-u-C_0|\nabla u|= C_1\quad \text{in $B_{2r}$}
\end{equation*}
and $u\geq 0$ in $\mathbb R^n$. Then there exist constants $C$ and $\epsilon_3$ such that
\begin{equation}\label{eq:har}
(|\{u>t\}\cap B_r|)\leq Cr^n(u(0)+C_1r^\sigma)^{\epsilon_3} t^{-\epsilon_3},\quad\text{for any $t\geq0$},
\end{equation}
where $\epsilon_3$ and $C$ depend on $\sigma_0$, $\lambda$, $\Lambda$, $C_0$, $n$ and $\mu$.
\end{corollary}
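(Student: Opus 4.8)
The plan is to deduce Corollary~\ref{cor:har} from Lemma~\ref{lem:har} by a scaling argument. The idea is to rescale the ball $B_{2r}$ to $B_2$, apply the weak Harnack inequality of Lemma~\ref{lem:har} to the rescaled function, and then translate the resulting $L^{\epsilon_3}$-bound into a bound on the distribution function via Chebyshev's inequality. First I would check that the class $\mathcal{L}$ is invariant under this rescaling. Setting $v(y):=u(ry)$ for $y\in B_2$, a direct computation shows that if $Lu(x)=\int_{\mathbb R^n}\delta_z u(x)K(z)\,dz$ with $K$ satisfying (H0)--(H3) for the parameters $\lambda,\Lambda,\mu,\sigma$, then $L v(y)=r^\sigma (\tilde L u)(ry)$ where $\tilde L$ has kernel $\tilde K(z):=r^{n+\sigma}K(rz)$, and one verifies that $\tilde K$ again satisfies (H0)--(H3) with the \emph{same} constants $\lambda,\Lambda,\mu,\sigma$ (for (H3) one takes $\tilde A_\delta:=r^{-1}A_{r\delta}$). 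Hence $M_{\mathcal L}^- v(y)=r^\sigma M_{\mathcal L}^- u(ry)$ and $|\nabla v(y)|=r|\nabla u(ry)|$, so $v$ is a viscosity supersolution of $M_{\mathcal L}^- v - C_0 r^{1-\sigma}|\nabla v| = C_1 r^\sigma$ in $B_2$. Since $0<r<1$ and $C_0=0$ when $\sigma<1$, the coefficient $C_0 r^{1-\sigma}$ is bounded by $C_0$, so I would want a version of Lemma~\ref{lem:har} uniform in the gradient coefficient on $[0,C_0]$; this is fine because the constants $\epsilon_3,C$ in Lemma~\ref{lem:har} depend on $C_0$ only as an upper bound, so applying it with coefficient $C_0$ suffices (alternatively absorb $r^{1-\sigma}\le 1$).

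Next I would apply Lemma~\ref{lem:har} to $v$, which is nonnegative on $\mathbb R^n$, to get
\[
\left(\int_{B_1} v^{\epsilon_3}\,dy\right)^{1/\epsilon_3}\le C\big(\inf_{B_1} v + C_1 r^\sigma\big)\le C\big(v(0)+C_1 r^\sigma\big)=C\big(u(0)+C_1 r^\sigma\big).
\]
Then for any $t\ge 0$, Chebyshev's inequality in the set $\{v>t\}\cap B_1$ gives $|\{v>t\}\cap B_1|\,t^{\epsilon_3}\le \int_{B_1} v^{\epsilon_3}\,dy\le C^{\epsilon_3}(u(0)+C_1 r^\sigma)^{\epsilon_3}$. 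Finally I would undo the scaling: $\{v>t\}\cap B_1 = r^{-1}(\{u>t\}\cap B_r)$, so $|\{u>t\}\cap B_r| = r^n|\{v>t\}\cap B_1|\le C r^n (u(0)+C_1 r^\sigma)^{\epsilon_3} t^{-\epsilon_3}$, which is exactly $(\ref{eq:har})$ after renaming the constant.

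I do not expect a serious obstacle here; the statement is explicitly labeled "a direct Corollary of Lemma 4.2," so the only real content is the bookkeeping of the scaling. The one point requiring a little care is the scale-invariance of the class $\mathcal{L}$ — in particular that (H1), (H2) and (H3) are preserved with the same $\lambda,\Lambda,\mu$ under $K\mapsto r^{n+\sigma}K(r\cdot)$ — and the handling of the first-order term, where the factor $r^{1-\sigma}$ appears; since $r<1$ this factor is $\le 1$ when $\sigma\ge 1$ and is irrelevant when $\sigma<1$ (as then $C_0=0$), so the gradient coefficient stays in $[0,C_0]$ and the constants from Lemma~\ref{lem:har} apply uniformly. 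Everything else is Chebyshev plus a change of variables.
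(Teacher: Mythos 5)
Your proposal follows the paper's proof essentially verbatim: rescale via $v(y)=u(ry)$, use the scale invariance of $\mathcal{L}$ (Lemma 2.2 in \cite{SS}) to get a supersolution inequality in $B_2$, apply Lemma \ref{lem:har}, and finish with Chebyshev and a change of variables. The only slip is the exponent on the rescaled gradient coefficient: the correct scaling gives $C_0 r^{\sigma-1}|\nabla v|$, not $C_0 r^{1-\sigma}|\nabla v|$; with your exponent the claim that the coefficient is bounded by $C_0$ would fail for $\sigma>1$ and $r<1$, whereas with $r^{\sigma-1}\le 1$ (for $\sigma\ge 1$, and $C_0=0$ for $\sigma<1$) the absorption works exactly as you intend. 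This is a sign typo rather than a gap, and the rest of the argument is correct.
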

\begin{proof}
Now let $v(x)=u(rx)$. By Lemma 2.2 in \cite{SS}, we have
\begin{equation}\label{eq:4.3}
M_{\mathcal{L}}^-v-C_0r^{\sigma-1}|\nabla v|\leq C_1r^{\sigma},\quad\text{in $B_2$}.
\end{equation}
Now we apply Lemma $\ref{lem:har}$ to $(\ref{eq:4.3})$. Thus, for any $t\geq 0$, we have
\begin{equation*}
t|\{v>t\}\cap B_1|^{\frac{1}{\epsilon_3}}\leq\left(\int_{B_{1}}v^{\epsilon_3} dx\right)^{\frac{1}{\epsilon_3}}\leq C(\inf_{B_{1}}v+C_1r^{\sigma})\leq C(v(0)+C_1r^{\sigma}).
\end{equation*}
Then
\begin{equation*}
r^{-n}|\{u>t\}\cap B_r|\leq |\{v>t\}\cap B_1|\leq C(v(0)+C_1r^{\sigma})^{\epsilon_3}t^{-\epsilon_3}=C(u(0)+C_1r^{\sigma})^{\epsilon_3}t^{-\epsilon_3}.
\end{equation*}
Therefore, $(\ref{eq:har})$ holds.
\end{proof}
Then we follow the idea in \cite{LL1} to obtain a H\"older estimate.
\begin{theorem}\label{thm:hol}
Assume $0<\sigma_0\leq\sigma<2$, $C_0\geq 0$, and further assume $C_0=0$ if $\sigma<1$. For any $\epsilon>0$, let $\mathcal{F}$ be a class of bounded continuous functions $u$ in $\mathbb R^n$ such that, $-\frac{1}{2}\leq u\leq\frac{1}{2}$ in $\mathbb R^n$, $u$ is a viscosity subsolution of $M_{\mathcal{L}}^+u+C_0|\nabla u|=-\frac{\epsilon}{2}$ in $B_1$, $w=\sup_{u\in\mathcal{F}}u$ is a discontinuous viscosity supersolution of $M_{\mathcal{L}}^-w-C_0|\nabla w|=\frac{\epsilon}{2}$ in $B_1$. Then there exist constants $\epsilon_4$, $\alpha$ and $C$ such that, if $\epsilon<\epsilon_4$,
\begin{equation*}
-C|x|^\alpha\leq w_*(x)-w^*(0)\leq w^*(x)-w_*(0)\leq C|x|^\alpha,
\end{equation*}
where $\epsilon_4$, $\alpha$ and $C$ depend on $\sigma_0$, $\lambda$, $\Lambda$, $C_0$, $n$ and $\mu$.
\end{theorem}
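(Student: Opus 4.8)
The plan is to run the standard diminish-of-oscillation argument from \cite{LL1}, adapted so that it applies simultaneously to the lower envelope $w_*$ (through the weak Harnack inequality, Corollary~\ref{cor:har}) and to the upper envelope $w^*$ (through the subsolution property of each $u\in\mathcal F$). The key quantitative input is an iterated oscillation decay: I will show there exist $\alpha\in(0,1)$ and a constant so that for a geometric sequence $r_k=2^{-k}$ one has $\operatorname{osc}_{B_{r_k}}$ — meaning $\sup_{B_{r_k}}w^*-\inf_{B_{r_k}}w_*$ — bounded by $M r_k^\alpha$ for some fixed $M$, with the base case $k=0$ immediate from $-\tfrac12\le w\le\tfrac12$. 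Rescaling (as in the proof of Corollary~\ref{cor:har}, using Lemma~2.2 of \cite{SS}) reduces the inductive step to a single lemma: if $-\tfrac12\le w\le\tfrac12$ on $\mathbb R^n$, $w$ is a discontinuous supersolution of $M^-_{\mathcal L}w-C_0|\nabla w|=\tfrac\epsilon2$ in $B_1$ and the sup of subsolutions of $M^+_{\mathcal L}u+C_0|\nabla u|=-\tfrac\epsilon2$, then $\operatorname{osc}_{B_{1/2}}w\le(1-\theta)\operatorname{osc}_{B_1}w$ for a fixed $\theta\in(0,1)$, provided $\epsilon<\epsilon_4$; the catch compared to the local case is that to iterate I must also carry along the global control $|w(x)-(\text{value near }0)|\le M|x|^\alpha$ for $|x|\ge1$, so the function entering the next scale is globally bounded by $M\,r_k^\alpha\,(1+(|x|/r_k)^\alpha)$-type bounds, which is exactly what makes the nonlocal terms $M^\pm_{\mathcal L}$ on the tail finite and small — here is where the precise integrability from (H1), (H2) and Lemma~2.3 of \cite{SS} is used.

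For the oscillation-decay lemma itself I would argue by a normalization: replacing $w$ by $w-m_1$ where $m_1=\inf_{B_1}w_*$ (or, in the dichotomy below, $w$ by $M_1-w$ with $M_1=\sup_{B_1}w^*$), we may assume $w_*\ge0$ on $B_1$. Then I split into two cases according to whether $\{w_*\ge\tfrac12\operatorname{osc}\}\cap B_{1/4}$ has measure at least half of $|B_{1/4}|$ or not. In the first case, $v:=w_*$ is a nonnegative supersolution of $M^-_{\mathcal L}v-C_0|\nabla v|=C_1$ in $B_{1/2}$ with $C_1=\tfrac\epsilon2$ small, so Corollary~\ref{cor:har} (applied at scale $r=\tfrac14$, say, so that $B_{2r}=B_{1/2}$) forces a pointwise lower bound $\inf_{B_{1/4}}w_*\ge c\operatorname{osc}-C\epsilon$: the measure estimate $|\{v>t\}\cap B_r|\le Cr^n(v(0)+C_1r^\sigma)^{\epsilon_3}t^{-\epsilon_3}$ cannot hold with $t\sim\operatorname{osc}$ unless $v(0)$, hence $\inf_{B_{1/4}}w_*$, is comparably large, so $\inf_{B_{1/4}}w_*$ is bounded below by a fixed fraction of the oscillation (after choosing $\epsilon_4$ small) and the oscillation over $B_{1/4}$ drops. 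In the complementary case, $\{M_1-w_*\ge\tfrac12\operatorname{osc}\}\cap B_{1/4}$ is more than half of $B_{1/4}$, and the same argument applied to the nonnegative supersolution $M_1-w$ — which is a supersolution of $M^-_{\mathcal L}(M_1-w)-C_0|\nabla(M_1-w)|=\tfrac\epsilon2$ because $M^-_{\mathcal L}(-\,\cdot\,)=-M^+_{\mathcal L}(\cdot)$ and $w$ is, via $\mathcal F$, a supersolution of the $M^+$-equation — yields $\sup_{B_{1/4}}w^*\le M_1-c\operatorname{osc}+C\epsilon$, again a decay. In either case $\operatorname{osc}_{B_{1/4}}(w^*,w_*)\le(1-\theta)\operatorname{osc}_{B_1}(w^*,w_*)+C\epsilon$, and absorbing $C\epsilon$ into the geometric iteration (possible once $\epsilon<\epsilon_4$) gives the clean $M r_k^\alpha$ bound.

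The main obstacle, and the place where the nonlocal nature of the problem bites, is propagating the estimate across scales while keeping the tail contributions to $M^\pm_{\mathcal L}$ under control. Unlike in the PDE case, the supersolution property at scale $r_k$ "sees" the values of $w$ everywhere in $\mathbb R^n$, so at each step I must feed into Corollary~\ref{cor:har} not merely $0\le v$ on $B_{2r}$ but $0\le v$ on all of $\mathbb R^n$; this is achieved by the standard device of replacing $w$ at scale $k$ by $\min\{(w-m_k)/(M\,r_k^\alpha),\,1+\text{tail correction}\}$ and checking this truncation is still a supersolution in $B_{2r_k}$ with a right-hand side that is $C\epsilon r_k^\sigma$ plus an error coming from the truncation, which is summable precisely because of the previously established $|x|^\alpha$-growth and the finiteness $\int\min\{|z|^2,1\}K(z)\,dz<\infty$. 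Choosing $\alpha$ small relative to $\sigma_0$ makes every one of these error terms geometrically small, closing the induction; a secondary technical point is that $w_*$ is only lower-semicontinuous and $w$ only a \emph{discontinuous} supersolution, so Corollary~\ref{cor:har} must be invoked for $w_*$ itself, which is exactly how that corollary and Lemma~\ref{lem:har} are stated (for supersolutions, with $u(0)$ read as the lsc value), so no extra regularization is needed there.
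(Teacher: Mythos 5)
Your overall strategy coincides with the paper's: a diminish-of-oscillation induction on dyadic scales, with the weak Harnack inequality (Corollary \ref{cor:har}) supplying the pointwise gain in each case of a measure dichotomy, and with the tail of the rescaled function truncated ($v^+=\max\{v,0\}$) and its contribution to $M^\pm_{\mathcal L}$ absorbed into the right-hand side via {\rm(H1)} by taking $\alpha$ small relative to $\sigma_0$. Your Case 1 matches the paper's. The problem is in your Case 2.

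There you take the dichotomy hypothesis $|\{M_1-w_*\ge\tfrac12\mathrm{osc}\}\cap B_{1/4}|\ge\tfrac12|B_{1/4}|$ and then apply the weak Harnack inequality to the \emph{discontinuous} supersolution $M_1-w$. But the supersolution property of $M_1-w$ lives on its lower semicontinuous envelope, which is $M_1-w^*$, not $M_1-w_*$; Corollary \ref{cor:har} therefore yields a measure bound for the superlevel sets of $M_1-w^*$, while your dichotomy only gives a lower bound on the measure of a superlevel set of the \emph{larger} function $M_1-w_*$. Since $\{M_1-w^*\ge t\}\subset\{M_1-w_*\ge t\}$ and the difference set $\{w^*>w_*\}$ can have positive measure, the density hypothesis needed to run the Harnack argument on $M_1-w^*$ does not follow, and the step fails as written. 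This is exactly why the theorem is stated for $w=\sup_{u\in\mathcal F}u$ rather than for an arbitrary discontinuous solution: the paper resolves the issue by working with each continuous $u\in\mathcal F$ separately. Since $u$ is continuous and $u\le w$, one has $u\le w_*$, hence $|\{u\le\tfrac{M_k+m_k}{2}\}\cap B|\ge\tfrac12|B|$, and $M_k-u$ (suitably rescaled) is a genuine continuous nonnegative supersolution to which Corollary \ref{cor:har} applies; the resulting upper bound $u\le M_k-\epsilon_4\tfrac{M_k-m_k}{2}$ on an open ball is uniform in $u$, so it passes to $w$ and then to $w^*$. Inserting this device repairs your Case 2 and the rest of your argument goes through.
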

\begin{proof}
We claim that there exist an increasing sequence $\{m_k\}_k$ and a decreasing sequence $\{M_k\}_k$ such that $M_k-m_k=8^{-\alpha k}$ and $m_k\leq\inf_{B_{8^{-k}}}w_*\leq \sup_{B_{8^{-k}}}w^*\leq M_k$. We will prove this claim by induction.

For $k=0$, we choose $m_0=-\frac{1}{2}$ and $M_0=\frac{1}{2}$ since $-\frac{1}{2}\leq u\leq \frac{1}{2}$ for any $u\in\mathcal{F}$. Assume that we have the sequences up to $m_k$ and $M_k$. In $B_{8^{-k-1}}$, we have either
\begin{equation}\label{eq4.1}
|\{w_*\geq \frac{M_k+m_k}{2}\}\cap B_{8^{-k-1}}|\geq \frac{|B_{8^{-k-1}}|}{2},
\end{equation}
or
\begin{equation}\label{eq4.2}
|\{w_*\leq \frac{M_k+m_k}{2}\}\cap B_{8^{-k-1}}|\geq \frac{|B_{8^{-k-1}}|}{2}.
\end{equation}

Case 1: $(\ref{eq4.1})$ holds. 

We define
\begin{equation*}
v(x):=\frac{w_*(8^{-k}x)-m_k}{\frac{M_k-m_k}{2}}.
\end{equation*}
Thus, $v\geq 0$ in $B_1$ and 
\begin{equation*}
|\{v\geq 1\}\cap B_{\frac{1}{8}}|\geq \frac{|B_{\frac{1}{8}}|}{2}.
\end{equation*}
Since $w$ is a discontinuous viscosity supersolution of $M_{\mathcal{L}}^-w-C_0|\nabla w|=\frac{\epsilon}{2}$ in $B_1$, then $v$ is a viscosity supersolution of 
\begin{equation*}
M_{\mathcal{L}}^-v-C_08^{k(1-\sigma)}|\nabla v|=8^{k(\alpha-\sigma)}\epsilon\quad \text{in $B_{8^k}$}.\end{equation*}
We notice that $C_0=0$ if $\sigma<1$ and choose $\alpha<\sigma_0$. Thus, for any $0<\sigma<2$, $v$ is a viscosity supersolution of
\begin{equation*}
M_{\mathcal{L}}^-v-C_0|\nabla v|=\epsilon\quad \text{in $B_{8^k}$}.
\end{equation*}
By the inductive assumption, we have, for any $k\geq j\geq 0$,
\begin{equation}\label{eq4.3}
v\geq\frac{m_{k-j}-m_k}{\frac{M_k-m_k}{2}}\geq\frac{m_{k-j}-M_{k-j}+M_k-m_k}{\frac{M_k-m_k}{2}}= 2(1-8^{\alpha j})\quad \text{in $B_{8^{j}}$}.
\end{equation}
Moreover, we have 
\begin{equation}\label{eq4.4}
v\geq 2\cdot 8^{\alpha k}[-\frac{1}{2}-(\frac{1}{2}-8^{-\alpha k})]=2(1-8^{\alpha k})\quad \text{in $B_{8^{k}}^c$}.
\end{equation}
By $(\ref{eq4.3})$ and $(\ref{eq4.4})$, we have 
\begin{equation*}
v(x)\geq -2(|8x|^\alpha-1),\quad \text{for any $x\in B_1^c$}.
\end{equation*}
We define 
\begin{equation*}
v^+(x):=\max\{v(x),0\}\quad\text{and}\quad v^-(x):=-\min\{v(x),0\}.
\end{equation*}
Since $v\geq 0$ in $B_1$, $v^-(x)=0$ and $\nabla v^-(x)=0$ for any $x\in B_1$. By {\rm(H1)}, we can choose sufficiently small $\alpha$ independent of $\sigma$ such that, for any $x\in B_{\frac{3}{4}}$ and $\sigma_0\leq\sigma<2$,
\begin{eqnarray*}
M_{\mathcal{L}}^-v^+(x)&\leq& M_{\mathcal{L}}^-v(x)+M_{\mathcal{L}}^+v^-(x)\\
&\leq&M_{\mathcal{L}}^-v(x)+\sup_{K\in\mathcal{L}}\int_{\mathbb R^n}\delta_zv^-(x)K(z)dz\\
&\leq&M_{\mathcal{L}}^-v(x)+\sup_{K\in\mathcal{L}}\int_{B_{\frac{1}{4}}^c\cap\{v(x+z)<0\}} v^-(x+z)K(z)dz\\
&\leq&M_{\mathcal{L}}^-v(x)+\sup_{K\in\mathcal{L}}\int_{B_{\frac{1}{4}}^c}\max\{2(|8(x+z)|^{\alpha}-1),0\}K(z)dz\\
&\leq&M_{\mathcal{L}}^-v(x)+2(2-\sigma)\Lambda\sum_{l=0}^{+\infty}\left(\frac{2^l}{4}\right)^{-\sigma}\left(2^{(l+4)\alpha}-1\right)\\
&\leq&M_{\mathcal{L}}^-v(x)+2^{13}(2-\sigma_0)\Lambda\left(\frac{2^{4(\alpha-\sigma_0)}}{1-2^{\alpha-\sigma_0}}-\frac{2^{-4\sigma_0}}{1-2^{-\sigma_0}}\right)\\
&\leq&M_{\mathcal{L}}^-v(x)+\epsilon.
\end{eqnarray*}
Therefore, we have 
\begin{equation*}
M_{\mathcal{L}}^-v^+-C_0|\nabla v^+|\leq 2\epsilon,\quad\text{in $B_{\frac{3}{4}}$}.
\end{equation*}
Given any point $x\in B_{\frac{1}{8}}$, we can apply Corollary $\ref{cor:har}$ in $B_{\frac{1}{4}}(x)$ to obtain
\begin{equation*}
C(v^+(x)+2\epsilon)^{\epsilon_3}\geq|\{v^+>1\}\cap B_{\frac{1}{4}}(x)|\geq |\{v^+>1\}\cap B_{\frac{1}{8}}|\geq\frac{|B_{\frac{1}{8}}|}{2}.
\end{equation*}
Thus, we can choose sufficiently small $\epsilon_4$ such that $v^+\geq \epsilon_4$ in $B_{\frac{1}{8}}$ if $\epsilon<\epsilon_4$. Therefore,
\begin{equation*}
v(x)=\frac{w_*(8^{-k}x)-m_k}{\frac{M_k-m_k}{2}}\geq \epsilon_4\quad \text{in $B_{\frac{1}{8}}$}.
\end{equation*}
If we set $m_{k+1}=m_k+\epsilon_4\frac{M_k-m_k}{2}$ and $M_{k+1}=M_k$, we must have $m_{k+1}\leq\inf_{B_{8^{-k-1}}}w_*\leq \sup_{B_{8^{-k-1}}}w^*\leq M_{k+1}$. 

Case 2: $(\ref{eq4.2})$ holds. 

For any $u\in\mathcal{F}$, we obtain that $u\in C^0(\mathbb R^n)$ is a viscosity subsolution of $M_{\mathcal{L}}^+u+C_0|\nabla u|=-\frac{\epsilon}{2}$ in $B_1$ and $u\leq w_*$ in $\mathbb R^n$. Thus, we have 
\begin{equation*}
|\{u\leq\frac{M_k+m_k}{2}\}\cap B_{8^{-k-1}}|\geq \frac{|B_{8^{-k-1}}|}{2}.
\end{equation*}
We define
\begin{equation*}
v_{ u}(x):=\frac{M_k-u(8^{-k}x)}{\frac{M_k-m_k}{2}}.
\end{equation*}
Thus, $v_{ u}\geq 0$ in $B_1$ and 
\begin{equation*}
|\{v_{ u}\geq 1\}\cap B_{\frac{1}{8}}|\geq \frac{|B_{\frac{1}{8}}|}{2}.
\end{equation*}
Since $ u$ is a viscosity subsolution of $M_{\mathcal{L}}^+ u+C_0|\nabla u|=-\frac{\epsilon}{2}$ in $B_1$,
then $v_{ u}$ is a viscosity supersolution of 
\begin{equation*}
M_{\mathcal{L}}^-v_{ u}-C_0|\nabla v_u|= \epsilon\quad \text{in $B_{8^k}$}.
\end{equation*}
Similar to Case 1, we have, if $\epsilon<\epsilon_4$,
\begin{equation*}
v_{ u}(x)=\frac{M_k- u(8^{-k}x)}{\frac{M_k-m_k}{2}}\geq \epsilon_4\quad \text{in $B_{\frac{1}{8}}$},
\end{equation*}
which implies
\begin{equation*}
 u(8^{-k}x)\leq M_k-\epsilon_4\frac{M_k-m_k}{2}\quad \text{in $B_{\frac{1}{8}}$}.
\end{equation*}
By the definition of $w$, we have 
\begin{equation*}
w^*(8^{-k}x)\leq M_k-\epsilon_4\frac{M_k-m_k}{2}\quad \text{in $B_{\frac{1}{8}}$}.
\end{equation*}
If we set $m_{k+1}=m_k$ and $M_{k+1}=M_k-\epsilon_4\frac{M_k-m_k}{2}$, we must have $m_{k+1}\leq\inf_{B_{8^{-k-1}}}w_*\leq \sup_{B_{8^{-k-1}}}w^*\leq M_{k+1}$. 

Therefore, in both of the cases, we have $M_{k+1}-m_{k+1}=(1-\frac{\epsilon_4}{2})8^{-\alpha k}$. We then choose $\alpha$ and $\epsilon_4$ sufficiently small such that $(1-\frac{\epsilon_4}{2})=8^{-\alpha}$. Thus we have $M_{k+1}-m_{k+1}=8^{-\alpha(k+1)}$.
\end{proof}

\begin{theorem}\label{cor:hol1}
Assume that $0<\sigma_0\leq\sigma<2$ and $I(x,0,0)$ is bounded in $\Omega$. Assume that $I$ is uniformly elliptic and satisfies {\rm (A0)}, {\rm (A2)}. Let $w$ be the bounded discontinuous viscosity solution
of $(\ref{eq:gennon})$ constructed in Theorem $\ref{thm:per}$. Then, for any sufficiently small $\tilde \delta>0$, there exists a constant $C$ such that $w\in C^{\alpha}(\Omega)$ and 
\begin{equation*}
\|w\|_{C^\alpha(\bar\Omega_{\tilde\delta})}\leq C(C_2+m(C_2)+\|I(\cdot,0,0)\|_{L^{\infty}(\Omega)}),
\end{equation*}
where $\alpha$ is given in Theorem $\ref{thm:hol}$, $C_2:=\max\{\|\underbar u\|_{L^{\infty}(\mathbb R^n)},\|\bar u\|_{L^{\infty}(\mathbb R^n)}\}$ and $C$ depends on $\sigma_0$, $\tilde\delta$, $\lambda$, $\Lambda$, $C_0$, $n$, $\mu$.
\end{theorem}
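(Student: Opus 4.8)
The plan is to deduce the estimate from Theorem~\ref{thm:hol} (which, via Corollary~\ref{cor:har}, carries all the analytic weight) by first turning the uniform ellipticity of $I$ into extremal inequalities for $M_{\mathcal L}^\pm$, and then rescaling and normalizing around an arbitrary interior point so that the hypotheses of Theorem~\ref{thm:hol} are met. Throughout, recall that $w=\sup_{u\in\mathcal F}u$, where $\mathcal F$ is the family of continuous viscosity subsolutions of $I=0$ in $\Omega$ with $\underbar u\le u\le\bar u$; hence $|u|\le C_2$ for $u\in\mathcal F$, $|w_*|\le C_2$, and $w_*$ is a viscosity supersolution of $I=0$ in $\Omega$.

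\emph{Step 1: extremal inequalities.} Set $c_0:=\|I(\cdot,0,0)\|_{L^\infty(\Omega)}+m(C_2)$. First I would show that every $u\in\mathcal F$ is a viscosity subsolution of $M_{\mathcal L}^+u+C_0|\nabla u|=-c_0$ in $\Omega$, and that $w_*$ is a viscosity supersolution of $M_{\mathcal L}^-w-C_0|\nabla w|=c_0$ in $\Omega$. For the first: if $u-\varphi$ has a maximum at $x\in\Omega$ with $\varphi\in C_b^2(\mathbb R^n)$, then $I(x,u(x),\varphi(\cdot))\le0$, and feeding $r=0$, $\psi\equiv0$, $s=u(x)$ into the right-hand uniform ellipticity inequality — so that $\varphi$ is the test function paired with $s$ — gives $I(x,0,0)-I(x,u(x),\varphi)\le M_{\mathcal L}^+\varphi(x)+C_0|\nabla\varphi(x)|+m(|u(x)|)$, whence $M_{\mathcal L}^+\varphi(x)+C_0|\nabla\varphi(x)|\ge-c_0$. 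The supersolution statement for $w_*$ is the mirror image, using the left-hand inequality at a minimum of $w_*-\phi$ with $r=0$, $\psi\equiv0$, $s=w_*(x)$ and $I(x,w_*(x),\phi)\ge0$. (Lemma~\ref{lem:equiv} is needed only to know that {\rm (A0)}--{\rm (A4)} hold, so that $w$ from Theorem~\ref{thm:per} is well defined.)

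\emph{Step 2: normalization and application of Theorem~\ref{thm:hol}.} Let $\epsilon_4,\alpha,C$ be the constants produced by Theorem~\ref{thm:hol}, depending only on $\sigma_0,\lambda,\Lambda,C_0,n,\mu$ (we may take $\epsilon_4\le1$). Given $\tilde\delta>0$, set $\rho:=\min\{\tilde\delta,\tfrac12\epsilon_4^{1/\sigma_0}\}\in(0,1)$ and $b:=2\bigl(C_2+m(C_2)+\|I(\cdot,0,0)\|_{L^\infty(\Omega)}\bigr)$ (if $b=0$ then $w\equiv0$ and there is nothing to prove). Fix $x_0\in\Omega_{\tilde\delta}$, so $B_\rho(x_0)\subset\Omega$, and put $\tilde u(x):=b^{-1}u(x_0+\rho x)$, $\tilde w:=b^{-1}w(x_0+\rho\cdot)=\sup_{u\in\mathcal F}\tilde u$; since $\|w\|_{L^\infty}\le C_2\le b/2$ and $|u|\le C_2$, all of these are valued in $[-\tfrac12,\tfrac12]$. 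Using that $\mathcal L$ is invariant under $K(z)\mapsto\rho^{n+\sigma}K(\rho z)$ (one checks {\rm (H0)}--{\rm (H3)} directly, cf.\ Lemma~2.2 of~\cite{SS}) and that $M_{\mathcal L}^\pm$ is translation invariant, one has $M_{\mathcal L}^\pm\bigl(v(x_0+\rho\cdot)\bigr)(x)=\rho^\sigma(M_{\mathcal L}^\pm v)(x_0+\rho x)$ and $\nabla\bigl(v(x_0+\rho\cdot)\bigr)(x)=\rho(\nabla v)(x_0+\rho x)$, so by Step~1 each $\tilde u$ is a viscosity subsolution of $M_{\mathcal L}^+\tilde u+C_0\rho^{\sigma-1}|\nabla\tilde u|=-c_0\rho^\sigma/b$ in $B_1$, and $\tilde w_*$ is a viscosity supersolution of $M_{\mathcal L}^-\tilde w-C_0\rho^{\sigma-1}|\nabla\tilde w|=c_0\rho^\sigma/b$ in $B_1$. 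Since $\rho<1$ and $C_0=0$ when $\sigma<1$, the factor $\rho^{\sigma-1}$ may be replaced by $1$; and with $\epsilon:=2c_0\rho^\sigma/b$ we get $\epsilon\le\rho^\sigma\le\rho^{\sigma_0}<\epsilon_4$ because $c_0\le b/2$. Theorem~\ref{thm:hol} then yields
\[
-C|x|^\alpha\ \le\ \tilde w_*(x)-\tilde w^*(0)\ \le\ \tilde w^*(x)-\tilde w_*(0)\ \le\ C|x|^\alpha .
\]

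\emph{Step 3: conclusion, and the hard part.} Taking $x=0$ forces $\tilde w^*(0)=\tilde w_*(0)$, i.e.\ $w^*(x_0)=w_*(x_0)$; as $x_0\in\Omega_{\tilde\delta}$ and $\tilde\delta>0$ are arbitrary, $w$ is continuous in $\Omega$. Unscaling with $y=x_0+\rho x$, $|y-x_0|<\rho$, gives $|w(y)-w(x_0)|\le bC\rho^{-\alpha}|y-x_0|^\alpha$, while for $x_0,y\in\bar\Omega_{\tilde\delta}$ with $|y-x_0|\ge\rho$ the difference quotient is at most $2C_2\rho^{-\alpha}$. Since $b$ and $\|w\|_{L^\infty(\bar\Omega_{\tilde\delta})}$ are bounded by a constant multiple of $C_2+m(C_2)+\|I(\cdot,0,0)\|_{L^\infty(\Omega)}$, and $\rho$ depends only on $\tilde\delta,\sigma_0,\lambda,\Lambda,C_0,n,\mu$, this gives $\|w\|_{C^\alpha(\bar\Omega_{\tilde\delta})}\le C(C_2+m(C_2)+\|I(\cdot,0,0)\|_{L^\infty(\Omega)})$ with the stated dependence of $C$, and letting $\tilde\delta\downarrow0$ shows $w\in C^\alpha(\Omega)$. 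The hard part is not conceptual — everything deep is packaged in Theorem~\ref{thm:hol} and the weak Harnack inequality — but lies in getting Steps~1--2 exactly right: choosing the correct one of the two ellipticity inequalities and the correct assignment of $(r,s,\varphi,\psi)$ so that $M_{\mathcal L}^+$ (not $M_{\mathcal L}^-$) accompanies the test function, thereby matching the ``extremal'' sub/supersolution conventions used in Theorem~\ref{thm:hol}; and controlling the first-order term under rescaling, whose coefficient becomes $C_0\rho^{\sigma-1}$ and is harmless only because $\rho<1$ and $C_0=0$ precisely in the range $\sigma<1$ where $\rho^{\sigma-1}$ would blow up.
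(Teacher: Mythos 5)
Your proposal is correct and follows essentially the same route as the paper: derive from uniform ellipticity that each $u\in\mathcal F$ is a viscosity subsolution of $M_{\mathcal L}^+u+C_0|\nabla u|=-(m(C_2)+\|I(\cdot,0,0)\|_{L^\infty(\Omega)})$ and that $w_*$ is a viscosity supersolution of the corresponding $M_{\mathcal L}^-$ inequality, then normalize and invoke Theorem \ref{thm:hol}. The paper compresses your Steps 2--3 into the single phrase ``by normalization,'' whereas you spell out the rescaling, the smallness check $\epsilon<\epsilon_4$, and the treatment of the drift coefficient $C_0\rho^{\sigma-1}$ -- all correctly.
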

\begin{proof}
It is obvious that $\|u\|_{L^{\infty}(\mathbb R^n)}\leq C_2$ if $u\in\mathcal{F}$. Since $I$ is uniformly elliptic, we have
\begin{eqnarray*}
I(x,0,0)-I(x,u(x),u(\cdot))\leq M_{\mathcal{L}}^+u(x)+C_0|\nabla u(x)|+m(C_2),\quad \text{in $\Omega$}.
\end{eqnarray*} 
Since $u$ is a viscosity subsolution of $I=0$ in $\Omega$, we have
\begin{equation*}
-m(C_2)-\|I(\cdot,0,0)\|_{L^{\infty}(\Omega)}\leq M_{\mathcal{L}}^+u+C_0|\nabla u|,\quad \text{in $\Omega$}.
\end{equation*} 
Similarly, we have 
\begin{equation*}
M_{\mathcal{L}}^-w_*-C_0|\nabla w_*|\leq m(C_2)+\|I(\cdot,0,0)\|_{L^{\infty}(\Omega)},\quad \text{in $\Omega$}.
\end{equation*}
By normalization, the result follows from Theorem $\ref{thm:hol}$.
\end{proof}

By applying Theorem $\ref{cor:hol1}$ to Bellman-Isaacs equation, we have the following corollary.
\begin{corollary}\label{cor:hol2}
Assume that $0<\sigma_0\leq\sigma<2$, $b_{ab}\equiv 0$ in $\Omega$ if $\sigma<1$ and $c_{ab}\geq 0$ in $\Omega$. Assume that $\{K_{ab}(\cdot,z)\}_{a,b,z}$, $\{b_{ab}\}_{a,b}$, $\{c_{ab}\}_{a,b}$, $\{f_{ab}\}_{a,b}$ are sets of uniformly  bounded and continuous functions in $\Omega$, uniformly in $a\in\mathcal{A}$, $b\in\mathcal{B}$, and $\{K_{ab}(x,\cdot):x\in\Omega, a\in\mathcal{A}, b\in\mathcal{B}\}$ are kernels satisfying {\rm (H0)}-{\rm (H3)}. Let $w$ be the bounded discontinuous viscosity solution of $(\ref{eq:belisa})$ constructed in Corollary $\ref{col:biper}$. Then, for any sufficiently small $\tilde \delta>0$, there exists a constant $C$ such that $w\in C^{\alpha}(\Omega)$ and
\begin{equation*}
\|w\|_{C^\alpha(\bar \Omega_{\tilde\delta})}\leq C(C_2+\sup_{a\in\mathcal{A},b\in\mathcal{B}}\|f_{ab}\|_{L^{\infty}(\Omega)}),
\end{equation*}
where $\alpha$, $C_2$ are given in Theorem $\ref{cor:hol1}$ and $C$ depends on $\sigma_0$, $\tilde\delta$, $\lambda$, $\Lambda$, $\sup_{a\in\mathcal{A},b\in\mathcal{B}}\|b_{ab}\|_{L^{\infty}(\Omega)}$, $\sup_{a\in\mathcal{A},b\in\mathcal{B}}\|c_{ab}\|_{L^{\infty}(\Omega)}$, $n$, $\mu$.
\end{corollary}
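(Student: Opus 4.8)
The plan is to recognize $(\ref{eq:belisa})$ as an instance of the abstract uniformly elliptic nonlocal equation treated in Theorem~\ref{cor:hol1}, check its three hypotheses, and then read off the estimate while translating the abstract quantities $C_0$, $m(\cdot)$ and $\|I(\cdot,0,0)\|_{L^{\infty}(\Omega)}$ into the coefficients of the Bellman--Isaacs operator.

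First I would set
\[
I(x,r,u(\cdot)):=\sup_{a\in\mathcal{A}}\inf_{b\in\mathcal{B}}\{-I_{ab}[x,u]+b_{ab}(x)\cdot \nabla u(x)+c_{ab}(x)r+f_{ab}(x)\},
\]
so that $(\ref{eq:belisa})$ is $I(x,u(x),u(\cdot))=0$ in $\Omega$ with $u=g$ in $\Omega^c$, and so that $w$ is exactly the discontinuous viscosity solution furnished by Corollary~\ref{col:biper}. As in the proof of that corollary, (H0)--(H2) give $(\ref{eq:levmea})$ for each $I_{ab}$, which together with the uniform boundedness and continuity of the coefficients yields (A0), and $c_{ab}\ge 0$ yields (A2). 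Next, $I(x,0,0)=\sup_a\inf_b f_{ab}(x)$, so $\|I(\cdot,0,0)\|_{L^{\infty}(\Omega)}\le \sup_{a\in\mathcal{A},b\in\mathcal{B}}\|f_{ab}\|_{L^{\infty}(\Omega)}<\infty$; in particular $I(\cdot,0,0)$ is bounded in $\Omega$.

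The only computation with any content is uniform ellipticity. I would use the elementary bounds
\[
\inf_{a,b}(A_{ab}-B_{ab})\le \sup_a\inf_b A_{ab}-\sup_a\inf_b B_{ab}\le \sup_{a,b}(A_{ab}-B_{ab}),
\]
with $A_{ab}$ the bracket of $I$ evaluated at $(r,\psi)$ and $B_{ab}$ the bracket at $(s,\varphi)$; the $f_{ab}$ terms cancel. In each surviving term $-I_{ab}[x,\psi]+I_{ab}[x,\varphi]=-L_{ab}(\psi-\varphi)(x)$, where $L_{ab}u(x)=\int_{\mathbb R^n}\delta_z u(x)K_{ab}(x,z)\,dz$; since (H0)--(H3) hold for the whole family $\{K_{ab}(x,\cdot)\}$, each $L_{ab}\in\mathcal{L}$, so $-L_{ab}(\psi-\varphi)(x)\le -M_{\mathcal{L}}^-(\psi-\varphi)(x)=M_{\mathcal{L}}^+(\varphi-\psi)(x)$. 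Moreover $b_{ab}(x)\cdot\nabla(\psi-\varphi)(x)\le C_0|\nabla(\psi-\varphi)(x)|$ with $C_0:=\sup_{a,b}\|b_{ab}\|_{L^{\infty}(\Omega)}$ (and $C_0=0$ when $\sigma<1$, since then $b_{ab}\equiv 0$), and $c_{ab}(x)(r-s)\le m(|r-s|)$ with the linear modulus $m(t):=(\sup_{a,b}\|c_{ab}\|_{L^{\infty}(\Omega)})\,t$. The symmetric estimate, with $M_{\mathcal{L}}^+$ replaced by $M_{\mathcal{L}}^-$, gives the lower bound, so $I$ is uniformly elliptic with this $C_0$ and this $m$.

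With (A0), (A2) and uniform ellipticity verified, Theorem~\ref{cor:hol1} applies and gives, for every sufficiently small $\tilde\delta>0$, a constant $C=C(\sigma_0,\tilde\delta,\lambda,\Lambda,C_0,n,\mu)$ with $\|w\|_{C^\alpha(\bar\Omega_{\tilde\delta})}\le C(C_2+m(C_2)+\|I(\cdot,0,0)\|_{L^{\infty}(\Omega)})$. To finish I would substitute $m(C_2)=(\sup_{a,b}\|c_{ab}\|_{L^{\infty}(\Omega)})C_2$ and absorb the slope into $C$, bound $\|I(\cdot,0,0)\|_{L^{\infty}(\Omega)}\le \sup_{a,b}\|f_{ab}\|_{L^{\infty}(\Omega)}$, and recall $C_0=\sup_{a,b}\|b_{ab}\|_{L^{\infty}(\Omega)}$; this yields the asserted bound with $C$ depending on $\sigma_0$, $\tilde\delta$, $\lambda$, $\Lambda$, $\sup_{a,b}\|b_{ab}\|_{L^{\infty}(\Omega)}$, $\sup_{a,b}\|c_{ab}\|_{L^{\infty}(\Omega)}$, $n$, $\mu$. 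The only point needing care --- hardly an obstacle --- is ensuring the sup--inf manipulation retains two-sided control and that (H0)--(H3) hold uniformly in $a,b$ so that $M_{\mathcal{L}}^{\pm}$ genuinely dominate each $L_{ab}$; the rest is bookkeeping of constants.
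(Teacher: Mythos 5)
Your proposal is correct and follows exactly the route the paper intends: the paper offers no written proof beyond ``apply Theorem \ref{cor:hol1}'', and you supply precisely the routine verification that is implicit --- the sup--inf sandwich inequality giving uniform ellipticity with $C_0=\sup_{a,b}\|b_{ab}\|_{L^\infty(\Omega)}$ and $m(t)=(\sup_{a,b}\|c_{ab}\|_{L^\infty(\Omega)})\,t$, the identification $I(x,0,0)=\sup_a\inf_b f_{ab}(x)$, and the absorption of $m(C_2)$ into the constant. No gaps.
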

\begin{remark}
In this section we assume our nonlocal equations satisfy the weak uniform ellipticity introduced in \cite{SS} mainly because, to our knowledge, this is the weakest assumption to get the weak Harnack inequality. In fact, our approach to get H\"older continuity of the discontinuous viscosity solution constructed by Perron's method could be applied to more general nonlocal equations as long as the weak Harnack inequality holds for such equation.
\end{remark}

\section{Continuous sub/supersolutions}

In this section we construct continuous sub/supersolutions in both uniformly elliptic and degenerate cases.

\subsection{Uniformly elliptic case}
In the uniformly elliptic case, we follow the idea in \cite{XJ1} to establish barrier functions. We define $v_\alpha(x)=((x_1-1)^+)^{\alpha}$ where $0<\alpha<1$ and $x=(x_1,x_2,...,x_n)$.

\begin{lemma}\label{lem:bar1}
Assume that $0<\sigma<2$. Then there exists a sufficiently small $\alpha>0$ such that  $M_{\mathcal{L}}^+v_{\alpha}((1+r)e_1)\leq-\epsilon_5r^{\alpha-\sigma}$ for any $r>0$ where $e_1=(1,0,...,0)$ and $\epsilon_5$ is some positive constant.
\end{lemma}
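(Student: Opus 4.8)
The plan is to estimate $M_{\mathcal{L}}^+v_\alpha$ at the point $x_r:=(1+r)e_1$ by splitting the defining integral for each $L\in\mathcal{L}$ into three regions determined by the geometry of the ``edge'' $\{x_1=1\}$, and to show that the strongly negative contribution of the concave profile $t\mapsto (t^+)^\alpha$ near the edge dominates the (controlled) positive contributions elsewhere. Write $d=\operatorname{dist}(x_r,\{x_1=1\})=r$. For $z$ with $|z|$ comparable to or larger than $r$, the increment $\delta_z v_\alpha(x_r)$ (the second difference, or first difference when $\sigma<1$) is governed by the one–dimensional function $h(t)=((t-1)^+)^\alpha$ evaluated along the $e_1$–direction; concavity of $h$ on $\{t>1\}$ and the fact that $h\equiv 0$ on $\{t\le 1\}$ make $\delta_z v_\alpha(x_r)$ substantially negative on a symmetric set of directions that, by (H3)(i)--(iv), carries kernel mass at least $(2-\sigma)\lambda\delta^{-n-\sigma}$ on each annulus $A_\delta$. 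By scaling (Lemma 2.2 in \cite{SS}) it suffices to prove the estimate at $r=1$ and then read off the exponent $\alpha-\sigma$; alternatively one keeps $r$ general and tracks homogeneity directly, since $v_\alpha$ is $\alpha$–homogeneous in $x_1-1$.

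The key steps, in order: first, reduce to the normalized point $x_1=(2,0,\dots,0)$ by the scaling $v_\alpha(x)\mapsto v_\alpha(r(x-e_1)+e_1+re_1)$, which rescales the operator by $r^{\sigma-1}$ for the gradient term (irrelevant here since we bound $M_{\mathcal{L}}^+$ of a fixed profile) and by $r^{-\sigma}$ overall on the nonlocal part, producing the claimed $r^{\alpha-\sigma}$ factor. Second, for a fixed $K\in\mathcal{L}$ decompose $\int_{\mathbb R^n}\delta_z v_\alpha(x_1)K(z)\,dz$ as $\int_{B_{1/4}}+\int_{B_{1/4}^c\cap B_R}+\int_{B_R^c}$ for a large fixed $R$. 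On $B_{1/4}$, where $v_\alpha$ is smooth (we are at distance $1$ from the edge), Taylor expand: $\delta_z v_\alpha(x_1)=\tfrac12 D^2v_\alpha(x_1)z\cdot z+o(|z|^2)$ when $\sigma\ge1$, and $D^2v_\alpha(x_1)=\alpha(\alpha-1)(2-1)^{\alpha-2}e_1\otimes e_1$ is negative (this is where $0<\alpha<1$ enters decisively); so $\int_{B_{1/4}}\delta_z v_\alpha K\le -c\alpha(1-\alpha)\int_{A_{\delta}\cap B_{1/4}}|z_1|^2\,(2-\sigma)\lambda\delta^{-n-\sigma}\,dz \le -c'\lambda(1-\alpha)$ using (H3) on the dyadic annuli inside $B_{1/4}$, with $c'$ uniform in $\sigma\in(0,2)$ after summing the convergent series; the symmetry $A_\delta=-A_\delta$ handles the gradient-correction term in $\delta_z$. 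On the far region $B_R^c$, $v_\alpha(x_1+z)\le C|z|^\alpha$, so $\int_{B_R^c}\delta_z v_\alpha(x_1)K(z)\,dz\le C\int_{B_R^c}|z|^\alpha K(z)\,dz$, which by (H1) (summing $\sum_{l}(2^lR)^{\alpha-\sigma}$) is bounded by $C(2-\sigma)\Lambda R^{\alpha-\sigma}/(1-2^{\alpha-\sigma})$, small once $R$ is large because $\alpha<\sigma_0\le\sigma$ — note this already appears verbatim in the proof of Theorem~\ref{thm:hol}. On the intermediate region $B_{1/4}^c\cap B_R$, crudely $|\delta_z v_\alpha(x_1)|\le C(R)$ and $\int_{B_{1/4}^c\cap B_R}K(z)\,dz\le C(R)(2-\sigma)\Lambda$ by (H1); but this term is not small, so it must be beaten by the negative $B_{1/4}$ term, which forces us to choose $\alpha$ small: the negative term is of size $-c'\lambda(1-\alpha)$, essentially independent of $\alpha$ for $\alpha$ small, while... here is the subtlety, addressed next.

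The main obstacle, and where care is needed, is that a fixed-size negative contribution from $B_{1/4}$ must outweigh a fixed-size positive contribution from the intermediate annulus $B_{1/4}^c\cap B_R$, and making $\alpha$ small does not shrink the latter. The resolution is that we are free to enlarge $R$: the positive contribution from $B_R^c$ is $O(R^{\alpha-\sigma_0})\to0$, so we first fix $R$ large enough that this is $\le \tfrac14 c'\lambda$; but the intermediate band $B_{1/4}^c\cap B_R$ then has bounded width and we must instead exploit that on it, too, $\delta_z v_\alpha(x_1)$ is \emph{nonpositive} in the $z_1$-direction by concavity of $h$, and strictly negative on a large subcone — so the ``bad'' sign only comes from transverse second differences of the smooth-away-from-edge part, which are $O(\alpha)$ in size because $v_\alpha$ is nearly constant transversally when $\alpha$ is small (its transverse derivatives vanish identically, $v_\alpha$ depending only on $x_1$). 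Thus on $B_{1/4}^c\cap B_R$ we actually get $\delta_z v_\alpha(x_1)\le C\alpha$ uniformly, and the whole intermediate integral is $\le C\alpha(2-\sigma)\Lambda$, which for $\alpha$ small is $\le\tfrac14 c'\lambda$. Combining the three bounds yields $M_{\mathcal{L}}^+v_\alpha(x_1)\le-\tfrac12 c'\lambda<0$, and undoing the scaling gives $M_{\mathcal{L}}^+v_\alpha((1+r)e_1)\le-\epsilon_5 r^{\alpha-\sigma}$ with $\epsilon_5=\tfrac12 c'\lambda$. The case $\sigma<1$ (no gradient correction, $\delta_z$ just a first difference, $C_0=0$) is easier and handled by the same decomposition, using that first differences of the concave, edge-vanishing $h$ are negative on the half of directions pointing toward the edge. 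One should double-check uniformity of all constants as $\sigma\uparrow2$ and $\sigma\downarrow\sigma_0$; the $(2-\sigma)$ weights in (H1)--(H3) are exactly designed so the dyadic sums converge uniformly, mirroring the computation already carried out in Theorem~\ref{thm:hol}.
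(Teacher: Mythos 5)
There is a genuine gap, and it lies exactly where you locate the decisive negative contribution. You claim that the near-field term satisfies $\int_{B_{1/4}}\delta_z v_\alpha\,K\,dz\le -c'\lambda(1-\alpha)$, a bound of fixed size as $\alpha\to0$, and you let this term beat everything else. But $D^2v_\alpha$ at the normalized point is $\alpha(\alpha-1)e_1\otimes e_1$, so the Taylor-expansion term is of size $O(\alpha(1-\alpha))$, not $O(1-\alpha)$: the factor $\alpha$ has been dropped. Since you also (correctly) estimate the intermediate region by $C\alpha(2-\sigma)\Lambda$ and the first-order corrections via (H2) are likewise $O(\alpha)$, every term in your final balance is $O(\alpha)$, and there is no reason the negative $O(\alpha\lambda)$ piece should dominate the positive $O(\alpha\Lambda)$ pieces. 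The argument as structured does not close. (Two smaller issues: a lower bound on $\int_{B_{1/4}}z_1^2K\,dz$ needs an extra step, since (H3) does not prevent $A_\delta$ from concentrating near $\{z_1=0\}$; and for $\sigma<1$, or for the gradient correction when $\sigma\ge1$, the first-order term must be controlled by (H2), not by the symmetry $A_\delta=-A_\delta$, which constrains only a subset of the kernel's support.)

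The mechanism that actually works — and which you gesture at in your opening paragraph but never implement quantitatively — is the one the paper uses: after rescaling, the integrand $((1+z_1)^+)^\alpha-1$ equals $-1$ on the entire half-space $\{z_1\le-1\}$, because $v_\alpha$ vanishes identically past the hyperplane $\{x_1=1\}$. Applying (H3) to a single annulus $B_{2\delta_3}\setminus B_{\delta_3}$ with $\delta_3$ large enough that the slab $\{|z_1|<1\}$ occupies less than half of $\mu$ times the annulus, and using the symmetry $A_{\delta_3}=-A_{\delta_3}$, gives $\int_{\{z_1\le-1\}}K\,dz\ge 2\epsilon_5>0$ uniformly over $K\in\mathcal{L}$. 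This is the fixed-size negative contribution. The positive contributions on $\{z_1>-1\}$ are then shown to be $O(\alpha)$ via (H1)--(H2) (with the extra term $\alpha z_1$ on $\{z_1\le-1\}$ handled by (H2) when $\sigma>1$), so choosing $\alpha$ small closes the argument. Your decomposition by $|z|$ should be replaced by the decomposition by the sign of $1+z_1$, and the role of concavity is only to make the $\{z_1>-1\}$ part small in $\alpha$, not to produce the leading negative term.
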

\begin{proof}
Case 1: $0<\sigma<1$. 

By Lemma 2.2 in \cite{SS}, we have, for any $r>0$ and $\alpha>0$,
\begin{eqnarray*}
M_{\mathcal{L}}^+v_{\alpha}((1+r)e_1)&=&\sup_{K\in\mathcal{L}}\int_{\mathbb R^n}\left(v_{\alpha}\left(\left(1+r\right)e_1+z\right)-v_{\alpha}((1+r)e_1)\right)K(z)dz\\
&=&\sup_{K\in\mathcal{L}}\int_{\mathbb R^n}\left(\left(\left(r+z_1\right)^+\right)^{\alpha}-r^\alpha\right)K(z)dz\\
&=&r^{\alpha-\sigma}\sup_{K\in\mathcal{L}}\int_{\mathbb R^n}\left(\left(\left(1+z_1\right)^+\right)^{\alpha}-1\right)r^{n+\sigma}K(rz)dz\\
&=&r^{\alpha-\sigma}\sup_{K\in\mathcal{L}}\int_{\mathbb R^n}\left(\left(\left(1+z_1\right)^+\right)^{\alpha}-1\right)K(z)dz\\
&\leq&r^{\alpha-\sigma}\left(\sup_{K\in\mathcal{L}}\int_{z_1>-1}\left(\left(1+z_1\right)^{\alpha}-1\right)K(z)dz-\inf_{K\in\mathcal{L}}\int_{z_1\leq-1}K(z)dz\right).
\end{eqnarray*}
By {\rm (H3)}, we have, for any $K\in\mathcal{L}$ and any $\delta>0$, there is a set $A_\delta$ satisfying $A_\delta\subset B_{2\delta}\setminus B_\delta$, $A_\delta=-A_\delta$, $|A_\delta|\geq \mu|B_{2\delta}\setminus B_\delta|$ and $K(z)\geq(2-\sigma)\lambda \delta^{-n-\sigma}$ in $A_\delta$. It is obvious that
\begin{equation*}
\mu_\delta:=\frac{|(B_{2\delta}\setminus B_\delta)\cap\{z;|z_1|< 1\}|}{|B_{2\delta}\setminus B_\delta|}\to 0\quad\text{as $\delta\to+\infty$}.
\end{equation*}
Thus, there exists $\delta_3>0$ such that $\mu_{\delta}<\frac{\mu}{2}$ if $\delta\geq\delta_3$. Then 
\begin{equation*}
\frac{|\{z;|z_1|\geq 1\}\cap A_{\delta_3}|}{|B_{2\delta_3}\setminus B_{\delta_3}|}\geq \frac{|A_{\delta_3}|-|(B_{2\delta_3}\setminus B_{\delta_3})\cap\{z;|z_1|< 1\}|}{|B_{2\delta_3}\setminus B_{\delta_3}|}\geq\frac{\mu}{2}.
\end{equation*}
By the symmetry of $A_{\delta_3}$, we have
\begin{equation*}
\frac{|\{z;z_1\leq -1\}\cap A_{\delta_3}|}{|B_{2\delta_3}\setminus B_{\delta_3}|}\geq \frac{\mu}{4}.
\end{equation*}
Therefore, we have, for any $K\in\mathcal{L}$,
\begin{equation}\label{eq:5.1}
\int_{z_1\leq -1}K(z)dz\geq \int_{\{z;z_1\leq -1\}\cap A_{\delta_3}}K(z)dz\geq  \frac{(2-\sigma)\lambda\mu}{4}\delta_3^{-n-\sigma}|B_{2\delta_3}\setminus B_{\delta_3}|=:2\epsilon_5.
\end{equation}
By {\rm (H1)} and {\rm (H2)}, we have, for any $K\in\mathcal{L}$,
\begin{eqnarray}
\int_{z_1>-1}((1+z_1)^\alpha-1)K(z)dz&=&\int_{\{z;z_1>-1\}\cap B_{\frac{1}{2}}}+\int_{\{z;z_1>-1\}\cap B_{\frac{1}{2}}^c}\nonumber\\
&\leq&\alpha 2^{1-\alpha}|\int_{B_{\frac{1}{2}}}zK(z)dz|+\int_{\{z;z_1>-1\}\cap B_{\frac{1}{2}}^c}((1+z_1)^\alpha-1)K(z)dz\nonumber\\
&\leq&\alpha2^{1-\alpha}(1-\sigma)\Lambda\sum_{l=0}^{+\infty}\left(\frac{1}{2^{l+2}}\right)^{1-\sigma}\nonumber\\
&&+(2-\sigma)\Lambda\sum_{l=0}^{+\infty}(2^{l-1})^{-\sigma}\left((1+2^l)^\alpha-1\right)\nonumber\\
&\leq&2\alpha\Lambda\frac{1-\sigma}{1-2^{\sigma-1}}+8\Lambda \left(\frac{2^{\alpha-\sigma}}{1-2^{\alpha-\sigma}}-\frac{2^{-\sigma}}{1-2^{-\sigma}}\right).\label{eq::5.2}
\end{eqnarray}
Thus, we have
\begin{equation*}
\lim_{\alpha\to 0^+}\sup_{K\in\mathcal{L}}\int_{z_1>-1}\left(\left(1+z_1\right)^{\alpha}-1\right)K(z)dz-\inf_{K\in\mathcal{L}}\int_{z_1\leq-1}K(z)dz\leq -2\epsilon_5.
\end{equation*}
Then there exists a sufficiently small $\alpha$ such that
\begin{equation*}
M_{\mathcal{L}}^+v_\alpha((1+r)e_1)\leq-\epsilon_5 r^{\alpha-\sigma}.
\end{equation*}

Case 2: $\sigma=1$. 

Using {\rm (H2)}, we have, for any $r>0$ and $\alpha>0$,
\begin{eqnarray*}
&&M_{\mathcal{L}}^+v_{\alpha}((1+r)e_1)\\
&=&\sup_{K\in\mathcal{L}}\int_{\mathbb R^n}\left(v_{\alpha}\left(\left(1+r\right)e_1+z\right)-v_{\alpha}((1+r)e_1)-\mathbbm{1}_{B_1}(z)\nabla v_\alpha((1+r)e_1)\cdot z\right)K(z)dz\\
&=&\sup_{K\in\mathcal{L}}\int_{\mathbb R^n}\left(\left(\left(r+z_1\right)^+\right)^{\alpha}-r^\alpha-\mathbbm{1}_{B_1}(z)\alpha r^{\alpha-1}z_1\right)K(z)dz\\
&=&r^{\alpha-1}\sup_{K\in\mathcal{L}}\int_{\mathbb R^n}\left(\left(\left(1+z_1\right)^+\right)^{\alpha}-1-\mathbbm{1}_{B_{\frac{1}{r}}}(z)\alpha z_1\right)r^{n+1}K(rz)dz\\
&=&r^{\alpha-1}\sup_{K\in\mathcal{L}}\int_{\mathbb R^n}\left(\left(\left(1+z_1\right)^+\right)^{\alpha}-1-\mathbbm{1}_{B_\frac{1}{2}}(z)\alpha z_1\right)K(z)dz\\
&\leq&r^{\alpha-1}\left(\sup_{K\in\mathcal{L}}\int_{z_1>-1}\left(\left(1+z_1\right)^{\alpha}-1-\mathbbm{1}_{B_\frac{1}{2}}(z)\alpha z_1\right)K(z)dz-\inf_{K\in\mathcal{L}}\int_{z_1\leq-1}K(z)dz\right).
\end{eqnarray*}
By {\rm (H1)}, we have, for any $K\in\mathcal{L}$,
\begin{eqnarray*}
&&\int_{z_1>-1}((1+z_1)^\alpha-1-\mathbbm{1}_{B_\frac{1}{2}}(z)\alpha z_1)K(z)dz\\
&=&\int_{\{z;z_1>-1\}\cap B_{\frac{1}{2}}}((1+z_1)^\alpha-1-\alpha z_1)K(z)dz+\int_{\{z;z_1>-1\}\cap B_{\frac{1}{2}}^c}((1+z_1)^\alpha-1)K(z)dz\\
&\leq&\alpha(1-\alpha) 2^{2-\alpha}\int_{B_{\frac{1}{2}}}|z|^2K(z)dz+\int_{\{z;z_1>-1\}\cap B_{\frac{1}{2}}^c}((1+z_1)^\alpha-1)K(z)dz\\
&\leq&\alpha(1-\alpha)2^{2-\alpha}\Lambda\sum_{l=0}^{+\infty}\left(\frac{1}{2^{l+2}}\right)^{-1}\left(\frac{1}{2^{l+1}}\right)^2+\Lambda\sum_{l=0}^{+\infty}(2^{l-1})^{-1}\left((1+2^l)^\alpha-1\right)\\
&\leq&8\alpha \Lambda+4\Lambda \left(\frac{2^{\alpha-1}}{1-2^{\alpha-1}}-\frac{2^{-1}}{1-2^{-1}}\right).
\end{eqnarray*}
Then the rest of proof is similar to Case 1.

Case 3: $1<\sigma<2$. 

For any $r>0$ and $\alpha>0$, we have
\begin{eqnarray*}
&&M_{\mathcal{L}}^+v_{\alpha}((1+r)e_1)\\
&=&\sup_{K\in\mathcal{L}}\int_{\mathbb R^n}\left(v_{\alpha}\left(\left(1+r\right)e_1+z\right)-v_{\alpha}((1+r)e_1)-\nabla v_\alpha((1+r)e_1)\cdot z\right)K(z)dz\\
&=&\sup_{K\in\mathcal{L}}\int_{\mathbb R^n}\left(\left(\left(r+z_1\right)^+\right)^{\alpha}-r^\alpha-\alpha r^{\alpha-1}z_1\right)K(z)dz\\
&=&r^{\alpha-\sigma}\sup_{K\in\mathcal{L}}\int_{\mathbb R^n}\left(\left(\left(1+z_1\right)^+\right)^{\alpha}-1-\alpha z_1\right)K(z)dz\\
&\leq&r^{\alpha-\sigma}\left(\sup_{K\in\mathcal{L}}\int_{z_1>-1}\left(\left(\left(1+z_1\right)^+\right)^{\alpha}-1-\alpha z_1\right)K(z)dz-\inf_{K\in\mathcal{L}}\int_{z_1\leq-1}\left(1+\alpha z_1\right)K(z)dz\right).
\end{eqnarray*}
Using $(\ref{eq:5.1})$ and {\rm (H2)}, we have
\begin{eqnarray*}
\inf_{K\in\mathcal{L}}\int_{z_1\leq-1}\left(1+\alpha z_1\right)K(z)dz
\geq\inf_{K\in\mathcal{L}}\int_{z_1\leq-1}K(z)dz-\alpha\sup_{K\in\mathcal{L}}|\int_{B_1^c} zK(z)dz|\geq2\epsilon_5-\frac{\alpha\Lambda(\sigma-1)}{1-2^{1-\sigma}}.
\end{eqnarray*}
By {\rm (H1)} and {\rm (H2)}, we have, for any $K\in\mathcal{L}$,
\begin{eqnarray*}
&&\int_{z_1>-1}((1+z_1)^\alpha-1-\alpha z_1)K(z)dz
=\int_{\{z;z_1>-1\}\cap B_{\frac{1}{2}}}+\int_{\{z;z_1>-1\}\cap B_{\frac{1}{2}}^c}\\
&&\qquad\qquad\qquad\qquad\qquad\leq\alpha(1-\alpha) 2^{2-\alpha}\int_{B_{\frac{1}{2}}}|z|^2K(z)dz+\alpha|\int_{\{z;z_1>-1\}\cap B_{\frac{1}{2}}^c}zK(z)dz|\\
&&\quad\qquad\qquad\qquad\qquad\qquad+\int_{\{z;z_1>-1\}\cap B_{\frac{1}{2}}^c}((1+z_1)^\alpha-1)K(z)dz\\
&&\qquad\qquad\qquad\qquad\qquad\leq\frac{16\alpha(2-\sigma)\Lambda}{1-2^{\sigma-2}}+\frac{2\alpha\Lambda(\sigma-1)}{1-2^{1-\sigma}}+16(2-\sigma)\Lambda\left(\frac{2^{\alpha-\sigma}}{1-2^{\alpha-\sigma}}-\frac{2^{-\sigma}}{1-2^{-\sigma}}\right).
\end{eqnarray*}
Then we have
\begin{eqnarray*}
&&\lim_{\alpha\to 0^+}\sup_{K\in\mathcal{L}}\int_{z_1>-1}\left(\left(\left(1+z_1\right)^+\right)^{\alpha}-1-\alpha z_1\right)K(z)dz-\inf_{K\in\mathcal{L}}\int_{z_1\leq-1}\left(1+\alpha z_1\right)K(z)dz\\
&\leq&\lim_{\alpha\to 0^+}\frac{16\alpha(2-\sigma)\Lambda}{1-2^{\sigma-2}}+\frac{2\alpha\Lambda(\sigma-1)}{1-2^{1-\sigma}}+16(2-\sigma)\Lambda\left(\frac{2^{\alpha-\sigma}}{1-2^{\alpha-\sigma}}-\frac{2^{-\sigma}}{1-2^{-\sigma}}\right)-2\epsilon_5+\frac{\alpha\Lambda(\sigma-1)}{1-2^{1-\sigma}}\\
&=&-2\epsilon_5.
\end{eqnarray*}
Similar to Case 1, there exists a sufficiently small $\alpha$ such that
\begin{equation*}
M_{\mathcal{L}}^+ v_\alpha((1+r)e_1)\leq-\epsilon_5r^{\alpha-\sigma}.
\end{equation*}
\end{proof}

\begin{lemma}\label{lem:bar2}
Assume that $0<\sigma<2$, $C_0\geq 0$ and further assume $C_0=0$ if $\sigma<1$. Then there are $\alpha>0$ and $0<r_0<1$ sufficiently small so that the function $u_\alpha(x):=((|x|-1)^+)^\alpha$
satisfies $M_{\mathcal{L}}^+u_\alpha+C_0|\nabla u_{\alpha}|\leq -1$ in $\bar B_{1+r_0}\setminus \bar B_1$.
\end{lemma}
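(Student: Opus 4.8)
The plan is to reduce to the distinguished point $x_0=(1+r)e_1$, $0<r\le r_0$, by the rotational invariance of the class $\mathcal L$ (if $K\in\mathcal L$ and $R$ is a rotation, then $z\mapsto K(Rz)$ again satisfies (H0)--(H3), so $M_{\mathcal L}^+$ commutes with rotations, and $u_\alpha$ is radial), and then to compare $u_\alpha$ with the flat barrier $v_\alpha(y)=((y_1-1)^+)^\alpha$ already treated in Lemma~\ref{lem:bar1}. On the shell $\bar B_{1+r_0}\setminus\bar B_1$ the function $u_\alpha$ is smooth, $\nabla u_\alpha(x_0)=\alpha r^{\alpha-1}e_1=\nabla v_\alpha(x_0)$, and $u_\alpha\ge v_\alpha$ everywhere (since $|y|\ge y_1$ and $t\mapsto(t^+)^\alpha$ is nondecreasing) with equality at $x_0$. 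Writing $u_\alpha=v_\alpha+(u_\alpha-v_\alpha)$ and using the subadditivity $M_{\mathcal L}^+(f+g)\le M_{\mathcal L}^+f+M_{\mathcal L}^+g$ together with Lemma~\ref{lem:bar1}, it then suffices to show, for one suitably small fixed $\alpha$, that the correction term $M_{\mathcal L}^+(u_\alpha-v_\alpha)(x_0)\le\tfrac{\epsilon_5}{2}r^{\alpha-\sigma}$, for then $M_{\mathcal L}^+u_\alpha(x_0)\le-\tfrac{\epsilon_5}{2}r^{\alpha-\sigma}$.

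The heart of the argument is this correction estimate. Since $(u_\alpha-v_\alpha)(x_0)=0$ and $\nabla(u_\alpha-v_\alpha)(x_0)=0$, for every kernel $K$ one has $\int\delta_z(u_\alpha-v_\alpha)(x_0)K(z)\,dz=\int(u_\alpha-v_\alpha)(x_0+z)K(z)\,dz$, regardless of which of the three cases in the definition of $\delta_z$ applies. I will establish pointwise bounds, writing $y=x_0+z$, $y'=(z_2,\dots,z_n)$, so that $|y|-y_1=|z'|^2/(|y|+y_1)$: first, by subadditivity of $s\mapsto s^\alpha$, the uniform bound $(u_\alpha-v_\alpha)(y)\le(|y|-y_1)^\alpha$; second, for $|z|<r/2$ (so $|y|-1,\ y_1-1\ge r/2$ and $|y|+y_1\ge 2$), a mean value estimate on $t\mapsto t^\alpha$ gives $(u_\alpha-v_\alpha)(y)\le\alpha\,2^{-\alpha}r^{\alpha-1}|z|^2$; third, for $r/2\le|z|\le 2$, using $|y|+y_1\ge 2$ when $y_1>1$ and $(|y|-1)^+\le|z|^2/2$ when $y_1\le 1$ (valid since $|y|^2-1\le|z'|^2$ there for $|z|\le 2$), the bound $(u_\alpha-v_\alpha)(y)\le|z|^{2\alpha}$; and finally, for $|z|>2$, the crude bound $(u_\alpha-v_\alpha)(y)\le(2|z|)^\alpha$. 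Integrating these against $K$ by splitting $\mathbb R^n$ into dyadic annuli and applying (H1) — together with $\int_{B_\rho}|z|^2K(z)\,dz\le C\Lambda\rho^{2-\sigma}$, which follows from (H1) by dyadic summation, for the region $|z|<r/2$ — yields, provided $2\alpha<\sigma$, $M_{\mathcal L}^+(u_\alpha-v_\alpha)(x_0)\le C r^{\alpha+1-\sigma}+C r^{2\alpha-\sigma}+C$, i.e.\ $r^{\alpha-\sigma}$ times $C(r+r^\alpha+r^{\sigma-\alpha})$, with $C$ independent of $r$.

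To finish, I would first fix $\alpha>0$ small enough that Lemma~\ref{lem:bar1} applies, that $2\alpha<\sigma$, and that $C_0\alpha\le\epsilon_5/4$; then, since $r+r^\alpha+r^{\sigma-\alpha}\to 0$ as $r\to 0^+$, choose $r_0\in(0,1)$ so small that $C(r+r^\alpha+r^{\sigma-\alpha})\le\epsilon_5/2$ for all $0<r\le r_0$, which gives $M_{\mathcal L}^+u_\alpha(x_0)\le-\tfrac{\epsilon_5}{2}r^{\alpha-\sigma}$. As $C_0=0$ when $\sigma<1$ and $r^{\alpha-1}\le r^{\alpha-\sigma}$ when $\sigma\ge1$ and $r<1$, this yields $M_{\mathcal L}^+u_\alpha(x_0)+C_0|\nabla u_\alpha(x_0)|\le-\tfrac{\epsilon_5}{4}r^{\alpha-\sigma}$; shrinking $r_0$ once more so that $\tfrac{\epsilon_5}{4}r_0^{\alpha-\sigma}\ge 1$ (possible because $\alpha-\sigma<0$) makes this $\le-1$, and rotational invariance transfers the inequality to every $x$ with $1<|x|\le 1+r_0$.

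The step I expect to be the main obstacle is the correction estimate near $z=0$: the naive bound $(u_\alpha-v_\alpha)(y)\le(|y|-y_1)^\alpha\lesssim|z|^{2\alpha}$ fails to be integrable against $K$ at the origin when $2\alpha<\sigma$, so one genuinely needs to exploit that $u_\alpha$ and $v_\alpha$ are tangent to second order in the radial direction at $x_0$ — the hyperplane $\{y_1=1\}$ being tangent to the unit sphere at $x_0/|x_0|$ — which replaces $|z|^{2\alpha}$ by the $\sigma$-integrable $r^{\alpha-1}|z|^2$ on $\{|z|<r/2\}$. Keeping track of the exact power of $r$ that this produces, and of the order of quantifiers (fix $\alpha$ first, then $r_0$ small), is the delicate bookkeeping.
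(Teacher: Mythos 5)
Your proposal is correct and follows essentially the same route as the paper: reduce to the point $(1+r)e_1$ by rotational invariance, write $u_\alpha=v_\alpha+(u_\alpha-v_\alpha)$ and use subadditivity of $M_{\mathcal L}^+$ together with Lemma \ref{lem:bar1}, and control the correction via the pointwise bounds $r^{\alpha-1}|z|^2$ on $B_{r/2}$ and $|z|^{2\alpha}$ (resp. $|z|^\alpha$) farther out, integrated dyadically with (H1). Your bookkeeping of the powers of $r$, the treatment of the $C_0|\nabla u_\alpha|$ term according to whether $\sigma<1$, $=1$, or $>1$, and the final choice of $r_0$ with $\frac{\epsilon_5}{4}r_0^{\alpha-\sigma}\geq 1$ all match the paper's argument.
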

\begin{proof}
We notice that $u_\alpha$ and $|\nabla|$ are rotation invariant. By Lemma 2.2 in \cite{SS}, $M_{\mathcal{L}}^+$ is also rotation invariant. Then we only need to prove that
$M_{\mathcal{L}}^+u_\alpha((1+r)e_1)+C_0|\nabla u_{\alpha}((1+r)e_1)|\leq -1$ for any $r\in (0,r_0]$ where $r_0$ and $\alpha$ are sufficiently small positive constants.
Note that, $\forall r>0$, $u_\alpha((1+r)e_1)=v_\alpha((1+r)e_1)$, $\nabla u_\alpha((1+r)e_1)=\nabla v_\alpha((1+r)e_1)$ and that 
\begin{equation*}
|(|(1+r)e_1+z|-1)^+-(r+z_1)^+|\leq C|z'|^2,\quad\text{for any $z\in B_1$},
\end{equation*}
where $z=(z_1,z')$. Therefore, we have
\begin{equation*}
0\leq (u_\alpha-v_\alpha)((1+r)e_1+z)\leq\left\{\begin{array}{ll} Cr^{\alpha-1}|z'|^2,
\quad z\in B_\frac{r}{2},\\
C|z'|^{2\alpha},\qquad\,\,\, z\in B_1\setminus B_{\frac{r}{2}},\\
C|z|^{\alpha}, \qquad\quad\, z\in \mathbb R^n\setminus B_1.
\end{array}
  \right.
\end{equation*}
Using {\rm (H1)}, we have, for any $0<\sigma<2$ and $L\in \mathcal{L}$,
\begin{eqnarray*}
0&\leq&L(u_\alpha-v_\alpha)((1+r)e_1)\\
&=&\int_{\mathbb R^n}(u_\alpha-v_\alpha)((1+r)e_1+z)K(z)dz\\
&\leq& C\left(\int_{B_{\frac{r}{2}}}r^{\alpha-1}|z'|^2 K(z)dz
+\int_{B_1\setminus B_{\frac{r}{2}}}|z'|^{2\alpha}K(z)dz
+\int_{\mathbb R^n\setminus B_1}|z|^{\alpha}K(z)dz\right)\\
&\leq& C\left(\int_{B_{\frac{r}{2}}}r^{\alpha-1}|z|^2 K(z)dz
+\int_{B_{\frac{r}{2}}^c}|z|^{2\alpha}K(z)dz\right)\\
&\leq& C\Lambda( r^{\alpha-\sigma+1}+r^{2\alpha-\sigma}).
\end{eqnarray*}
Thus, we have $M_{\mathcal{L}}^+(u_\alpha-v_\alpha)((1+r)e_1)\leq C\Lambda(r^{\alpha-\sigma+1}+r^{2\alpha-\sigma})$.
Therefore, by Lemma $\ref{lem:bar1}$, there exists a sufficiently small $\alpha>0$ such that
\begin{eqnarray*}
&&M_{\mathcal{L}}^+u_\alpha((1+r)e_1)+C_0|\nabla u_\alpha((1+r)e_1)|\\
&\leq& M_{\mathcal{L}}^+(u_\alpha-v_\alpha)((1+r)e_1)+M_{\mathcal{L}}^+v_\alpha((1+r)e_1)+C_0|\nabla u_\alpha((1+r)e_1)|\\
&\leq& C\Lambda(r^{\alpha-\sigma+1}+r^{2\alpha-\sigma})-\epsilon_5 r^{\alpha-\sigma}+\alpha C_0 r^{\alpha-1}.
\end{eqnarray*}
We notice that $\alpha-\sigma+1>\alpha-\sigma$, $2\alpha-\sigma>\alpha-\sigma$ and
\begin{itemize}
\item[(i)] if $0<\sigma<1$, then $C_0=0$;
\item[(ii)] if $\sigma=1$, then $\alpha C_0\to 0$ as $\alpha\to 0$;
\item[(iii)] if $1<\sigma<2$, then $\alpha-1>\alpha-\sigma$.
\end{itemize}
Thus, there exist sufficiently small $0<r_0<1$ such that we have, for any $r\in (0,r_0]$,
\begin{equation}\label{eq:::5.3}
M_{\mathcal{L}}^+ u_\alpha((1+r)e_1)+C_0|\nabla u_\alpha((1+r)e_1)|\leq -1.
\end{equation}
\end{proof}

In the rest of this section, we assume that $\Omega$ satisfies the uniform exterior ball condition, i.e., there is a constant $r_\Omega>0$ such that, for any $x\in\partial \Omega$ and $0<r\leq r_\Omega$, there exists $y_x^r\in \Omega^c$ satisfying $\bar B_r(y_x^r)\cap\bar\Omega=\{x\}$. Without loss of generality, we can assume that $r_\Omega<1$. Since $\Omega$ is a bounded domain, there exists a sufficiently large constant $R_0>0$ such that $\Omega\subset \{y;|y_1|<R_0\}$.

\begin{remark}
At this stage, we are not sure about whether the exterior ball condition is necessary for the construction of sub/supersolution. In future work, we plan to construct sub/supersolutions under a weaker assumption on $\Omega$, such as the cone condition.
\end{remark}

\begin{lemma}\label{lem:bar4}
Assume that $0<\sigma<2$, $C_0\geq 0$ and further assume $C_0=0$ if $\sigma<1$. There exists an $\epsilon_7>0$ such that, for any $x\in \partial\Omega$ and $0<r<r_\Omega$, there is a continuous function $\varphi_{x,r}$ satisfying
\begin{equation*}
\left\{\begin{array}{ll} \varphi_{x,r}\equiv 0,
\quad in\,\,\bar B_r(y_x^r),\\
\varphi_{x,r}>0,\quad in\,\, \bar B_r^c(y_x^r),\\
\varphi_{x,r}\geq 1, \quad in\,\, B_{2r}^c(y_x^r),\\
M_{\mathcal{L}}^+\varphi_{x,r}+C_0|\nabla \varphi_{x,r}|\leq -\epsilon_7,\quad in\,\,\Omega.
\end{array}
  \right.
\end{equation*}
\end{lemma}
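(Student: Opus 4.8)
The plan is to build $\varphi_{x,r}$ from the radial barrier $u_\alpha$ of Lemma~\ref{lem:bar2} by rescaling and recentering it around the exterior ball center $y_x^r$. Specifically, I would set
\[
\varphi_{x,r}(y) := u_\alpha\!\left(e_1 + \frac{y - y_x^r}{r}\right) \cdot \eta(y),
\]
or more simply take $\varphi_{x,r}(y) := \min\{1,\, u_\alpha((y-y_x^r)/r)\}$ up to a normalization, where $u_\alpha(x)=((|x|-1)^+)^\alpha$. The point of the rescaling by $r$ is that $u_\alpha((y-y_x^r)/r)$ vanishes exactly on $\bar B_r(y_x^r)$, is positive outside it, and by homogeneity of the extremal operator under scaling (Lemma~2.2 in \cite{SS}) the estimate $M_{\mathcal{L}}^+u_\alpha + C_0|\nabla u_\alpha|\le -1$ on the annulus $\bar B_{1+r_0}\setminus\bar B_1$ transfers to $M_{\mathcal{L}}^+[u_\alpha((\cdot-y_x^r)/r)] + C_0|\nabla(\cdot)| \le -c\, r^{-\sigma}$ (or $\le -c\,r^{-\sigma}$ with the appropriate scaling of the gradient term, using $C_0=0$ when $\sigma<1$) on the corresponding scaled annulus $\bar B_{r(1+r_0)}(y_x^r)\setminus \bar B_r(y_x^r)$. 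Since $r<r_\Omega<1$ is bounded, $r^{-\sigma}\ge 1$, so the right-hand side is $\le -1$ there.

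The first three bullet-point properties are then immediate from the definition once one fixes the normalization: $\varphi_{x,r}\equiv 0$ on $\bar B_r(y_x^r)$ because $u_\alpha\equiv 0$ on $\bar B_1$; $\varphi_{x,r}>0$ outside because $u_\alpha>0$ outside $\bar B_1$; and after scaling the value of $u_\alpha$ at distance $2r$ from $y_x^r$ down to $1$ (dividing by the constant $((2^{} - 1)^+)^\alpha \cdot$ (radius factor), a fixed positive number depending only on $\alpha$), $\varphi_{x,r}\ge 1$ outside $B_{2r}(y_x^r)$. The remaining task is the fourth property: the extremal inequality must hold on all of $\Omega$, not just on the thin annulus where the Lemma~\ref{lem:bar2} computation applies. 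Away from the annulus $\Omega$ is contained in $B_{r(1+r_0)}^c(y_x^r)$ (after possibly shrinking $r_0$ and using that $\Omega$ lies in a bounded slab $\{|y_1|<R_0\}$), where $\varphi_{x,r}$ is the constant $1$, or rather where $\varphi_{x,r} = \min\{1,\dots\}$ has been truncated to $1$. Here I would argue that at a point $y$ where $\varphi_{x,r}(y)=1$ (the truncation is active), one can still test with a smooth function from above: $M_{\mathcal{L}}^+\varphi_{x,r}(y)\le 0$ because the nonnegative function $\delta_z\varphi_{x,r}(y) = \varphi_{x,r}(y+z) - 1 \le 0$ for the $\sigma<1$ increment, and for $\sigma\ge 1$ one uses that a maximum of $\varphi_{x,r}$ is attained so the gradient vanishes and the second-order/nonlocal term is $\le 0$; combined with the truncation making $\nabla\varphi_{x,r}=0$ there, we get $M_{\mathcal{L}}^+\varphi_{x,r} + C_0|\nabla\varphi_{x,r}| \le 0$. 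To upgrade $\le 0$ to $\le -\epsilon_7$ on the flat region, I would instead not truncate at exactly $1$ but keep a slightly decreasing tail, e.g. use $\varphi_{x,r}(y) = \min\{1, u_\alpha((y-y_x^r)/r)\} + \text{(small cutoff correction)}$, or simply observe that the extremal operator of a function that is $0$ near $y_x^r$ and $\equiv 1$ far away still contributes a strictly negative nonlocal term: the integral $\int \delta_z\varphi_{x,r}(y)\,K(z)\,dz$ picks up the region where $y+z\in \bar B_r(y_x^r)$, on which $\delta_z\varphi_{x,r}(y) = 0 - 1 = -1 <0$, and by (H3) this set carries positive $K$-mass (the kernel is nondegenerate on a symmetric subset of each annulus), giving a quantitative negative bound uniform in $x\in\partial\Omega$, $r<r_\Omega$ since $\bar B_r(y_x^r)$ sits at bounded distance ($\le 2R_0$) from every point of $\Omega$.

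The main obstacle, then, is establishing property (iv) \emph{uniformly} over the two transition regimes and over all $x,r$, with a single $\epsilon_7$ independent of the boundary point and of the exterior-ball radius. This splits into: (a) the annular region $\bar B_{r(1+r_0)}(y_x^r)\setminus\bar B_r(y_x^r)$, handled by rescaling Lemma~\ref{lem:bar2} and using $r^{-\sigma}\ge 1$; (b) the far region $\Omega \cap B_{r(1+r_0)}^c(y_x^r)$, handled by the (H3)-nondegeneracy argument above, using that $\Omega$ is bounded so the "jump into the hole $\bar B_r(y_x^r)$" always has $K$-measure bounded below by a constant depending only on $R_0$, $\sigma_0$, $\lambda$, $\mu$, $n$. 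One must also double-check the behavior of the gradient term across the gluing radius (it is only Lipschitz there, so one works with viscosity test functions rather than pointwise derivatives) and confirm that the lower-order constant $C_0$, which is $0$ for $\sigma<1$, does not spoil the sign — exactly the kind of case analysis already carried out in Lemma~\ref{lem:bar2}. Once (a) and (b) are in place, $\epsilon_7$ is the minimum of the two quantitative lower bounds and the lemma follows.
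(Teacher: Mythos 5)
Your overall architecture matches the paper's: rescale $u_\alpha$ by $r$ around $y_x^r$, cap it with a $\min$, use the scaled Lemma \ref{lem:bar2} estimate together with $r^{-\sigma}\geq 1$ on the annulus, and treat the far region where the cap is active separately. The annulus part (a) is fine. But your quantitative argument for the far region (b) has a genuine gap: you claim that at a point $y$ where $\varphi_{x,r}$ equals its maximum, the nonlocal term is $\leq -\epsilon_7$ because the integral ``picks up the region where $y+z\in\bar B_r(y_x^r)$,'' and that this set carries $K$-mass bounded below uniformly. It does not. The set $\{z:\,y+z\in\bar B_r(y_x^r)\}$ is a ball of radius $r$ sitting at distance of order $1$ from the origin; even with the \emph{upper} bound {\rm (H1)} its $K$-mass is $O(r^n)$ and tends to $0$ as $r\to 0$, and {\rm (H3)} gives no lower bound at all on $K$ restricted to such a small, arbitrarily placed set (the nondegeneracy set $A_\delta$ is only a $\mu$-fraction of each annulus and need not meet that ball). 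So your $\epsilon_7$ would degenerate as $r\to 0$, whereas the lemma requires it uniform over $0<r<r_\Omega$.

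The paper's fix is exactly the ``slightly decreasing tail'' you mention in passing but do not implement: the cap is not the constant $1$ but a fixed function $\varphi$ with $\varphi\equiv 2$ on the slab $\{y_1\leq R_0\}\supset\Omega$ and $\varphi\equiv 1$ on the far half-space $\{y_1>R_0+1\}$, and one sets $\varphi_{x,r}=\min\{\varphi,\,C_3u_\alpha((\cdot-y_x^r)/r)\}$ with $C_3>2/r_0^\alpha$ (so that the switch to $\varphi$ happens inside the annulus where Lemma \ref{lem:bar2} applies). At a far point $y\in\Omega$ the function attains its global maximum $2$, and the strictly negative contribution comes from jumps landing in the \emph{fixed} half-space $\{z_1>2R_0+1\}$, whose $K$-mass is bounded below by a constant $\epsilon_6>0$ uniformly over $K\in\mathcal L$ by the same {\rm (H3)}-on-a-large-annulus argument as in $(\ref{eq:5.1})$ — uniformity in $x$ and $r$ is automatic because this set does not depend on them. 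You would need to replace your ``jump into the hole'' estimate by this (or an equivalent $r$-independent) mechanism for the proof to close.
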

\begin{proof}
We define a uniformly continuous function $\varphi$ in $\mathbb R^n$ such that $1\leq\varphi\leq 2$ and  
\begin{equation*}
\left\{\begin{array}{ll} \varphi(y)= 1,
\quad in\,\,y_1>R_0+1,\\
\varphi(y)=2,\quad in\,\,y_1\leq R_0.
\end{array}
  \right.
\end{equation*}
We pick some sufficiently large $C_3>\frac{2}{r_0^\alpha}$ and we define $\varphi_{x,r}(y)=\min\{\varphi(y), C_3u_\alpha(\frac{y-y_x^r}{r})\}$ where $\alpha$ and $r_0$ are defined in Lemma $\ref{lem:bar2}$. It is easy to verify that $\varphi_{x,r}\equiv 0$ in $\bar B_r(y_x^r)$, $\varphi_{x,r}>0$ in $\bar B_r^c(y_x^r)$, and $\varphi_{x,r}\geq 1$ in $B_{2r}^c(y_x^r)$. By Lemma $\ref{lem:bar2}$, we have $M_{\mathcal{L}}^+u_\alpha+C_0|\nabla u_\alpha|\leq -1$ in $\bar B_{1+r_0}\setminus \bar B_{1}$. It is obvious that, for any $y\in \bar B_{(1+r_0)r}(y_x^r)\setminus \bar B_{r}(y_x^r)$, we have 
\begin{equation*}
(M_{\mathcal{L}}^+u_\alpha(\frac{\cdot-y_x^r}{r}))(y)+C_0r^{1-\sigma}|(\nabla u_\alpha(\frac{\cdot-y_x^r}{r}))(y)|\leq-r^{-\sigma},\quad\text{for any $0<r<r_\Omega$.}
\end{equation*}
Since $C_0=0$ if $0<\sigma<1$, and $0<r<1$, then
\begin{equation*}
(M_{\mathcal{L}}^+u_\alpha(\frac{\cdot-y_x^r}{r}))(y)+C_0|(\nabla u_\alpha(\frac{\cdot-y_x^r}{r}))(y)|\leq-1,\quad\text{for any $0<r<r_\Omega$.}
\end{equation*}

For any $y\in\bar B_{(1+(\frac{2}{C_3})^{\frac{1}{\alpha}})r}(y_x^r)\setminus \bar B_r(y_x^r)$, we have $\varphi_{x,r}(y)=C_3u_\alpha(\frac{y-y_x^r}{r})$. Suppose that there exists a test function $\psi\in C_b^2(\mathbb R^n)$ touches $\varphi_{x,r}$ from below at $y$. Thus, $\frac{\psi}{C_3}$ touches $u_\alpha(\frac{\cdot-y_x^r}{r})$ from below at $y$. Thus, $M_{\mathcal{L}}^+\psi(y)+C_0|\nabla \psi(y)|\leq-C_3$. For any $y\in\Omega\cap \bar B_{(1+(\frac{2}{C_3})^{\frac{1}{\alpha}})r}^c(y_x^r)$, we have $\varphi_{x,r}(y)=\varphi(y)=\max_{\mathbb R^n}\varphi_{x,r}=2$. Therefore, for any $0<\sigma<2$, we have

\begin{eqnarray*}
(M_{\mathcal{L}}^+\varphi_{x,r})(y)+C_0|\nabla \varphi_{x,r}(y)|&=&\sup_{K\in\mathcal{L}}\int_{\mathbb R^n}\left(\varphi_{x,r}(y+z)-\varphi_{x,r}(y)\right)K(z)dz\nonumber\\
&=&\sup_{K\in\mathcal{L}}\int_{\mathbb R^n}(\varphi_{x,r}(y+z)-2)K(z)dz\nonumber\\
&\leq&-\inf_{K\in\mathcal{L}}\int_{\{z|z_1>-y_1+R_0+1\}}K(z)dz\nonumber\\
&\leq&-\inf_{K\in\mathcal{L}}\int_{\{z|z_1>2R_0+1\}}K(z)dz.
\end{eqnarray*}
By a similar estimate to $(\ref{eq:5.1})$, there exists a positive constant $\epsilon_6$ such that, for any $K\in\mathcal{L}$, we have
\begin{equation*}
\int_{\{z|z_1>2R_0+1\}}K(z)dz\geq \epsilon_6.
\end{equation*}
Then, for any $y\in\Omega\cap \bar B_{(1+(\frac{2}{C_3})^{\frac{1}{\alpha}})r}^c(y_x^r)$, we have
\begin{equation}\label{eq:5.2}
M_{\mathcal{L}}^+\varphi_{x,r}(y)+C_0|\nabla \varphi_{x,r}(y)|\leq -\epsilon_6.
\end{equation}
Based on the above estimates, if we set $\epsilon_7=\min\{C_3,\epsilon_6\}$, we have 
\begin{equation*}
M_{\mathcal{L}}^+\varphi_{x,r}+C_0|\nabla\varphi_{x,r}| \leq -\epsilon_7,\quad in\,\,\Omega.
\end{equation*}
\end{proof}

\begin{theorem}\label{thm:supsub1}
Assume that $0<\sigma<2$, $I(x,0,0)$ is bounded in $\Omega$ and $g$ is a bounded continuous function in $\mathbb R^n$. Assume that $I$ is uniformly elliptic and satisfies {\rm (A0)}, {\rm (A2)}. Then $(\ref{eq:gennon})$ admits a continuous viscosity supersolution $\bar u$ and a continuous viscosity subsolution $\underbar u$ and $\bar u=\underbar u=g$ in $\Omega^c$. 
\end{theorem}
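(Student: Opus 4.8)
The plan is to build $\bar u$ and $\underbar u$ by taking an infimum (resp. supremum) over boundary points of translated and dilated copies of the barrier $\varphi_{\xi,r}$ from Lemma~\ref{lem:bar4}, and then gluing the result to $g$ on $\Omega^c$. First a normalization: after adding the constant $c:=\|g\|_{L^\infty(\mathbb R^n)}$ to $g$ and replacing $I(x,r,\psi)$ by $I(x,r-c,\psi)$ --- which by (A3) leaves the equation unchanged up to the shift $u\mapsto u+c$, again satisfies (A0)--(A4), is uniformly elliptic with respect to the same class $\mathcal L$, and has $I(\cdot,0,0)$ still bounded on $\Omega$ by the uniform ellipticity --- I may assume $g\ge 0$ on $\mathbb R^n$. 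Let $\omega$ be a modulus of continuity for $g$, let $\epsilon_7$ be the constant of Lemma~\ref{lem:bar4}, and fix $K>0$ with $K\ge 2\|g\|_{L^\infty(\mathbb R^n)}$ and $K\epsilon_7\ge \|I(\cdot,0,0)\|_{L^\infty(\Omega)}+m(\|g\|_{L^\infty(\mathbb R^n)})$. For $\xi\in\partial\Omega$, $0<r<r_\Omega$, and $\varphi_{\xi,r}$ the barrier of Lemma~\ref{lem:bar4}, set
\[
h_{\xi,r}:=g(\xi)+\omega(3r)+K\varphi_{\xi,r},\qquad k_{\xi,r}:=g(\xi)-\omega(3r)-K\varphi_{\xi,r}.
\]

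The main step is that each $h_{\xi,r}$ is a viscosity supersolution and each $k_{\xi,r}$ a viscosity subsolution of $I=0$ in $\Omega$. If $\psi\in C_b^2(\mathbb R^n)$ and $h_{\xi,r}-\psi$ has a global minimum at $x_0\in\Omega$, then $K^{-1}(\psi-g(\xi)-\omega(3r))$ touches $\varphi_{\xi,r}$ from below at $x_0$, so by Lemma~\ref{lem:bar4} --- together with the positive homogeneity of $M_{\mathcal L}^+$, its invariance under adding constants, and linearity of $\nabla$ --- we get $M_{\mathcal L}^+\psi(x_0)+C_0|\nabla\psi(x_0)|\le-K\epsilon_7$. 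Since $h_{\xi,r}(x_0)\ge g(\xi)\ge 0$, monotonicity (A2) and the lower uniform ellipticity bound (applied with equal first arguments, so the modulus term vanishes, and with comparison function $0$) give
\[
I(x_0,h_{\xi,r}(x_0),\psi)\ge I(x_0,0,\psi)\ge I(x_0,0,0)-M_{\mathcal L}^+\psi(x_0)-C_0|\nabla\psi(x_0)|\ge -\|I(\cdot,0,0)\|_{L^\infty(\Omega)}+K\epsilon_7\ge 0.
\]
Symmetrically, if $k_{\xi,r}-\psi$ has a global maximum at $x_0\in\Omega$, then $K^{-1}(g(\xi)-\omega(3r)-\psi)$ again touches $\varphi_{\xi,r}$ from below at $x_0$, and the upper uniform ellipticity bound together with (A2) --- now using $k_{\xi,r}(x_0)\le g(\xi)\le\|g\|_{L^\infty(\mathbb R^n)}$, so that $I(x_0,k_{\xi,r}(x_0),0)\le\|I(\cdot,0,0)\|_{L^\infty(\Omega)}+m(\|g\|_{L^\infty(\mathbb R^n)})$ --- yield $I(x_0,k_{\xi,r}(x_0),\psi)\le 0$.

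Now put $\bar v:=\inf_{\xi\in\partial\Omega,\,0<r<r_\Omega}h_{\xi,r}$ and $\underbar v:=\sup_{\xi\in\partial\Omega,\,0<r<r_\Omega}k_{\xi,r}$, both bounded. By Lemma~\ref{lem:sta}, $\underbar v^*$ is a viscosity subsolution of $I=0$ in $\Omega$; applying Lemma~\ref{lem:sta} to the operator $J(x,r,\psi):=-I(x,-r,-\psi)$ (which also satisfies (A0)--(A4)) and the family $\{-h_{\xi,r}\}$ shows that $\bar v_*$ is a viscosity supersolution of $I=0$ in $\Omega$. Using $\varphi_{\xi,r}\ge 1$ off $B_{2r}(y_\xi^r)$ and the modulus of $g$ on $B_{2r}(y_\xi^r)$, one checks $k_{\xi,r}\le g\le h_{\xi,r}$ on $\Omega^c$, hence $\underbar v\le g\le\bar v$ on $\Omega^c$ and $\underbar v=\bar v=g$ on $\partial\Omega$. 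Both $\bar v$ and $\underbar v$ are continuous: on any compact $\mathcal K\subset\Omega$ the families $\{h_{\xi,r}\}$, $\{k_{\xi,r}\}$ are equi-uniformly-continuous (the steep part of $\varphi_{\xi,r}$ lies on $\partial B_r(y_\xi^r)\subset\Omega^c$, so on $\mathcal K$ one has $\varphi_{\xi,r}=\varphi$ once $r$ is small and equi-Lipschitz bounds for $\varphi_{\xi,r}$ otherwise), whence their infimum and supremum are continuous in $\Omega$; and continuity at $\xi_0\in\partial\Omega$ follows by squeezing, since $\bar v(\xi_0)=g(\xi_0)$, $\limsup_{x\to\xi_0}\bar v(x)\le\limsup_{x\to\xi_0}h_{\xi_0,r}(x)=g(\xi_0)+\omega(3r)$ for every $r$ (let $r\downarrow 0$), and $h_{\xi,r}(x)\ge g(\xi_0)-\omega(C|x-\xi_0|)$ for all $\xi,r$ (split according to whether $\varphi_{\xi,r}(x)\ge\tfrac12$; if not, use $|x-\xi|\le C'r$ for a constant $C'<3$ together with the term $\omega(3r)$), and likewise for $\underbar v$. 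In particular $\bar v_*=\bar v$ and $\underbar v^*=\underbar v$ are, respectively, a continuous supersolution and a continuous subsolution of $I=0$ in $\Omega$. Finally set $\bar u:=\bar v$ and $\underbar u:=\underbar v$ on $\bar\Omega$ and $\bar u:=\underbar u:=g$ on $\Omega^c$; these are continuous on $\mathbb R^n$, equal to $g$ on $\Omega^c$, and still respectively a supersolution and a subsolution of $I=0$ in $\Omega$, because $\bar u\le\bar v$ (resp. $\underbar u\ge\underbar v$) with equality on $\bar\Omega$, so a global minimum of $\bar u-\psi$ (resp. maximum of $\underbar u-\psi$) at a point of $\Omega$ is also one of $\bar v-\psi$ (resp. $\underbar v-\psi$).

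The crux is the second step: converting the purely extremal information $M_{\mathcal L}^+\varphi_{\xi,r}+C_0|\nabla\varphi_{\xi,r}|\le-\epsilon_7$ into a genuine super/subsolution property for the abstract operator $I$ without generating an $m(\|\bar u\|_{L^\infty})$ term, which would make the choice of $K$ circular. The normalization $g\ge0$ is what lets (A2) collapse the inner argument to $0$ (resp. to $\|g\|_{L^\infty(\mathbb R^n)}$) and thereby remove the modulus term; the rest --- the equicontinuity of the barrier families and the boundary squeeze, where the uniform exterior ball condition and the shape of $u_\alpha$ are used --- is routine.
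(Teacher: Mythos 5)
Your proof is correct and follows essentially the same route as the paper's: it builds the supersolution as $\inf_{\xi,r}\{g(\xi)+\omega(3r)+K\varphi_{\xi,r}\}$ with $K\epsilon_7$ dominating the zeroth-order data, checks interior equicontinuity and the boundary squeeze via the modulus of $g$, and glues to $g$ on $\Omega^c$. The only (harmless) reorganizations are that you normalize $g\ge 0$ and verify the supersolution property of each barrier $h_{\xi,r}$ before taking the infimum, whereas the paper keeps $g$ as is, compares with $I(\cdot,-\|g\|_{L^\infty},0)$, and passes from the extremal inequality for the infimum to $I\ge 0$ only at the end.
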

\begin{proof}
We only prove $(\ref{eq:gennon})$ admits a viscosity supersolution $\bar u$ and $\bar u=g$ in $\Omega^c$. For a viscosity subsolution, the construction is similar. Since $I$ is uniformly elliptic, we have, for any $x\in\Omega$,
 \begin{equation*}
-m(\|g\|_{L^{\infty}(\mathbb R^n)})\leq I(x,-\|g\|_{L^{\infty}(\mathbb R^n)},0)-I(x,0,0)\leq m(\|g\|_{L^{\infty}(\mathbb R^n)}).
 \end{equation*}
Thus, we have $\|I(\cdot,-\|g\|_{L^{\infty}(\mathbb R^n)},0)\|_{L^{\infty}(\Omega)}<+\infty$. 
Since $g$ is a continuous function, let $\rho_R$ be a modulus of continuity of $g$ in $B_R$. Let $R_1$ be a sufficiently large constant such that $\Omega\subset B_{R_1-1}$. For any $x\in\partial\Omega$, we let $u_{x,r}=\rho_{R_1}(3r)+g(x)+\max\{2\|g\|_{L^{\infty}(\mathbb R^n)},\frac{\|I(\cdot,-\|g\|_{L^{\infty}(\mathbb R^n)},0)\|_{L^\infty(\Omega)}}{\epsilon_7}\}\varphi_{x,r}$ where $\varphi_{x,r}$ and $\epsilon_7$ are given in Lemma $\ref{lem:bar4}$. It is obvious that $u_{x,r}(x)=\rho_{R_1}(3r)+g(x)$, $u_{x,r}\geq g$ in $\mathbb R^n$ and 
\begin{equation*}
M_{\mathcal{L}}^+u_{x,r}+C_0|\nabla u_{x,r}|\leq -\|I(\cdot,-\|g\|_{L^{\infty}(\mathbb R^n)},0)\|_{L^{\infty}(\Omega)}\quad\text{in $\Omega$.}
\end{equation*}

Now we define $\tilde u=\inf_{x\in\partial\Omega,0<r<r_\Omega}\{u_{x,r}\}$. Therefore, $\tilde u=g$ in $\partial\Omega$ and $\tilde u\geq g$ in $\mathbb R^n$. For any $x\in\partial\Omega$ and $y\in\mathbb R^n$, we have $g(y)-g(x)\leq \tilde u(y)-\tilde u(x)= \tilde u(y)-g(x)\leq \rho_{R_1}(3r)+\max\{2\|g\|_{L^{\infty}(\mathbb R^n)},\frac{\|I(\cdot,-\|g\|_{L^{\infty}(\mathbb R^n)},0)\|_{L^\infty(\Omega)}}{\epsilon_7}\}\varphi_{x,r}(y)$ for any $0<r<r_\Omega$. Therefore, $\tilde u$ is continuous on $\partial \Omega$. For any $y\in\Omega$, we define $d_y={\rm dist}(y,\partial\Omega)>0$. If $r<\frac{d_y}{2}$, then we have, for any $z\in B_{\frac{d_y}{2}}(y)$,
\begin{equation*}
u_{x,r}(z)=\rho_{R_1}(3r)+g(x)+2\max\{2\|g\|_{L^{\infty}(\mathbb R^n)},\frac{\|I(\cdot,-\|g\|_{L^{\infty}(\mathbb R^n)},0)\|_{L^\infty(\Omega)}}{\epsilon_7}\},\quad \text{for any $x\in\partial\Omega$.}
\end{equation*}
Thus, we have, for any $z\in B_{\frac{d_y}{2}}(y)$,
\begin{equation*}
\inf_{x\in\partial\Omega,\frac{d_y}{2}<r<r_\Omega}\{u_{x,r}(z)-u_{x,r}(y),0\}\leq \tilde u(z)-\tilde u(y)\leq \sup_{x\in\partial\Omega,\frac{d_y}{2}<r<r_\Omega}\{u_{x,r}(z)-u_{x,r}(y),0\}.
\end{equation*}
Since $\{u_{x,r}\}_{x\in\partial\Omega,\frac{d_y}{2}<r<r_{\Omega}}$ has a uniform modulus of continuity, $\tilde u$ is continuous in $\Omega$. Therefore, $\tilde u$ is a bounded continuous function in $\bar \Omega$. By Lemma $\ref{lem:sta}$, we have $M_{\mathcal{L}}^+\tilde u+C_0|\nabla \tilde u|\leq -\|I(\cdot,-\|g\|_{L^{\infty}(\mathbb R^n)},0)\|_{L^{\infty}(\Omega)}$ in $\Omega$. 

Now we define 
\begin{equation*}
\bar u:=\left\{\begin{array}{ll} \tilde u,
\quad \text{in $\Omega$},\\
g,\quad \text{in $\Omega^c$}.
\end{array}
  \right.
\end{equation*}
By the properties of $\tilde u$, we have $\bar u$ is a bounded continuous function in $\mathbb R^n$, $\bar u=g$ in $\Omega^c$ and $M_{\mathcal{L}}^+\bar u+C_0|\nabla \bar u|\leq -\|I(\cdot,-\|g\|_{L^{\infty}(\mathbb R^n)},0)\|_{L^{\infty}(\Omega)}$ in $\Omega$. Using {\rm (A2)} and uniform ellipticity, we have, for any $x\in\Omega$, 
\begin{eqnarray*}
I(x,-\|g\|_{L^{\infty}(\mathbb R^n)},0)-I(x,\bar u(x),\bar u(\cdot))&\leq&I(x,\bar u(x),0)-I(x,\bar u(x),\bar u(\cdot))\\
&\leq&M_{\mathcal{L}}^+\bar u(x)+C_0|\nabla \bar u(x)|\\
&\leq& -\|I(\cdot,-\|g\|_{L^{\infty}(\mathbb R^n)},0)\|_{L^\infty(\Omega)}.
\end{eqnarray*}
Thus, $I(x,\bar u(x),\bar u(\cdot))\geq 0$ in $\Omega$. 
\end{proof}
Now we have enough ingredients to conclude
\begin{theorem}\label{mainthm1}
Let $\Omega$ be a bounded domain satisfying the uniform exterior ball condition. Assume that $0<\sigma<2$, $I(x,0,0)$ is bounded in $\Omega$ and $g$ is a bounded continuous function. Assume that $I$ is uniformly elliptic and satisfies {\rm (A0)}, {\rm (A2)}. Then $(\ref{eq:gennon})$ admits a viscosity solution $u$. 
\end{theorem}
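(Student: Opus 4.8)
The plan is to assemble the ingredients of Sections 3--5 in the natural order. Since $I$ is uniformly elliptic and satisfies {\rm (A0)}, {\rm (A2)}, Lemma \ref{lem:equiv} gives that $I$ satisfies the full set {\rm (A0)}--{\rm (A4)}, so every result of Sections 3 and 4 is available for $I$. Because $\Omega$ satisfies the uniform exterior ball condition, $I(x,0,0)$ is bounded in $\Omega$, and $g$ is bounded and continuous, Theorem \ref{thm:supsub1} produces a bounded continuous viscosity supersolution $\bar u$ and a bounded continuous viscosity subsolution $\underbar u$ of $(\ref{eq:gennon})$ with $\bar u=\underbar u=g$ in $\Omega^c$. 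Inspecting that construction, $\bar u\geq g$ and $\underbar u\leq g$ in $\mathbb R^n$, hence $\underbar u\leq\bar u$ in $\mathbb R^n$; thus all hypotheses of Theorem \ref{thm:per} are met.

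Applying Theorem \ref{thm:per} to this pair of barriers, the function $w(x)=\sup_{u\in\mathcal{F}}u(x)$, with $\mathcal{F}$ the Perron family squeezed between $\underbar u$ and $\bar u$, is a discontinuous viscosity solution of $(\ref{eq:gennon})$. By the remark following Definition \ref{de:disvis}, it suffices to show that $w$ is continuous on all of $\mathbb R^n$, for then $w$ is a viscosity solution of $(\ref{eq:gennon})$ and one takes $u=w$. For continuity inside $\Omega$ I would invoke Theorem \ref{cor:hol1}, whose hypotheses are exactly those assumed here: it yields $w\in C^\alpha(\Omega)$, so in particular $w_*=w^*=w$ in $\Omega$. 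On $\Omega^c$ one simply has $w=g$, which is continuous.

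It then remains to establish continuity across $\partial\Omega$. For $x_0\in\partial\Omega$, I would use the squeeze $\underbar u\leq w\leq\bar u$ in $\mathbb R^n$ together with $\underbar u(x_0)=\bar u(x_0)=g(x_0)$ and the continuity of $\underbar u,\bar u$ in $\mathbb R^n$: this forces $w_*(x_0)=w^*(x_0)=g(x_0)$ and, more precisely, $w(x)\to g(x_0)$ as $x\to x_0$ from inside and from outside $\Omega$. Combining this boundary squeeze with the interior H\"older continuity and the identity $w=g$ on $\Omega^c$ shows that $w$ is continuous at every point of $\mathbb R^n$, and hence $w$ is a viscosity solution of $(\ref{eq:gennon})$, completing the proof.

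I expect the passage to continuity up to $\partial\Omega$ to be the point needing the most care: Theorem \ref{cor:hol1} is an interior estimate that degenerates near $\partial\Omega$, so boundary continuity cannot be read off from the H\"older bound and must instead be extracted from the barrier structure of $\underbar u$ and $\bar u$ --- which is precisely why Theorem \ref{thm:supsub1} was arranged so that $\underbar u$ and $\bar u$ equal $g$ on $\Omega^c$ and are continuous across $\partial\Omega$. Everything else is bookkeeping: verifying that the structural and boundedness hypotheses of each cited result are implied by the hypotheses of the present theorem.
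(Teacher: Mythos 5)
Your proposal is correct and follows exactly the paper's route: the paper's own proof is the one-line assembly of Theorem \ref{thm:per}, Theorem \ref{cor:hol1} and Theorem \ref{thm:supsub1}, which is precisely what you carry out. Your extra care about continuity up to $\partial\Omega$ via the barrier squeeze $\underbar u\leq w\leq\bar u$ with $\underbar u=\bar u=g$ on $\Omega^c$ is a valid and welcome elaboration of a step the paper leaves implicit.
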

\begin{proof}
The result follows from Theorem \ref{thm:per}, Theorem $\ref{cor:hol1}$ and Theorem $\ref{thm:supsub1}$.
\end{proof}
\begin{corollary}\label{th5.4}
Let $\Omega$ be a bounded domain satisfying the uniform exterior ball condition. Assume that $0<\sigma<2$, $b_{ab}\equiv 0$ in $\Omega$ if $\sigma<1$ and $c_{ab}\geq 0$ in $\Omega$. Assume that $g$ is a bounded continuous function in $\mathbb R^n$, $\{K_{ab}(\cdot,z)\}_{a,b,z}$, $\{b_{ab}\}_{a,b}$, $\{c_{ab}\}_{a,b}$, $\{f_{ab}\}_{a,b}$ are sets of uniformly bounded and continuous functions in $\Omega$, uniformly in $a\in\mathcal{A}$, $b\in\mathcal{B}$, and $\{K_{ab}(x,\cdot):x\in\Omega,a\in\mathcal{A},b\in\mathcal{B}\}$ are kernels satisfying {\rm (H0)}-{\rm (H3)}. Then $(\ref{eq:belisa})$ admits a viscosity solution $u$.
\end{corollary}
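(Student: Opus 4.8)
The plan is to recognize $(\ref{eq:belisa})$ as the instance of $(\ref{eq:gennon})$ obtained by setting
\begin{equation*}
I(x,r,u(\cdot)):=\sup_{a\in\mathcal{A}}\inf_{b\in\mathcal{B}}\{-I_{ab}[x,u]+b_{ab}(x)\cdot\nabla u(x)+c_{ab}(x)r+f_{ab}(x)\},
\end{equation*}
and then to verify that this $I$ meets the hypotheses of Theorem \ref{mainthm1}, so that Theorem \ref{mainthm1} applies directly. Concretely, I would check: (a) $I(x,0,0)=\sup_{a}\inf_{b}f_{ab}(x)$ is bounded in $\Omega$, since the $f_{ab}$ are uniformly bounded; (b) $I$ satisfies {\rm (A0)} and {\rm (A2)}; and (c) $I$ is uniformly elliptic in the sense of Section 4. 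Granted (a)--(c), Theorem \ref{mainthm1} produces a viscosity solution $u$ of $(\ref{eq:gennon})$ for this $I$, which is precisely a viscosity solution of $(\ref{eq:belisa})$.

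For (b), {\rm (A2)} is immediate from $c_{ab}\geq 0$, while {\rm (A0)}---continuity of $(x,r)\mapsto I(x,r,\varphi(\cdot))$---follows from the uniform continuity of the coefficients together with the bound $(\ref{eq:levmea})$, which each $K_{ab}(x,\cdot)$ satisfies by {\rm (H1)}--{\rm (H2)} (Lemma 2.3 in \cite{SS}); this is the same argument as in the proof of Corollary \ref{col:biper}. For (c), since every $K_{ab}(x,\cdot)$ satisfies {\rm (H0)}--{\rm (H3)}, each linear operator $v\mapsto\int_{\mathbb R^n}\delta_z v(x)K_{ab}(x,z)\,dz$ lies in $\mathcal{L}(\sigma,\lambda,\Lambda)$. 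Using the elementary inequality $\sup_{a}\inf_{b}A_{ab}-\sup_{a}\inf_{b}B_{ab}\leq\sup_{a,b}(A_{ab}-B_{ab})$ and its lower counterpart, together with linearity of each $I_{ab}[x,\cdot]$, one bounds $I(x,r,\psi(\cdot))-I(x,s,\varphi(\cdot))$ above by $M_{\mathcal{L}}^+(\varphi-\psi)(x)+C_0|\nabla(\psi-\varphi)(x)|+m(|r-s|)$ and below by the symmetric expression with $M_{\mathcal{L}}^-$, where $C_0:=\sup_{a,b}\|b_{ab}\|_{L^\infty(\Omega)}$ (which is $0$ when $\sigma<1$, by the standing assumption $b_{ab}\equiv 0$ in that range) and $m(t):=(\sup_{a,b}\|c_{ab}\|_{L^\infty(\Omega)})\,t$. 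Lemma \ref{lem:equiv} then upgrades {\rm (A0)}, {\rm (A2)} to {\rm (A0)}--{\rm (A4)}, so $I$ is an admissible uniformly elliptic operator.

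The only step needing genuine care is the uniform-ellipticity estimate in (c): one must correctly fold the min-max structure into the extremal operators $M_{\mathcal{L}}^{\pm}$ and isolate the drift into the term $C_0|\nabla(\cdot)|$, while observing that the definition of uniform ellipticity forces $C_0=0$ for $\sigma<1$---which is exactly why the hypothesis imposes $b_{ab}\equiv 0$ in that regime. Everything else is assembly of results already established. As an alternative to invoking Theorem \ref{mainthm1} verbatim, one may run the chain directly: Theorem \ref{thm:supsub1} produces continuous sub/supersolutions with $\underbar u\leq g\leq\bar u$ and $\underbar u=\bar u=g$ in $\Omega^c$; Corollary \ref{col:biper} yields a discontinuous viscosity solution $w$ of $(\ref{eq:belisa})$; Corollary \ref{cor:hol2} gives interior $C^\alpha$ regularity of $w$; and the sandwich $\underbar u\leq w\leq\bar u$ with $\underbar u,\bar u$ continuous up to $\partial\Omega$ forces $w\in C(\mathbb R^n)$. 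Since a continuous discontinuous viscosity solution is a viscosity solution, this finishes the proof.
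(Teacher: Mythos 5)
Your proposal is correct and follows exactly the route the paper intends: the corollary is the specialization of Theorem \ref{mainthm1} to the Bellman--Isaacs operator, and your verification of (A0), (A2), boundedness of $I(x,0,0)$, and uniform ellipticity (with $C_0=\sup_{a,b}\|b_{ab}\|_{L^\infty(\Omega)}$ and $m(t)=\sup_{a,b}\|c_{ab}\|_{L^\infty(\Omega)}\,t$) is precisely what is needed; the alternative chain through Corollary \ref{col:biper}, Corollary \ref{cor:hol2} and Theorem \ref{thm:supsub1} is the same argument unpacked. The paper itself omits these details, so your write-up is, if anything, more complete than the source.
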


\subsection{Degenerate case}

In the degenerate case, it is natural to construct a sub/supersolution only for $(\ref{eq:belisa})$ when $c_{ab}\geq \gamma$ for some $\gamma>0$. We remind you that $\Omega$ is a bounded domain satisfying the uniform exterior ball condition with a uniform radius $r_{\Omega}$ and, for any $x\in\partial\Omega$ and $0<r\leq r_\Omega$, $y_x^r$ is a point satisfying $\bar B_r(y_x^r)\cap\bar\Omega=\{x\}$. From now on, we will hide the dependence on $x$ for all variables and functions to make the notation simpler. For example, we will let $y^r:=y_x^r$.  For any $x\in\partial\Omega$, $y\in\Omega$ and $0<r\leq r_\Omega$, we let 
\begin{equation*}
n:=\frac{x-y^r}{|x-y^r|},\quad n_{y}^r:=\frac{y-y^r}{|y-y^r|},\quad \text{and}\quad v_{\alpha}^r(y):=\left(\left(\frac{\left(y-y^r\right)\cdot n}{r}-1\right)^+\right)^{\alpha}
\end{equation*}
(See Figure 1).
\begin{figure}[!h]
\centering
\includegraphics[width=8.5cm]{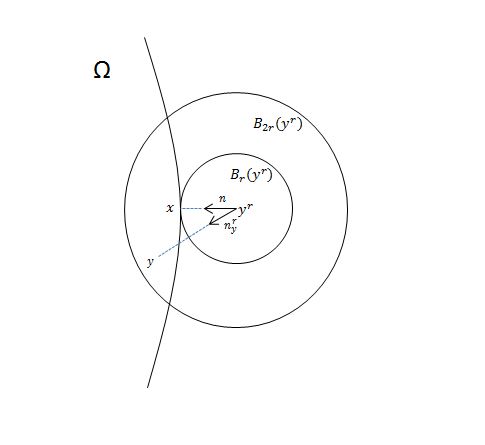}\\
\caption{\scriptsize{The exterior ball centered at $y^r.$}}
\end{figure}

Instead of letting $\{K_{ab}(x,\cdot);x\in\Omega, a\in\mathcal{A},b\in\mathcal{B}\}$ satisfy {\rm (H3)}, we let the set of kernels satisfy the following weaker assumption:\\ 
($\overline{{\rm H3}}$) There exist $C_4>0$, $0<r_1<r_{\Omega}$, $\lambda>0$ and $\mu>0$ such that, for any $x\in\partial \Omega$, $0<r<r_1$ and $y\in \Omega\cap B_{2r}(y^r)$, there is a set $A_{y}^r$ satisfying
\begin{itemize}
\item[(i)] $A_{y}^r\subset \{z;z_{n_{y}^r}<-rs_{y}^r\}\cap (B_{C_4rs_{y}^r}\setminus B_{rs_{y}^r})$ where $z_{n_{y}^r}:=z\cdot n_{y}^r$ and $s_{y}^r:=\frac{|y-y^r|}{r}-1$;
\item[(ii)] $|A_{y}^r|\geq\mu|B_{rs_{y}^r}|$;
\item[(iii)] $K(y,z)\geq (2-\sigma)\lambda(rs_{y}^r)^{-n-\sigma}$ for any $z\in A_{y}^r$.
\end{itemize}
\begin{lemma}
Suppose that $\{K_{ab}(x,\cdot);a\in\mathcal{A},b\in\mathcal{B},x\in\{y\in\Omega;{\rm dist}(y,\partial\Omega)<r_1\}\}$ satisfies $({\rm{H3}})$ for some $r_1\in(0,r_\Omega)$. Then $(\overline{{\rm H3}})$ holds for the set of kernels.
\end{lemma}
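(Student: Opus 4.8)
The plan is to derive $(\overline{{\rm H3}})$ for a given kernel $K_{ab}(y,\cdot)$ by applying {\rm (H3)} to that single kernel at one suitably large scale and then intersecting the resulting symmetric set with the half-space demanded in $(\overline{{\rm H3}})$(i).

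First I would pin down the elementary geometry at a fixed triple $x\in\partial\Omega$, $0<r<r_1$, $y\in\Omega\cap B_{2r}(y^r)$, setting $\delta:=rs_y^r=|y-y^r|-r$. Since $y\in\Omega$ and $\bar B_r(y^r)\cap\bar\Omega=\{x\}$, the point $y$ lies outside $\bar B_r(y^r)$, so $r<|y-y^r|<2r$; hence $0<s_y^r<1$ and $0<\delta<r<r_1$. The segment $[y^r,y]$ runs from $\Omega^c$ to $\Omega$, so it meets $\partial\Omega$ at some point $p$ lying between $y^r$ and $y$; as $\bar B_r(y^r)$ touches $\bar\Omega$ only at the boundary point $x$, the open ball $B_r(y^r)$ misses $\bar\Omega$, so $|p-y^r|\ge r$, and therefore ${\rm dist}(y,\partial\Omega)\le|y-p|=|y-y^r|-|p-y^r|\le|y-y^r|-r=\delta<r_1$. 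Thus $y$ lies in the collar on which the family $\{K_{ab}(y,\cdot)\}$ is assumed to satisfy {\rm (H3)} with uniform constants $\lambda,\mu>0$; securing this inequality ${\rm dist}(y,\partial\Omega)\le\delta$ is the one geometric point that must be gotten right.

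Next I would fix once and for all a constant $C_4=C_4(n,\mu)\ge 2$ large enough that, for every unit vector $e$ and every $\tau>0$, the slab $\{|z\cdot e|<\tau\}$ occupies less than the fraction $\mu/2$ of the annulus $B_{C_4\tau}\setminus B_{C_4\tau/2}$; this is possible because that fraction is bounded by a purely dimensional constant times $1/C_4$ (numerator at most $2\tau\,\omega_{n-1}(C_4\tau)^{n-1}$, denominator equal to $\omega_n C_4^n(1-2^{-n})\tau^n$) and is scale invariant. Then, with $\rho:=C_4\delta/2$ (so $\rho\ge\delta$), apply {\rm (H3)} to $K:=K_{ab}(y,\cdot)$ at scale $\rho$ to obtain a symmetric set $A_\rho\subset B_{2\rho}\setminus B_\rho$ with $|A_\rho|\ge\mu|B_{2\rho}\setminus B_\rho|$ and $K\ge(2-\sigma)\lambda\rho^{-n-\sigma}$ on $A_\rho$, and define $A_y^r:=A_\rho\cap\{z:z\cdot n_y^r<-\delta\}$.

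Finally I would verify (i)--(iii). Inclusion (i) is immediate: $\delta\le\rho$ and $2\rho=C_4\delta$ give $A_\rho\subset B_{C_4\delta}\setminus B_\delta$, so $A_y^r\subset\{z_{n_y^r}<-rs_y^r\}\cap(B_{C_4rs_y^r}\setminus B_{rs_y^r})$. For (ii), the map $z\mapsto-z$ preserves $A_\rho$ and swaps $\{z\cdot n_y^r<-\delta\}$ with $\{z\cdot n_y^r>\delta\}$, so these two portions of $A_\rho$ have equal measure; removing the middle slab $\{|z\cdot n_y^r|<\delta\}$, whose overlap with $B_{2\rho}\setminus B_\rho$ is below $\tfrac\mu2|B_{2\rho}\setminus B_\rho|$ by the choice of $C_4$, yields $|A_y^r|\ge\tfrac12\bigl(\mu-\tfrac\mu2\bigr)|B_{2\rho}\setminus B_\rho|=\tfrac\mu4\,\omega_n C_4^n(1-2^{-n})\delta^n\ge\tfrac\mu4|B_\delta|$, using $C_4^n(1-2^{-n})\ge 2^n-1\ge1$; this is (ii) with $\mu$ replaced by $\mu/4$. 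On $A_y^r\subset A_\rho$ one has $K\ge(2-\sigma)\lambda\rho^{-n-\sigma}=(2-\sigma)\lambda(C_4/2)^{-n-\sigma}\delta^{-n-\sigma}$, which is (iii) with $\lambda$ replaced by $\lambda(C_4/2)^{-n-\sigma}$ (bounded below by the $\sigma$-free constant $\lambda(C_4/2)^{-n-2}$). Hence $(\overline{{\rm H3}})$ holds with the same $r_1$, with $C_4=C_4(n,\mu)$, and with $\mu/4$ and $\lambda(C_4/2)^{-n-\sigma}$ in place of $\mu$ and $\lambda$, all constants uniform over $a,b,x,r,y$. The main obstacle is precisely the measure bound (ii): {\rm (H3)} only supplies a set filling a fixed fraction of $B_{2\rho}\setminus B_\rho$, whereas $(\overline{{\rm H3}})$(ii) demands a lower bound against the much smaller $|B_\delta|=|B_{rs_y^r}|$, which is what forces the use of the inflated scale $\rho\sim C_4\delta$ so that the required one-sided cone $\{z\cdot n_y^r<-\delta\}$ still captures nearly half of that annulus; once this is arranged, the symmetry of $A_\rho$ closes the estimate.
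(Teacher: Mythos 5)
Your proposal is correct and follows essentially the same route as the paper: apply {\rm (H3)} at the inflated scale $C_4 r s_y^r/2$, discard the slab $\{|z_{n_y^r}|\le rs_y^r\}$ (which occupies less than the fraction $\mu/2$ of the annulus once $C_4$ is large), use the symmetry $A=-A$ to keep half of what remains, and compare the annulus measure to $|B_{rs_y^r}|$, yielding the same replacement constants $\mu/4$ and $\lambda(C_4/2)^{-n-\sigma}$. Your explicit check that ${\rm dist}(y,\partial\Omega)\le rs_y^r<r_1$, so that $K_{ab}(y,\cdot)$ is indeed among the kernels covered by the hypothesis, is a detail the paper leaves implicit, but it does not change the argument.
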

\begin{proof}
For any $x\in\partial\Omega$, $0<r<r_1$ and $y\in\Omega\cap B_{2r}(y^r)$, we define
\begin{equation}\label{eeq5.5}
\mu_{C_4}:=\frac{|(B_{C_4rs_{y}^r}\setminus B_{\frac{C_4rs_{y}^r}{2}})\cap \{z;|z_{n_{y}^r}|\leq rs_{y}^r\}|}{|B_{C_4rs_{y}^r}\setminus B_{\frac{C_4rs_{y}^r}{2}}|}.
\end{equation}
We notice that the right hand side of $(\ref{eeq5.5})$ depends only on $C_4$. It is obvious that 
\begin{equation*}
\lim_{C_4\to +\infty}\mu_{C_4}=0.
\end{equation*}
By $({\rm{H3}})$, there exists a set $A$ satisfying
\begin{equation*}
A\subset B_{C_4rs_{y}^r}\setminus B_{\frac{C_4rs_{y}^r}{2}},\quad A=-A,\quad |A|\geq \mu |B_{C_4rs_{y}^r}\setminus B_{\frac{C_4rs_{y}^r}{2}}|,
\end{equation*}
and, for any $z\in A$,
\begin{equation*}
K(y,z)\geq (2-\sigma)\lambda (\frac{C_4rs_{y}^r}{2})^{-n-\sigma}=(2-\sigma)\lambda(\frac{C_4}{2})^{-n-\sigma}(rs_{y}^r)^{-n-\sigma}:=(2-\sigma)\bar\lambda(rs_{y}^r)^{-n-\sigma}.
\end{equation*}
There exists a sufficiently large constant $C_4(\geq 2)$ such that $\mu_{C_4}<\frac{\mu}{2}$. Then
\begin{equation*}
\frac{|\{z;|z_{n_{y}^r}|> rs_{y}^r\}\cap A|}{|B_{C_4 rs_{y}^r}\setminus B_{\frac{C_4rs_{y}^r}{2}}|}\geq \frac{|A|-|(B_{C_4rs_{y}^r}\setminus B_{\frac{C_4rs_{y}^r}{2}})\cap \{z;|z_{n_{y}^r}|\leq rs_{y}^r\}|}{|B_{C_4rs_{y}^r}\setminus B_{\frac{C_4rs_{y}^r}{2}}|}\geq \frac{\mu}{2}.
\end{equation*}
Let $A_{y}^r:=A\cap\{z;z_{n_{y}^r}<-rs_{y}^r\}$. By the symmetry of $A$, we have
\begin{equation*}
|A_{y}^r|\geq \frac{\mu}{4}|B_{C_4rs_{y}^r}\setminus B_{\frac{C_4rs_{y}^r}{2}}|\geq \frac{\mu}{4} |B_{rs_{y}^r}|:=\bar\mu |B_{rs_{y}^r}|.
\end{equation*}
Therefore, $(\overline{{\rm H3}})$ holds for the set of kernels with $C_4$, $r_1$, $\bar\lambda$ and $\bar\mu$.
\end{proof}
\begin{lemma}\label{lem:bar5}
Assume that $0<\sigma<2$ and $\{K_{ab}(x,\cdot);x\in\Omega, a\in\mathcal{A},b\in\mathcal{B}\}$ are kernels satisfying {\rm(H0)}-{\rm(H2)}, ($\overline{{\rm H3}}$). Then there exists a sufficiently small $\alpha>0$ such that, for any $x\in\partial \Omega$, $0<r<r_1$ and $s\in \{l\in(0,1); y^r+(1+l)r n\in\Omega\}$, we have $I_{ab}[y^r+(1+s)r n,v_{\alpha}^r]\leq -\epsilon_8r^{-\sigma}s^{\alpha-\sigma}$ where $\epsilon_8$ is some positive constant.
\end{lemma}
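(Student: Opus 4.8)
The plan is to mimic the proof of Lemma~\ref{lem:bar1}, using that $v_\alpha^r$ depends on $z$ only through the scalar $z\cdot n$, and feeding in $(\overline{{\rm H3}})$ in place of the symmetric–set estimate $\int_{z_1\le-1}K\,dz\ge 2\epsilon_5$ used there. First I would fix $x\in\partial\Omega$, $0<r<r_1$ and $s$ in the indicated set, and write $w:=y^r+(1+s)rn$; then $w\in\Omega\cap B_{2r}(y^r)$ and, directly from the definitions, $s_w^r=s$ and $n_w^r=n$. For $z\in\mathbb R^n$ one has
\begin{equation*}
v_\alpha^r(w+z)=s^\alpha\Bigl(\bigl(1+\tfrac{z\cdot n}{rs}\bigr)^+\Bigr)^\alpha,\qquad v_\alpha^r(w)=s^\alpha,\qquad \nabla v_\alpha^r(w)\cdot z=\alpha s^{\alpha-1}\tfrac{z\cdot n}{r}.
\end{equation*}
Substituting $z=rs\,\zeta$ and setting $\tilde K(\zeta):=(rs)^{n+\sigma}K_{ab}(w,rs\,\zeta)$ --- which again satisfies {\rm (H0)}--{\rm (H2)} by the scaling in Lemma~2.2 of \cite{SS}, and for which $(\overline{{\rm H3}})$ provides a set $\tilde A:=(rs)^{-1}A_w^r\subset\{\zeta\cdot n<-1\}\cap(B_{C_4}\setminus B_1)$ with $|\tilde A|\ge\mu|B_1|$ and $\tilde K\ge(2-\sigma)\lambda$ on $\tilde A$ --- one computes
\begin{equation*}
I_{ab}[w,v_\alpha^r]=r^{-\sigma}s^{\alpha-\sigma}\int_{\mathbb R^n}\Bigl[\bigl((1+\zeta\cdot n)^+\bigr)^\alpha-1-\chi_\sigma(\zeta)\,\alpha\,(\zeta\cdot n)\Bigr]\tilde K(\zeta)\,d\zeta,
\end{equation*}
where $\chi_\sigma\equiv0$ if $\sigma<1$, $\chi_\sigma=\mathbbm 1_{B_{1/(rs)}}$ if $\sigma=1$, and $\chi_\sigma\equiv1$ if $\sigma>1$ (for $\alpha<\sigma$ the integral converges). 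It then suffices to bound the last integral above by a negative constant $-\epsilon_8$ depending only on $n,\sigma,\lambda,\Lambda,\mu,C_4$, for all small $\alpha$.

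The negative term comes from $\tilde A$: there $(1+\zeta\cdot n)^+=0$ and $|\zeta\cdot n|\le C_4$, so the integrand equals $-1-\chi_\sigma(\zeta)\alpha(\zeta\cdot n)\le -1+\alpha C_4\le-\tfrac12$ once $\alpha\le(2C_4)^{-1}$, whence $\int_{\tilde A}[\cdots]\tilde K\,d\zeta\le -\tfrac12(2-\sigma)\lambda\mu|B_1|$. The positive term comes from $\{\zeta\cdot n>-1\}$, where the integrand is $(1+\zeta\cdot n)^\alpha-1-\chi_\sigma(\zeta)\alpha(\zeta\cdot n)$, exactly the expression estimated in the corresponding case of Lemma~\ref{lem:bar1}: a Taylor bound together with {\rm (H1)}--{\rm (H2)} on $B_{1/2}$, and a dyadic-shell bound from {\rm (H1)} on $B_{1/2}^c$, give a contribution $\eta(\alpha)$ with $\eta(\alpha)\to0$ as $\alpha\to0^+$. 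On the rest of $\{\zeta\cdot n\le-1\}$ the integrand is $-1-\chi_\sigma(\zeta)\alpha(\zeta\cdot n)$, whose $-1$ part is harmless and whose compensator part (present only for $\sigma\ge1$) is nonnegative; for $\sigma>1$ it is bounded by $\alpha\int_{B_1^c}|\zeta|\tilde K\,d\zeta\le C(\sigma)\Lambda\alpha$ via {\rm (H1)}, again $\to0$ with $\alpha$. Summing, the bracketed integral is $\le -\tfrac12(2-\sigma)\lambda\mu|B_1|+C\Lambda\alpha+\eta(\alpha)$, so choosing $\alpha$ small (depending only on $n,\sigma,\lambda,\Lambda,\mu,C_4$) yields $\le -\tfrac14(2-\sigma)\lambda\mu|B_1|=:-\epsilon_8$, which is the claim after multiplying back by $r^{-\sigma}s^{\alpha-\sigma}$.

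The step that requires care is $\sigma=1$, where the compensator window $\mathbbm 1_{B_{1/(rs)}}$ depends on the scale $rs$, which can be arbitrarily small: the crude bound $\alpha\int_{B_{1/(rs)}\setminus B_1}|\zeta|\tilde K\,d\zeta$ from {\rm (H1)} alone is only $O\!\bigl(\alpha\Lambda\log\tfrac{1}{rs}\bigr)$, hence not uniform. The remedy --- as already used implicitly in Lemma~\ref{lem:bar1} --- is to split $\mathbbm 1_{B_{1/(rs)}}(\zeta)\alpha(\zeta\cdot n)=\mathbbm 1_{B_{1/2}}(\zeta)\alpha(\zeta\cdot n)+\mathbbm 1_{B_{1/(rs)}\setminus B_{1/2}}(\zeta)\alpha(\zeta\cdot n)$, absorb $\alpha\int_{B_{1/(rs)}\setminus B_{1/2}}(\zeta\cdot n)\tilde K\,d\zeta$ using the first-moment cancellation in {\rm (H2)} for $\sigma=1$ (namely $\int_{B_{2\delta}\setminus B_\delta}\zeta\,\tilde K\,d\zeta=0$ for every $\delta$), so that all complete dyadic annuli in $B_{1/(rs)}\setminus B_{1/2}$ contribute zero and only a single incomplete annulus survives, of size $O(\Lambda)$; and then run the $\{\zeta\cdot n\le-1\}$ versus $\{\zeta\cdot n>-1\}$ split against the cleanly supported cutoff $\mathbbm 1_{B_{1/2}}$, which is carried in $\{\zeta\cdot n>-1/2\}$. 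This keeps every estimate uniform in $x,r,s$, which is the whole point. The remaining bookkeeping --- verifying that $\epsilon_8$ and the admissible range of $\alpha$ depend only on the structural constants $n,\sigma,\lambda,\Lambda,\mu,C_4$ --- is routine once everything has been phrased through $\tilde K$.
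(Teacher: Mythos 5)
Your proof is correct and follows essentially the same route as the paper's: the same rescaling $z=rs\,\zeta$, the same use of $(\overline{{\rm H3}})$ at $y=y^r+(1+s)rn$ (where indeed $s_y^r=s$, $n_y^r=n$) to produce the negative mass $\ge(2-\sigma)\lambda\mu|B_1|$ on $\{\zeta\cdot n\le -1\}$, and the same Lemma \ref{lem:bar1}-type bounds, via {\rm (H1)}--{\rm (H2)} for the rescaled kernel, showing the remaining contribution is $o(1)$ as $\alpha\to0^+$. The only difference is that you write out the $\sigma\ge1$ cases (which the paper dismisses as ``similar'') and explicitly justify, via the exact first-moment cancellation in {\rm (H2)} at $\sigma=1$, the replacement of the compensator window $B_{1/(rs)}$ by a fixed ball --- a step the paper performs silently in Lemma \ref{lem:bar1}, Case 2.
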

\begin{proof}
We only prove the result for the case $0<\sigma<1$. For the rest of cases, the proofs are similar to those in Lemma $\ref{lem:bar1}$. For any $x\in\partial \Omega$, $0<r<r_1$ and $s\in \{l\in(0,1); y^r+(1+l)r n\in\Omega\}$, we have
\begin{eqnarray*}
&&I_{ab}[y^r+(1+s)r n,v_{\alpha}^r]\\
&=&\int_{\mathbb R^n}\left(v_{\alpha}^r(y^r+(1+s)r n+z)-v_{\alpha}^r(y^r+(1+s)r n)\right)K_{ab}(y^r+(1+s)r n,z)dz\\
&=&\int_{\mathbb R^n}\left[\left(\left(s+\frac{\tilde z_{n}}{r}\right)^+\right)^{\alpha}-s^{\alpha}\right]K_{ab}(y^r+(1+s)r n,z)dz\\
&=&r^{-\sigma}s^{\alpha-\sigma}\int_{\mathbb R^n}\left[\left(\left(1+\tilde z_{n}\right)^+\right)^{\alpha}-1\right](rs)^{n+\sigma}K_{ab}(y^r+(1+s)r n,rsz)dz\\
&=&r^{-\sigma}s^{\alpha-\sigma}\Big\{\int_{\tilde z_{n}>-1}\Big[\left(1+\tilde z_{n}\right)^{\alpha}-1\Big](rs)^{n+\sigma}K_{ab}(y^r+(1+s)r n,rsz)dz\\
&&\qquad\qquad\,\,-\int_{\tilde z_{n}\leq-1}(rs)^{n+\sigma}K_{ab}(y^r+(1+s)r n,rsz)dz \Big\},
\end{eqnarray*}
where $\tilde z_{n}:=z\cdot n$. Using ($\overline{{\rm H3}}$), we have
\begin{eqnarray*}
&&\int_{\tilde z_{n}\leq-1}(rs)^{n+\sigma}K_{ab}(y^r+(1+s)r n,rsz)dz\\
&=&(rs)^{\sigma}\int_{\tilde z_{n}\leq -rs}K_{ab}(y^r+(1+s)r n,z)dz\\
&\geq&(rs)^{\sigma}\int_{A_{y^r+(1+s)rn}^r}K_{ab}(y^r+(1+s)r n,z)dz\\
&\geq&(2-\sigma)\lambda\mu(rs)^{-n}|B_{rs}|:=2\epsilon_8.
\end{eqnarray*}
We notice that the kernel $(rs)^{n+\sigma}K_{ab}(y^r+(1+s)rn,rs\cdot)$ still satisfies {\rm (H1)} and {\rm (H2)}. By a similar calculation to $(\ref{eq::5.2})$, we have
\begin{eqnarray*}
\int_{\tilde z_{n}>-1}\Big[\left(1+\tilde z_{n}\right)^{\alpha}-1\Big](rs)^{n+\sigma}K_{ab}(y^r+(1+s)r n,rsz)dz\leq \epsilon(\alpha),
\end{eqnarray*}
where $\epsilon(\alpha)$ is a positive constant satisfying that $\epsilon(\alpha)\to 0$ as $\alpha\to 0$. Then there exists a sufficiently small $\alpha$ such that
\begin{equation*}
I_{ab}[y^r+(1+s)r n,v_{\alpha}^r]\leq -\epsilon_8r^{-\sigma}s^{\alpha-\sigma}.
\end{equation*}
\end{proof}

\begin{lemma}\label{lem:bar6}
Assume that $0<\sigma<2$, and $b_{ab}\equiv 0$ in $\Omega$ if $\sigma<1$. Assume that $\{b_{ab}\}_{a,b}$ are sets of uniformly bounded functions in $\Omega$ and $\{K_{ab}(x,\cdot);x\in\Omega, a\in\mathcal{A},b\in\mathcal{B}\}$ are kernels satisfying {\rm(H0)}-{\rm(H2)}, ($\overline{{\rm H3}}$). Then there are $\alpha>0$ and $0<s_0<1$ sufficiently small so that, for any $x\in\partial\Omega$ and $0<r<r_1$, the function 
\begin{equation*}
u_{\alpha}^r(y):=\left(\left(\frac{|y-y^r|}{r}-1\right)^+\right)^{\alpha}
\end{equation*}
satisfies, for any $a\in\mathcal{A}$ and $b\in\mathcal{B}$,
\begin{equation*}
-I_{ab}[y,u_{\alpha}^r]+b_{ab}(y)\cdot \nabla u_{\alpha}^r(y)\geq 1\quad\text{in $\Omega\cap(\bar B_{(1+s_0)r}(y^r)\setminus \bar B_r(y^r))$}.
\end{equation*}
\end{lemma}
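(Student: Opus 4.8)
The plan is to carry the scheme of Lemma~\ref{lem:bar2} over to the degenerate setting: write $u_\alpha^r$ as a ``directional'' barrier to which (the argument of) Lemma~\ref{lem:bar5} applies, plus a nonnegative remainder whose nonlocal contribution is of strictly higher order in the natural scales, and absorb the drift into the leading negative term.

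Fix $x\in\partial\Omega$, $0<r<r_1$, $a\in\mathcal A$, $b\in\mathcal B$, and a point $y\in\Omega\cap(\bar B_{(1+s_0)r}(y^r)\setminus\bar B_r(y^r))$; put $\hat n:=n_y^r$ and $s:=s_y^r=\frac{|y-y^r|}{r}-1\in(0,s_0]$, so that $y=y^r+(1+s)r\hat n$ and, since $s_0<1$, $y\in B_{2r}(y^r)$. Define $\hat v(z):=\big((\frac{(z-y^r)\cdot\hat n}{r}-1)^+\big)^{\alpha}$ (the analogue of $v_\alpha^r$ built from the direction $\hat n$, depending on $y$) and $\eta:=u_\alpha^r-\hat v$. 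Since $(z-y^r)\cdot\hat n\le|z-y^r|$ and $t\mapsto((t/r-1)^+)^{\alpha}$ is nondecreasing, $\eta\ge0$ on $\mathbb R^n$; along the ray $y^r+\mathbb R^+\hat n$ the two functions coincide, so $\eta(y)=0$, $\eta$ attains its global minimum at $y$, whence $\nabla\eta(y)=0$ and $\delta_z\eta(y)=\eta(y+z)\ge0$ for every $z$. By linearity of $I_{ab}$ in its second argument,
\begin{equation*}
I_{ab}[y,u_\alpha^r]=I_{ab}[y,\hat v]+I_{ab}[y,\eta].
\end{equation*}
The computation in the proof of Lemma~\ref{lem:bar5} depends on the base point only through its radial direction and radial distance from $y^r$, and $(\overline{{\rm H3}})$ supplies the set $A_y^r$ exactly in the direction $\hat n=n_y^r$; running that computation with $n$ replaced by $\hat n$ therefore gives $I_{ab}[y,\hat v]\le-\epsilon_8\,r^{-\sigma}s^{\alpha-\sigma}$, with $\epsilon_8>0$ independent of $r,s,a,b,x,y$.

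It remains to bound $0\le I_{ab}[y,\eta]=\int_{\mathbb R^n}\eta(y+z)K_{ab}(y,z)\,dz$ from above, which I would do by splitting $\mathbb R^n=B_{rs/2}\cup(B_{rs/2}^c\cap B_{r/2})\cup B_{r/2}^c$. On $B_{rs/2}$ the function $\eta$ is $C^2$ near $y$ with $\eta(y)=0$, $\nabla\eta(y)=0$ and $\|D^2\eta\|_{L^\infty(B_{rs/2}(y))}\le C\alpha\,r^{-2}s^{\alpha-2}$, so $\eta(y+z)\le C\alpha\,r^{-2}s^{\alpha-2}|z|^2$. On the two outer regions the elementary inequality $(A^+)^{\alpha}-(B^+)^{\alpha}\le((A-B)^+)^{\alpha}$, applied with $A=\frac{|z+y-y^r|}{r}-1$ and $B=\frac{(z+y-y^r)\cdot\hat n}{r}-1$, gives $\eta(y+z)\le C\,(r^{-2}|z'|^2)^{\alpha}$ on $B_{rs/2}^c\cap B_{r/2}$ (with $z'$ the component of $z$ orthogonal to $\hat n$) and $\eta(y+z)\le u_\alpha^r(y+z)\le C\,(r^{-1}|z|)^{\alpha}$ on $B_{r/2}^c$. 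Feeding these into {\rm (H1)}-{\rm (H2)} exactly as in Lemma~\ref{lem:bar2}, via $\int_{B_\rho}|z|^2K\le C\Lambda\rho^{2-\sigma}$, $\int_{\{\rho_1<|z|<\rho_2\}}|z|^{2\alpha}K\le C\Lambda\rho_1^{2\alpha-\sigma}$ and $\int_{B_\rho^c}|z|^{\alpha}K\le C\Lambda\rho^{\alpha-\sigma}$ (valid once $\alpha<\sigma/2$), yields
\begin{equation*}
I_{ab}[y,\eta]\le C\Lambda\big(\alpha+s^{\alpha}+s^{\sigma-\alpha}\big)\,r^{-\sigma}s^{\alpha-\sigma}.
\end{equation*}
For the drift term, $\nabla u_\alpha^r(y)=\frac{\alpha}{r}s^{\alpha-1}\hat n$, so $|b_{ab}(y)\cdot\nabla u_\alpha^r(y)|\le\alpha\|b_{ab}\|_{L^\infty}r^{-1}s^{\alpha-1}$, which vanishes for $\sigma<1$ and, for $\sigma\ge1$, is $\le\alpha\|b_{ab}\|_{L^\infty}r^{-\sigma}s^{\alpha-\sigma}$ because $r^{\sigma-1}s^{\sigma-1}\le1$.

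Combining the three estimates and using $s\le s_0$,
\begin{equation*}
-I_{ab}[y,u_\alpha^r]+b_{ab}(y)\cdot\nabla u_\alpha^r(y)\ \ge\ \Big(\epsilon_8-C\Lambda\alpha-C\Lambda s_0^{\alpha}-C\Lambda s_0^{\sigma-\alpha}-\alpha\sup_{a,b}\|b_{ab}\|_{L^\infty}\Big)\,r^{-\sigma}s^{\alpha-\sigma}.
\end{equation*}
I would first fix a sufficiently small $\alpha>0$ (in particular $\alpha<\sigma/2$ and small enough for Lemma~\ref{lem:bar5}) so that $C\Lambda\alpha+\alpha\sup_{a,b}\|b_{ab}\|_{L^\infty}\le\epsilon_8/4$, and then a sufficiently small $s_0\in(0,1)$ so that $C\Lambda(s_0^{\alpha}+s_0^{\sigma-\alpha})\le\epsilon_8/4$ and, crucially, $\frac{\epsilon_8}{2}s_0^{\alpha-\sigma}\ge1$. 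Since $r^{-\sigma}\ge1$ and $s^{\alpha-\sigma}\ge s_0^{\alpha-\sigma}$ (because $\alpha<\sigma$ and $s\le s_0$), the right-hand side is $\ge\frac{\epsilon_8}{2}s_0^{\alpha-\sigma}\ge1$, uniformly in $a,b,x,r$ and $y$, which is the assertion. The main obstacle is this middle step: $\eta$ is only Lipschitz across $\partial B_r(y^r)$ and grows like $|y|^{\alpha}$ at infinity, so the region splitting and the accounting of the powers of $r$ and $s$ must be carried out carefully; everything else is a rescaled repetition of Lemmas~\ref{lem:bar1}, \ref{lem:bar2} and~\ref{lem:bar5}. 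For $\sigma\ge1$ one must also track the gradient correction inside $\delta_z$, but it drops out of $I_{ab}[y,\eta]$ because $\nabla\eta(y)=0$, and in $I_{ab}[y,\hat v]$ it is handled exactly as in the corresponding case of Lemma~\ref{lem:bar5}.
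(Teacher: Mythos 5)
Your proof is correct and follows essentially the same route as the paper: decompose $u_\alpha^r$ as a half-space barrier in the radial direction plus a nonnegative remainder, use $(\overline{\rm H3})$ via the Lemma \ref{lem:bar5} computation for the leading term, estimate the remainder with {\rm (H1)}--{\rm (H2)} over the three regions $B_{rs/2}$, $B_{r/2}\setminus B_{rs/2}$, $B_{r/2}^c$, and absorb the drift. The only cosmetic difference is that you build the comparison function $\hat v$ directly in the direction $n_y^r$ at the off-axis point $y$, whereas the paper rotates the kernel back to the normal ray; these are equivalent, and your slightly cruder Taylor bound $C\alpha r^{-2}s^{\alpha-2}|z|^2$ on the inner ball still closes because the resulting $C\Lambda\alpha\,r^{-\sigma}s^{\alpha-\sigma}$ term is absorbed by the choice of $\alpha$.
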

\begin{proof}
Note that, $\forall s>0$, $u_{\alpha}^r(y^r+(1+s)rn)=v_{\alpha}^r(y^r+(1+s)rn)$, $\nabla u_{\alpha}^r(y^r+(1+s)rn)=\nabla v_{\alpha}^r(y^r+(1+s)rn)$ and that
\begin{equation*}
\left|\left(\frac{|(1+s)rn+z|}{r}-1\right)^+-\left(s+\frac{\tilde z_{n}}{r}\right)^+\right|\leq C\frac{|z-\tilde z_{n}|^2}{r^2},\quad\text{for any $z\in B_r$.}
\end{equation*}
Thus, we have
\begin{equation*}
0\leq (u_{\alpha}^r-v_{\alpha}^r)(y^r+(1+s)rn+z)\leq\left\{\begin{array}{ll} Cs^{\alpha-1}\frac{|z-\tilde z_{n}|^2}{r^2},
\quad z\in B_\frac{rs}{2},\\
C\frac{|z-\tilde z_{n}|^{2\alpha}}{r^{2\alpha}},\qquad\,\,\, z\in B_r\setminus B_{\frac{rs}{2}},\\
C\frac{|z|^{\alpha}}{r^\alpha}, \qquad\qquad\,\,\,\, z\in \mathbb R^n\setminus B_r.
\end{array}
  \right.
\end{equation*}
Using {\rm (H1)}, we have, for any $0<\sigma<2$, $a\in\mathcal{A}$, $b\in\mathcal{B}$ and $s\in \{l\in(0,1); y^r+(1+l)r n\in\Omega\}$,
\begin{eqnarray*}
0&\leq& I_{ab}[y^r+(1+s)r n,u_{\alpha}^r-v_{\alpha}^r]\\
&\leq& \int_{\mathbb R^n}(u_{\alpha}^r-v_{\alpha}^r)(y^r+(1+s)r n+z)K_{ab}(y^r+(1+s)r n,z)dz\\
&\leq&C\Big(\int_{B_{\frac{rs}{2}}}s^{\alpha-1}\frac{|z-\tilde z_{n}|^2}{r^2}K_{ab}(y^r+(1+s)r n,z)dz\\
&&\qquad+\int_{B_{\frac{r}{2}}\setminus B_{\frac{rs}{2}}}\frac{|z-\tilde z_{n}|^{2\alpha}}{r^{2\alpha}}K_{ab}(y^r+(1+s)r n,z)dz\\
&&\qquad+\int_{\mathbb R^n\setminus B_{r}}\frac{|z|^{\alpha}}{r^\alpha}K_{ab}(y^r+(1+s)r n,z)dz\Big)\\
&\leq&C\Big(\int_{B_{\frac{rs}{2}}}s^{\alpha-1}\frac{|z|^2}{r^2}K_{ab}(y^r+(1+s)r n,z)dz+\int_{\mathbb R^n\setminus B_{\frac{rs}{2}}}\frac{|z|^{2\alpha}}{r^{2\alpha}}K_{ab}(y^r+(1+s)r n,z)dz\Big)\\
&\leq&C\Lambda r^{-\sigma}(s^{\alpha-\sigma+1}+s^{2\alpha-\sigma}).
\end{eqnarray*}
By Lemma $\ref{lem:bar5}$, we have
\begin{eqnarray}
-I_{ab}[y^r+(1+s)r n,u_{\alpha}^r]
&\geq&-I_{ab}[y^r+(1+s)r n,v_{\alpha}^r]-I_{ab}[y^r+(1+s)r n,u_{\alpha}^r-v_{\alpha}^r]\nonumber\\
&\geq&r^{-\sigma}[\epsilon_8s^{\alpha-\sigma}-C\Lambda(s^{\alpha-\sigma+1}+s^{2\alpha-\sigma})].\label{eq:5.5}
\end{eqnarray}
For any $y\in \Omega\cap (B_{2r}(y^r)\setminus \bar B_r(y^r))$, we have
\begin{eqnarray*}
-I_{ab}[y,u_{\alpha}^r]&=&-\int_{\mathbb R^n}\delta_z u_{\alpha}^r(y)K_{ab}(y,z)dz\\
&=&-\int_{\mathbb R^n}\delta_z u_{\alpha}^r(y^r+(1+s_{y}^r)r n_{y}^r)K_{ab}(y,z)dz\\
&=&-\int_{\mathbb R^n}\delta_z u_{\alpha}^r(y^r+(1+s_{y}^r)r n)K_{ab}\left(y,\left(\frac{z}{|z|}+n_{y}^r-n\right)|z|\right)dz.
\end{eqnarray*}
Using ($\overline{{\rm H3}}$) and a similar estimate to $(\ref{eq:5.5})$, we have
\begin{equation*}
-I_{ab}[y,u_{\alpha}^r]\geq r^{-\sigma}[\epsilon_8(s_{y}^r)^{\alpha-\sigma}-C\Lambda((s_{y}^r)^{\alpha-\sigma+1}+(s_{y}^r)^{2\alpha-\sigma})].
\end{equation*}
By a similar estimate to $(\ref{eq:::5.3})$, there exists a sufficiently small constant $0<s_0<1$ such that we have, for any $y\in \Omega\cap (\bar B_{(1+s_0)r}(y^r)\setminus \bar B_r(y^r))$,
\begin{equation*}
-I_{ab}[y,u_{\alpha}^r]+b_{ab}(y)\cdot \nabla u_{\alpha}^r(y)\geq 1.
\end{equation*}
\end{proof}

\begin{lemma}\label{lem:bar7}
Assume that $0<\sigma<2$, $b_{ab}\equiv 0$ in $\Omega$ if $\sigma<1$ and $c_{ab}\geq \gamma$ in $\Omega$ for some $\gamma>0$. Assume that $\{K_{ab}(\cdot,z)\}_{a,b,z}$, $\{b_{ab}\}_{a,b}$, $\{c_{ab}\}_{a,b}$, $\{f_{ab}\}_{a,b}$ are sets of uniformly bounded and continuous functions in $\Omega$, uniformly in $a\in\mathcal{A}$, $b\in\mathcal{B}$, and $\{K_{ab}(x,\cdot);x\in\Omega, a\in\mathcal{A},b\in\mathcal{B}\}$ are kernels satisfying {\rm(H0)}-{\rm(H2)}, ($\overline{{\rm H3}}$). Then, for any $x\in \partial\Omega$ and $0<r<r_1$, there is a continuous  viscosity supersolution $\psi_{r}$ of $(\ref{eq3.5})$ such that $\psi_{r}\equiv 0$ in $\bar B_r(y^r)$, $\psi_{r}>0$ in $\bar B_r^c(y^r)$ and 
\begin{equation}\label{eq::5.6}
\psi_{r}\equiv\frac{\sup_{a\in\mathcal{A},b\in\mathcal{B}}\|f_{ab}\|_{L^{\infty}(\Omega)}+1}{\gamma}\quad \text{in $B_{(1+s_0)r}^c(y^r)$},
\end{equation}
where $s_0$ is given by Lemma $\ref{lem:bar6}$.
\end{lemma}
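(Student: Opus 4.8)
The plan is to glue the rescaled radial barrier $u_\alpha^r$ from Lemma~\ref{lem:bar6} to a suitable positive constant. Write $N:=\sup_{a\in\mathcal{A},\,b\in\mathcal{B}}\|f_{ab}\|_{L^{\infty}(\Omega)}$ and let $\alpha>0$, $s_0\in(0,1)$ be the small parameters furnished by Lemma~\ref{lem:bar6}. I would fix a constant
\[
C_5\ \geq\ \max\Bigl\{\frac{N+1}{\gamma s_0^{\alpha}},\ N\Bigr\}
\]
and set $\psi_r(y):=\min\bigl\{C_5\,u_\alpha^r(y),\ (N+1)/\gamma\bigr\}$ for $y\in\mathbb{R}^n$. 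Since $u_\alpha^r$ is continuous, vanishes on $\bar B_r(y^r)$, is positive on $\bar B_r^c(y^r)$, and satisfies $u_\alpha^r\geq s_0^{\alpha}$ on $B_{(1+s_0)r}^c(y^r)$, the function $\psi_r$ is continuous and bounded, $0\leq\psi_r\leq(N+1)/\gamma$, vanishes on $\bar B_r(y^r)$, is strictly positive on $\bar B_r^c(y^r)$, and by the first inequality in the choice of $C_5$ equals $(N+1)/\gamma$ on $B_{(1+s_0)r}^c(y^r)$; in particular $(\ref{eq::5.6})$ holds. Hence everything reduces to showing that $\psi_r$ is a viscosity supersolution of $(\ref{eq3.5})$ in $\Omega$.

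So suppose $\psi_r-\varphi$ has a minimum over $\mathbb{R}^n$ at some $y\in\Omega$ with $\varphi\in C_b^2(\mathbb{R}^n)$. Since $\bar B_r(y^r)\cap\bar\Omega=\{x\}\subset\partial\Omega$ and $y\in\Omega$, we have $|y-y^r|>r$, so $\psi_r(y)>0$, and the inequality to be established is $\sup_a\inf_b\{-I_{ab}[y,\varphi]+b_{ab}(y)\cdot\nabla\varphi(y)+c_{ab}(y)\psi_r(y)+f_{ab}(y)\}\geq 0$. Consider first the case $\psi_r(y)=(N+1)/\gamma$: then $y$ is a global maximum point of $\psi_r$, hence of $\varphi$, so $\nabla\varphi(y)=0$, whence $\delta_z\varphi(y)=\varphi(y+z)-\varphi(y)\leq0$ for all $z$ (the first-order correction term in $\delta_z$, when present, vanishes) and $b_{ab}(y)\cdot\nabla\varphi(y)=0$. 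Using $c_{ab}(y)\geq\gamma$ and $f_{ab}(y)\geq-N$ we get that each bracket in $(\ref{eq3.5})$ at $y$ is at least $0+0+(N+1)+(-N)=1>0$, so the supersolution inequality holds with room to spare.

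In the remaining case $\psi_r(y)=C_5u_\alpha^r(y)<(N+1)/\gamma$ we have $u_\alpha^r(y)<s_0^{\alpha}$, which forces $y\in\Omega\cap\bigl(B_{(1+s_0)r}(y^r)\setminus\bar B_r(y^r)\bigr)$ (the annulus on which $u_\alpha^r$ is $C^2$ near $y$ and Lemma~\ref{lem:bar6} is available). Since $\psi_r\leq C_5u_\alpha^r$ on $\mathbb{R}^n$ with equality at $y$, the function $z\mapsto\varphi(z)-\varphi(y)+C_5u_\alpha^r(y)$ touches $C_5u_\alpha^r$ from below at $y$; therefore $\nabla\varphi(y)=C_5\nabla u_\alpha^r(y)$ and, pointwise in $z$, $\delta_z\varphi(y)\leq C_5\,\delta_zu_\alpha^r(y)$. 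Because $K_{ab}\geq0$ and $I_{ab}[y,u_\alpha^r]$ is a convergent integral (as already used in Lemma~\ref{lem:bar6}), integrating yields $I_{ab}[y,\varphi]\leq C_5I_{ab}[y,u_\alpha^r]$, so that
\begin{align*}
&-I_{ab}[y,\varphi]+b_{ab}(y)\cdot\nabla\varphi(y)+c_{ab}(y)\psi_r(y)+f_{ab}(y)\\
&\qquad\geq\ C_5\bigl(-I_{ab}[y,u_\alpha^r]+b_{ab}(y)\cdot\nabla u_\alpha^r(y)\bigr)+c_{ab}(y)\psi_r(y)+f_{ab}(y)\ \geq\ C_5-N\ \geq\ 0,
\end{align*}
where we used Lemma~\ref{lem:bar6}, $c_{ab}(y)\psi_r(y)>0$, $f_{ab}(y)\geq-N$ and $C_5\geq N$. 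Taking $\inf_b$ and then $\sup_a$ gives $I(y,\psi_r(y),\varphi(\cdot))\geq0$, which proves that $\psi_r$ is a viscosity supersolution of $(\ref{eq3.5})$ in $\Omega$ and completes the argument.

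The one step I expect to be genuinely delicate is this second case: one has to check that the contact point is really forced into the annulus $B_{(1+s_0)r}(y^r)\setminus\bar B_r(y^r)$, where $u_\alpha^r$ is smooth and Lemma~\ref{lem:bar6} applies, and one has to make sure that the globally only continuous, unbounded comparison function $C_5u_\alpha^r$ is admissible in the nonlocal inequality; this is legitimate precisely because the smallness of $\alpha$ (the same choice made in Lemma~\ref{lem:bar6}) renders all the relevant nonlocal integrals absolutely convergent.
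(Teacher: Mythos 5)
Your construction is exactly the paper's: the same barrier $\psi_r=\min\{(N+1)/\gamma,\,C_5u_\alpha^r\}$ with $C_5$ chosen large relative to $(N+1)/(\gamma s_0^\alpha)$, the same use of Lemma \ref{lem:bar6} on the annulus, and the same role of $c_{ab}\geq\gamma$ where $\psi_r$ is constant. The only difference is that you spell out the test-function verification that a minimum of the two supersolutions is again a viscosity supersolution (correctly locating the contact point in the annulus via $u_\alpha^r(y)<s_0^\alpha$), a step the paper leaves implicit; the argument is correct.
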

\begin{proof}
Without loss of generality, we assume that $0<\gamma<1$. We pick a sufficiently large $C_5>0$ such that
 \begin{equation}\label{eq:5.6}
 C_5>\frac{\sup_{a\in\mathcal{A},b\in\mathcal{B}}\|f_{ab}\|_{L^{\infty}(\Omega)}+1}{s_0^\alpha\gamma}.
 \end{equation}
We then define, for any $x\in\partial \Omega$ and $0<r<r_1$,
\begin{equation*}
\psi_{r}(y)=\min\left\{\frac{\sup_{a\in\mathcal{A},b\in\mathcal{B}}\|f_{ab}\|_{L^{\infty}(\Omega)}+1}{\gamma},C_5 u_{\alpha}^r(y)\right\}.
\end{equation*}
It is easy to verify that $\psi_{r}\equiv 0$ in $\bar B_r(y^r)$, $\psi_{r}>0$ in $\bar B_r^c(y^r)$ and $\psi_{r}$ is a continuous function in $\mathbb R^n$. Using $(\ref{eq:5.6})$, we know that 
\begin{equation*}
C_5u_{\alpha}^r\geq C_5s_0^\alpha\geq \frac{\sup_{a\in\mathcal{A},b\in\mathcal{B}}\|f_{ab}\|_{L^{\infty}(\Omega)}+1}{\gamma},\quad\text{in $B_{(1+s_0)r}^c(y^r)$.}
\end{equation*}
Therefore, $(\ref{eq::5.6})$ holds. Since $c_{ab}\geq\gamma>0$ in $\Omega$, $\frac{\sup_{a\in\mathcal{A},b\in\mathcal{B}}\|f_{ab}\|_{L^{\infty}(\Omega)}+1}{\gamma}$ is a viscosity supersolution of $(\ref{eq3.5})$ in $\Omega$. By Lemma $\ref{lem:bar6}$ and $(\ref{eq:5.6})$, we have, for any $y\in\Omega\cap(\bar B_{(1+s_0)r}(y^r)\setminus \bar B_r(y^r))$,
\begin{eqnarray}
&&\sup_{a\in\mathcal{A}}\inf_{b\in\mathcal{B}}\{-I_{ab}[y,C_5u_{\alpha}^r]+C_5b_{ab}(x)\cdot \nabla u_{\alpha}^r(y)+C_5c_{ab}(x)u_{\alpha}^r(y)+f_{ab}(y)\}\nonumber\\
&\geq&\sup_{a\in\mathcal{A},b\in\mathcal{B}}\|f_{ab}\|_{L^{\infty}(\Omega)}+1+f_{ab}(y)\geq 0.\label{eq:5.8}
\end{eqnarray}
Therefore, $\psi_{r}$ is a continuous viscosity supersolution of $(\ref{eq3.5})$ in $\Omega$. 
\end{proof}

\begin{theorem}\label{thm:supsub2}
Assume that $0<\sigma<2$, $b_{ab}\equiv 0$ in $\Omega$ if $\sigma<1$ and $c_{ab}\geq \gamma$ in $\Omega$ for some $\gamma>0$. Assume that $g$ is a bounded continuous function in $\mathbb R^n$, $\{K_{ab}(\cdot,z)\}_{a,b,z}$, $\{b_{ab}\}_{a,b}$, $\{c_{ab}\}_{a,b}$, $\{f_{ab}\}_{a,b}$ are sets of uniformly bounded and continuous functions in $\Omega$, uniformly in $a\in\mathcal{A}$, $b\in\mathcal{B}$, and $\{K_{ab}(x,\cdot);x\in\Omega, a\in\mathcal{A},b\in\mathcal{B}\}$ are kernels satisfying {\rm(H0)}-{\rm(H2)}, ($\overline{{\rm H3}}$). Then $(\ref{eq:belisa})$ admits a continuous viscosity supersolution $\bar u$ and a continuous viscosity subsolution $\underbar u$ and $\bar u=\underbar u=g$ in $\Omega^c$. 
\end{theorem}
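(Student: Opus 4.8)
The plan is to follow the construction in the proof of Theorem~\ref{thm:supsub1}, but to argue directly with the Isaacs equation $(\ref{eq3.5})$ rather than with the extremal operators, the barriers now coming from the degenerate estimate of Lemma~\ref{lem:bar6}. I describe only the supersolution $\bar u$; the subsolution $\underbar u$ is obtained by the symmetric construction --- replacing $g$ by $-g$ in the boundary bookkeeping, using Lemma~\ref{lem:bar6} in the form $I_{ab}[y,u_\alpha^r]-b_{ab}(y)\cdot\nabla u_\alpha^r(y)\le-1$, and taking a maximum (resp.\ a supremum) in place of the minimum (resp.\ the infimum) below, so that Lemma~\ref{lem:sta} applies directly.

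Fix a modulus of continuity $\rho_R$ of $g$ on a large ball $B_R\supset\bar\Omega$ and, with $\alpha$ and $s_0$ as in Lemma~\ref{lem:bar6}, choose constants $P$ and $C_5$ large (depending only on $\gamma$, $\|g\|_{L^\infty(\mathbb R^n)}$, $\sup_{a,b}\|c_{ab}\|_{L^\infty(\Omega)}$, $\sup_{a,b}\|f_{ab}\|_{L^\infty(\Omega)}$, $\alpha$, $s_0$, $r_1$) so that $\gamma P\ge\sup_{a,b}\|f_{ab}\|_{L^\infty(\Omega)}$, $C_5s_0^{\alpha}\ge P+\|g\|_{L^\infty(\mathbb R^n)}$, $C_5\ge\sup_{a,b}\|c_{ab}\|_{L^\infty(\Omega)}\|g\|_{L^\infty(\mathbb R^n)}+\sup_{a,b}\|f_{ab}\|_{L^\infty(\Omega)}+2\|g\|_{L^\infty(\mathbb R^n)}+1$, and $P\ge\|g\|_{L^\infty(\mathbb R^n)}+\rho_R(3r_1)$. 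For $x\in\partial\Omega$ and $0<r<r_1$ define the continuous function
\[
\varphi_{x,r}(y):=\min\bigl\{P,\ g(x)+\rho_R(3r)+C_5\,u_\alpha^r(y)\bigr\}.
\]
The first step is to verify that $\varphi_{x,r}$ is a continuous viscosity supersolution of $(\ref{eq3.5})$ in $\Omega$. On $\{\varphi_{x,r}=P\}$ this amounts to $\sup_a\inf_b\{c_{ab}(y)P+f_{ab}(y)\}\ge\gamma P-\sup_{a,b}\|f_{ab}\|_{L^\infty(\Omega)}\ge0$. The open set $\{\varphi_{x,r}<P\}\cap\Omega$ lies inside the annulus $\Omega\cap(B_{(1+s_0)r}(y^r)\setminus\bar B_r(y^r))$ (by the choice of $C_5$ and because $\bar B_r(y^r)\cap\Omega=\emptyset$), and there, for a test function $\psi$ touching $g(x)+\rho_R(3r)+C_5u_\alpha^r$ from below at $y$ --- so that $\psi(y)=\varphi_{x,r}(y)\ge g(x)\ge-\|g\|_{L^\infty(\mathbb R^n)}$ --- the additive constants cancel inside $I_{ab}$ and Lemma~\ref{lem:bar6} gives that the left-hand side of $(\ref{eq3.5})$ at $(y,\psi(y),\psi(\cdot))$ is at least $C_5\cdot1+\inf_{a,b}\bigl(c_{ab}(y)\psi(y)+f_{ab}(y)\bigr)\ge C_5-\sup_{a,b}\|c_{ab}\|_{L^\infty(\Omega)}\|g\|_{L^\infty(\mathbb R^n)}-\sup_{a,b}\|f_{ab}\|_{L^\infty(\Omega)}\ge1>0$; the minimum-of-supersolutions argument of Lemma~\ref{lem:bar7} then concludes. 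Moreover $\varphi_{x,r}(x)=g(x)+\rho_R(3r)$ (since $x\in\partial B_r(y^r)$ and $g(x)+\rho_R(3r)\le P$), and $\varphi_{x,r}\ge g$ on $\mathbb R^n$: on $\{|y-x|<3r\}\supset\bar B_{(1+s_0)r}(y^r)$ because $g(y)-g(x)\le\rho_R(3r)$, and on $\{|y-x|\ge 3r\}$ because there $u_\alpha^r(y)\ge1$, so $\varphi_{x,r}(y)\ge\min\{P,g(x)+C_5\}\ge g(y)$.

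Next set $\tilde u:=\inf_{x\in\partial\Omega,\,0<r<r_1}\varphi_{x,r}$. Then $\tilde u$ is bounded, $\tilde u\ge g$ on $\mathbb R^n$, and $\tilde u=g$ on $\partial\Omega$ (use $\varphi_{x,r}(x)=g(x)+\rho_R(3r)\to g(x)$). Continuity of $\tilde u$ follows exactly as in Theorem~\ref{thm:supsub1}: at a boundary point $x_0$ from $g(y)-g(x_0)\le\tilde u(y)-g(x_0)\le\rho_R(3r)+C_5u_\alpha^r(y)$ (let $y\to x_0$, then $r\to0^+$), and at an interior point $y_0$ with $d_{y_0}={\rm dist}(y_0,\partial\Omega)$ by noting that $\varphi_{x,r}\equiv P$ on $B_{d_{y_0}/2}(y_0)$ once $r$ is smaller than a fixed multiple of $d_{y_0}$, so only the subfamily $\{\varphi_{x,r}\}$ with $r$ bounded below is active there and it has a uniform H\"older modulus of continuity. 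To pass the supersolution property to $\tilde u$ I invoke the lower-envelope analogue of Lemma~\ref{lem:sta}: the infimum of a family of viscosity supersolutions of a fixed equation is a viscosity supersolution, the nonlocal term being controlled by the same truncation of the test function near the contact point used there. Since $\tilde u$ is continuous, it is therefore a viscosity supersolution of $(\ref{eq3.5})$ in $\Omega$. Finally, $\bar u:=\tilde u$ in $\Omega$ and $\bar u:=g$ in $\Omega^c$ is bounded and continuous on $\mathbb R^n$ with $\bar u=g$ in $\Omega^c$; and if $\bar u-\varphi$ attains a minimum over $\mathbb R^n$ at some $y\in\Omega$ for $\varphi\in C_b^2(\mathbb R^n)$, then $\tilde u-\varphi$ also attains a minimum there (because $\bar u=\tilde u$ in $\Omega$ and $\bar u=g\le\tilde u$ in $\Omega^c$), so $\bar u$ is a viscosity supersolution of $(\ref{eq3.5})$ in $\Omega$, hence of $(\ref{eq:belisa})$.

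The step I expect to be the main obstacle is this passage through the infimum in the presence of the nonlocal term, i.e.\ establishing the lower-envelope stability for $(\ref{eq3.5})$ in the spirit of Lemma~\ref{lem:sta}, together with the verification that the barrier family $\{\varphi_{x,r}\}$ has a uniform modulus of continuity up to $\partial\Omega$; the constant-chasing that makes $\varphi_{x,r}$ simultaneously a supersolution of $(\ref{eq3.5})$ and an upper barrier for $g$ is routine once $c_{ab}\ge\gamma>0$ is used.
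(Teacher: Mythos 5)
Your construction is correct and follows essentially the same route as the paper: the annulus estimate of Lemma \ref{lem:bar6} plus the coercivity $c_{ab}\ge\gamma$ to cap the barrier with a constant supersolution, then an infimum over boundary points and radii, continuity via the uniform modulus of the barrier family, stability of the infimum (the lower-envelope analogue of Lemma \ref{lem:sta}, which is also what the paper invokes in Theorem \ref{thm:supsub1}), and extension by $g$. The only difference is organizational: you inline Lemma \ref{lem:bar7} by defining $\varphi_{x,r}=\min\{P,\,g(x)+\rho_R(3r)+C_5u_\alpha^r\}$ directly, which if anything handles the additive constant $g(x)+\rho_R(3r)$ more explicitly than the paper's formulation $u_r=\rho_{R_1}(3r)+g(x)+\lambda\psi_r$.
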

\begin{proof}
We only prove $(\ref{eq:belisa})$ admits a viscosity supersolution $\bar u$ such that $\bar u=g$ in $\Omega^c$. Since $g$ is a continuous function, let $\rho_R$ be a modulus of continuity of $g$ in $B_R$. Let $R_1$ be a sufficiently large constant such that $\Omega\subset B_{R_1-1}$. For any $x\in\partial\Omega$, we let 
\begin{equation*}
u_{r}=\rho_{R_1}(3r)+g(x)+\left(2\|g\|_{L^{\infty}(\mathbb R^n)}\frac{\gamma}{\sup_{a\in\mathcal{A},b\in\mathcal{B}}\|f_{ab}\|_{L^{\infty}(\Omega)}+1}+1\right)\psi_{r},
\end{equation*}
where $\psi_{r}$ is given in Lemma $\ref{lem:bar7}$. Using Lemma $\ref{lem:bar7}$, $u_{r}(x)=\rho_{R_1}(3r)+g(x)$, $u_{r}\geq g$ in $\mathbb R^n$ and $u_{r}$ is a continuous viscosity supersolution of $(\ref{eq3.5})$ in $\Omega$. 
Then the rest of the proof is similar to Theorem $\ref{thm:supsub1}$.
\end{proof}

\begin{theorem}\label{thm:disvis}
Let $\Omega$ be a bounded domain satisfying the uniform exterior ball condition. Assume that $0<\sigma<2$,  $b_{ab}\equiv 0$ in $\Omega$ if $\sigma<1$ and $c_{ab}\geq \gamma$ in $\Omega$ for some $\gamma>0$. Assume that $g$ is a bounded continuous function in $\mathbb R^n$, $\{K_{ab}(\cdot,z)\}_{a,b,z}$, $\{b_{ab}\}_{a,b}$, $\{c_{ab}\}_{a,b}$, $\{f_{ab}\}_{a,b}$ are sets of uniformly bounded and continuous functions in $\Omega$, uniformly in $a\in\mathcal{A}$, $b\in\mathcal{B}$, and $\{K_{ab}(x,\cdot);x\in\Omega, a\in\mathcal{A},b\in\mathcal{B}\}$ are kernels satisfying {\rm(H0)}-{\rm(H2)}, ($\overline{{\rm H3}}$). Then $(\ref{eq:belisa})$ admits a discontinuous viscosity solution $u$.
\end{theorem}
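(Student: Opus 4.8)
The plan is to obtain the discontinuous viscosity solution of $(\ref{eq:belisa})$ by feeding the continuous sub/supersolutions constructed in Theorem \ref{thm:supsub2} into the Perron construction of Corollary \ref{col:biper}, in exactly the way Theorem \ref{mainthm1} was deduced in the uniformly elliptic case --- the only difference being that here there is no H\"older estimate available, so the output remains merely a discontinuous viscosity solution (and, as noted in the introduction, a genuine viscosity solution once a comparison principle for $(\ref{eq:belisa})$ is available).

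First I would check that the hypotheses of Theorem \ref{thm:disvis} subsume those of both ingredients. From $c_{ab}\geq\gamma>0$ in $\Omega$ we read off $c_{ab}\geq 0$; the families $\{K_{ab}(\cdot,z)\}$, $\{b_{ab}\}$, $\{c_{ab}\}$, $\{f_{ab}\}$ are uniformly bounded and continuous in $\Omega$, uniformly in $a\in\mathcal{A}$, $b\in\mathcal{B}$; the kernels satisfy {\rm (H0)}--{\rm (H2)} together with $(\overline{{\rm H3}})$; and $b_{ab}\equiv 0$ in $\Omega$ when $\sigma<1$. This is precisely what Theorem \ref{thm:supsub2} requires, while Corollary \ref{col:biper} needs only {\rm (H0)}--{\rm (H2)} for the kernels, $c_{ab}\geq 0$, $b_{ab}\equiv 0$ for $\sigma<1$, and uniform continuity of the coefficients, all of which are in force.

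Next I would apply Theorem \ref{thm:supsub2} to produce a bounded continuous viscosity subsolution $\underbar u$ and a bounded continuous viscosity supersolution $\bar u$ of $(\ref{eq:belisa})$ with $\underbar u=\bar u=g$ in $\Omega^c$. Inspecting the construction (as in Theorem \ref{thm:supsub1}, $\bar u$ is an infimum of barrier functions each lying above $g$ on all of $\mathbb R^n$, and symmetrically $\underbar u$ is a supremum of barriers each lying below $g$), one sees that $\underbar u\leq g\leq\bar u$ in $\mathbb R^n$; in particular $\underbar u\leq\bar u$ in $\mathbb R^n$, which is the last remaining hypothesis of Corollary \ref{col:biper}. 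Since being a viscosity sub/supersolution of $(\ref{eq:belisa})$ includes being one of $(\ref{eq3.5})$ in $\Omega$, the pair $(\underbar u,\bar u)$ is admissible there as well.

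Finally I would invoke Corollary \ref{col:biper} with this pair: the function $w(x)=\sup_{u\in\mathcal F}u(x)$, where $\mathcal F=\{u\in C^0(\mathbb R^n):\underbar u\leq u\leq\bar u\ \text{in}\ \mathbb R^n\ \text{and}\ u\ \text{is a viscosity subsolution of}\ (\ref{eq3.5})\}$, is a discontinuous viscosity solution of $(\ref{eq:belisa})$, which is the assertion. No step of this assembly is itself hard; the real obstacles were already resolved earlier --- in Lemmas \ref{lem:bar5}--\ref{lem:bar7} and Theorem \ref{thm:supsub2}, where the weak degeneracy assumption $(\overline{{\rm H3}})$ forces one to build barriers adapted to the outward normal direction at $\partial\Omega$ rather than to an entire annulus and to absorb the drift terms $b_{ab}$, and in Theorem \ref{thm:per} and Corollary \ref{col:biper}, where Perron's method had to be carried out in the absence of a comparison principle.
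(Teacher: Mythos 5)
Your proposal is correct and follows exactly the paper's route: the paper's proof is the one-line deduction from Corollary \ref{col:biper} (Perron's method for $(\ref{eq:belisa})$) combined with Theorem \ref{thm:supsub2} (the continuous sub/supersolutions matching $g$ on $\Omega^c$). Your verification that the hypotheses of both ingredients are subsumed, and that $\underbar u\leq\bar u$, is exactly the routine checking the paper leaves implicit.
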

\begin{proof}
The result follows from Corollary \ref{col:biper} and Theorem $\ref{thm:supsub2}$.
\end{proof}

\textbf{Acknowledgement.}  We would like to thank the referee for valuable comments which improved the paper.


\begin{thebibliography}{99}

\bibitem{AT} O. Alvarez and A. Tourin, Viscosity solutions of nonlinear integro-differential equations, {\it  Ann. Inst. H. Poincar\'e Anal. Non Lin\'eaire}, \textbf{13} (1996), no. 3, 293--317.

\bibitem{BBP10} G. Barles, R. Buckdahn and E. Pardoux, Backward stochastic differential equations and integral-partial differential equations, {\it  Stochastics Stochastics Rep.}, \textbf{60} (1997), no. 1-2, 57--83.

\bibitem{BCI0} G. Barles, E. Chasseigne and C. Imbert, On the Dirichlet problem for second-order elliptic integro-differential equations, {\it Indiana Univ. Math. J.} \text{57} (2008), no. 1, 213--246.

\bibitem{BI} G. Barles and C. Imbert, Second-order elliptic integro-differential equations: Viscosity solutions' theory revisited, {\it Ann. Inst. H. Poincar\'e Anal. Non Lin\'eaire} \textbf{25} (2008), no. 3, 567--585. 

\bibitem{B10} I. H. Biswas, On zero-sum stochastic differential games with jump-diffusion driven state: a viscosity solution framework, {\it SIAM J. Control Optim.} \textbf{50} (2012), no. 4, 1823--1858.

\bibitem{B11} I. H. Biswas, E. R. Jakobsen and K. H. Karlsen, Viscosity solutions for a system of integro-PDEs and connections to optimal switching and control of jump-diffusion processes, {\it Appl. Math. Optim.} \textbf{62} (2010), no. 1, 47--80.

\bibitem{BHL10} R. Buckdahn, Y. Hu and J. Li, Stochastic representation for solutions of Isaacs' type integral-partial differential equations, {\it  Stochastic Process. Appl.}, \textbf{121} (2011), no. 12, 2715--2750.


\bibitem{LL1} L. A. Caffarelli and L. Silvestre, Regularity theory for fully nonlinear integro-differential equations, {\it Comm. Pure Appl. Math.} \textbf{62} (2009), no. 5, 597--638.

\bibitem{LL2} L. A. Caffarelli and L. Silvestre, Regularity results for nonlocal equations by approximation, 
{\it Arch. Ration. Mech. Anal.} \textbf{200} (2011), no. 1, 59--88.

\bibitem{LL3} L. A. Caffarelli and L. Silvestre, The Evans-Krylov theorem for nonlocal fully nonlinear equations, {\it Ann. of Math. (2)} \textbf{174} (2011), no. 2, 1163--1187.

\bibitem{CD1} H. Chang Lara and G. D\'{a}vila, Regularity for solutions of nonlocal parabolic equations, {\it Calc. Var. Partial Differential Equations} \textbf{49} (2014), no. 1-2, 139--172.

\bibitem{CD2} H. Chang Lara and G. D\'{a}vila, Regularity for solutions of nonlocal parabolic equations II,
{\it J. Differential Equations} \textbf{256} (2014), no. 1, 130--156.

\bibitem{CD4} H. Chang Lara and G. D\'{a}vila, H\"older estimates for nonlocal parabolic equations with critical drift, {\it J. Differential Equations} \textbf{260} (2016), no. 5, 4237--4284.

\bibitem{CD3} H. Chang Lara and G. D\'{a}vila, $C^{\sigma+\alpha}$ estimates for concave, nonlocal parabolic equations with critial drift, {\it J. Integral Equations Applications} \textbf{28} (2016), no.3, 373--394.

\bibitem{CK} H. Chang Lara and D. Kriventsov, Further time regularity for non-local, fully non-linear parabolic equations, to appear in {\it Comm. Pure Appl. Math.}.

\bibitem{CIL} M. G. Crandall, H. Ishii and P. L. Lions, User's guide to viscosity solutions of second order partial differetial equations, {\it Bull. Amer. Math. Soc.} \textbf{27} (1992), no. 1, 1--67

\bibitem{DK} H. Dong and D. Kim, Schauder estimates for a class of non-local elliptic equations, {\it Discrete Contin. Dyn. Syst.} \textbf{33} (2013), no. 6, 2319--2347.

\bibitem{DZ} H. Dong and H. Zhang, On Schauder estimates for a class of nonlocal fully nonlinear parabolic equations, preprint (2016), arXiv:1604.00101.

\bibitem{ns} N. Guillen and R. Schwab, Min-max formulas for nonlocal elliptic operators, preprint (2016), arXiv:1606.08417.

\bibitem{Is} H. Ishii, Perron's method for Hamilton-Jacobi equations, {\it Duke Math. J.} 55 (1987), no. 2, 369--384.

\bibitem{Is1} H. Ishii, On uniqueness and existence of viscosity solutions of fully nonlinear second-order elliptic PDEs, {\it Comm. Pure Appl. Math.} \textbf{42} (1989), no. 1, 15--45. 

\bibitem{IL}  H. Ishii and P. L. Lions, Viscosity solutions of fully nonlinear second-order elliptic 
partial differential equations, {\it J. Differential Equations} \textbf{83} (1990), no. 1, 26--78.

\bibitem{I10} Y. Ishikawa, Optimal control problem associated with jump processes, {\it Appl. Math. Optim.} \textbf{50} (2004), no. 1, 21--65.

\bibitem{jk1} E. R. Jakobsen and K. H. Karlsen,  A ``maximum principle for semicontinuous functions" applicable 
to integro-partial differential equations, {\it NoDEA Nonlinear Differential Equations Appl.} \textbf{13} (2006), no. 2, 137--165.

\bibitem{TJ1} T. Jin and J. Xiong, Schauder estimates for solutions of linear parabolic integro-differential equations, {\it Discrete Contin. Dyn. Syst.} \textbf{35} (2015), no. 12, 5977--5998.

\bibitem{TJ} T. Jin and J. Xiong, Schauder estimates for nonlocal fully nonlinear equations, {\it Ann. Inst. H. Poincar\'e Anal. Non Lin\'eaire} \textbf{33} (2016), no. 5, 1375--1407.

\bibitem{KRS} M. Kassmann, M. Rang and R. Schwab, Integro-differential equations with nonlinear directional dependence, {\it Indiana Univ, Math. J.} \textbf{63} (2014), no. 5, 1467--1498.

\bibitem{KP10} I. Kharroubi and H. Pham, Feynman-Kac representation for Hamilton-Jacobi-Bellman IPDE, {\it Ann. Probab.} \textbf{43} (2015), no. 4, 1823--1865.

\bibitem{Ko} S. Koike, Perron's method for $L^p$-viscosity solutions, {\it Saitama Math. J.} \textbf{23} (2005), 9--28.

\bibitem{KS10} S. Koike and A. \Swiech, Representation formulas for solutions of Isaac integro-PDE, {\it Indiana Univ, Math. J.} \textbf{62} (2013), no. 5, 1473--1502.

\bibitem{Kri} D. Kriventsov, $C^{1,\alpha}$ interior regularity for nonlinear nonlocal elliptic equations with rough kernels, {\it Comm. Partial Differential Equations} \textbf{38} (2013), no. 12, 2081--2106.

\bibitem{M} C. Mou, Semiconcavity of viscosity solutions for a class of degenerate elliptic integro-differential equations in $\mathbb R^n$, {\it Indiana Univ, Math. J.} \textbf{65} (2016) no. 6, 1891--1920. 

\bibitem{MS} C. Mou and A. \Swiech, Uniqueness of viscosity solutions for a class of integro-differential equations, {\it NoDEA Nonlinear Differential Equations Appl.} \textbf{22} (2015), no. 6, 1851--1882. 

\bibitem{OS10} B. Oksendal and A. Sulem, Applied stochastic control of jump diffusions, Second edition, {\it Universitext}, Springer, Berlin, 2007. 

\bibitem{P10} H. Pham, Optimal stopping of controlled jump diffusion processes: a viscosity solution approach, {\it J. Math. Systems Estim. Control} \textbf{8} (1998), no. 1, 27pp.

\bibitem{XJ1} X. Ros-Oton and J. Serra, Boundary regularity for fully nonlinear integro-differential equations, {\it Duke Math. J.} \textbf{165} (2016), no. 11, 2079--2154.

\bibitem{SS} R. Schwab and L. Silvestre, Regularity for parabolic integro-differential equations with very irregular kernels, {\it Anal. PDE} \textbf{9} (2016), no. 3, 727--772.

\bibitem{Se1} J. Serra, Regularity for fully nonlinear nonlocal parabolic equations with rough kernels, {\it Calc. Var. Partial Differential Equations} 54 (2015), no. 1, 615--629.

\bibitem{Se} J. Serra, $C^{\sigma+\alpha}$ regularity for concave nonlocal fully nonlinear elliptic equations with rough kernels, {\it Calc. Var. Partial Differential Equations} 54 (2015), no. 4, 3571--3601.

\bibitem{L1} L. Silvestre, On the differentiability of the solution to the Hamilton-Jacobi equation with critical fractional diffusion, {\it Adv. Math.} \textbf{226} (2011), no. 2, 2020--2039.

\bibitem{L2} L. Silvestre, A new regularization mechanism for the Boltzmann equation without cut-off, {\it Comm. Math. Phys.} \textbf{348} (2016), no.1, 69--100.

\bibitem{L3} L. Silvestre, H\"older estimates for solutions of integro-differential equations like the fractional Laplace, {\it Indiana Univ, Math. J.} \textbf{55} (2006) no. 3, 1155--1174. 

\bibitem{S10} H. M. Soner, Optimal control with state-space constraint. II, {\it SIAM J. Control Optim.} \textbf{24} (1986), no. 6, 1110--1122.

\bibitem{S11} H. M. Soner, Optimal control of jump-Markov processes and viscosity solutions, in Stochastic differential systems, stochastic control theory and applications (Minneapolis, Minn., 1986) 501--511, {\it IMA Vol. Math. Appl.}, \textbf{10}, Springer, New York, 1988.

\bibitem{SZ10} A. \Swiech\, and J. Zabczyk, Integro-PDE in Hilbert spaces: Existence of viscosity solutions, {\it Potential Anal.} \textbf{45} (2016), no. 4, 703--736.
\end{thebibliography}
\end{document}